\numberwithin{equation}{section}
\newtheorem{theo}{Theorem}
\newtheorem{coro}{Corollary}
\newtheorem{prop}{Proposition}
\newtheorem{lem}{Lemma}
\theoremstyle{remark}
\newtheorem{Remark}{Remark}
\newcommand{\N}{\mathbb{N}}
\newcommand{\Z}{\mathbb{Z}}
\newcommand{\Q}{\mathbb{Q}}
\newcommand{\R}{\mathbb{R}}
\renewcommand{\C}{\mathbb{C}}
\newcommand{\K}{\mathbb{K}}
\newcommand{\Qbar}{\overline{\mathbb Q}}
\newcommand{\Qb}{\overline{\mathbb Q}}
\newcommand{\etoile}{^\star}
\newcommand{\eps}{\varepsilon}
\newcommand{\Li}{{\rm Li}}
\newcommand{\OK}{{\mathcal O}_{\K}}
 \newcommand{\ddj}{\ddjj_j}
  \newcommand{\ddjj}{\chi}
 \newcommand{\ddjspu}{\chi_{s+1,j}}
\newcommand{\ppP}{\kappa} 
\newcommand{\qqQ}{K} 
\newcommand{\alcra}{\alpha_{p,j}}
\newcommand{\nvP}{\beta}
\newcommand{\nvU}{B}
\newcommand{\peu}{b}
\newcommand{\Lcor}{\mathbb{L}}
\newcommand*{\house}[1]{%
  \mathord{%
    \mathpalette\@house{#1}%
  }%
}
\newcommand*{\@house}[2]{%
  \dimen@=\fontdimen8 %
      \ifx#1\scriptscriptstyle\scriptscriptfont
      \else\ifx#1\scriptstyle\scriptfont
      \else\textfont\fi\fi
      3 %
  \sbox0{%
    $#1%
      \vrule width\dimen@\relax
      \overline{%
        \kern2\dimen@
        \begingroup 
          #2%
        \endgroup
        \kern2\dimen@
      }%
      \vrule width\dimen@\relax
      \mathsurround=1.5\dimen@ 
    $%
  }%
  \ht0=\dimexpr\ht0-\dimen@\relax
  \dp0=\dimexpr\dp0+2\dimen@\relax
  \vbox{%
    \kern\dimen@ 
    \copy0 %
  }%
}
\newcommand{\uuu}{v}
\newcommand{\joliL}{{\mathcal L}}
\newcommand{\chemti}{\widetilde{\joliL}}
\newcommand{\chemhat}{\widehat{\joliL}}
\newcommand{\chemz}{\joliL_0}
\newcommand{\chemu}{\joliL_1}
\newcommand{\chemd}{\joliL_2}
\newcommand{\chemt}{\joliL_3}
\newcommand{\chemq}{\joliL_4}
\newcommand{\chemc}{\joliL_5}
\newcommand{\Span}{{\rm Span}}
\newcommand{\grosb}{{\mathcal B}_{S,r,n,j}(\alpha)}
\newcommand{\lz}{\ell_0}
\renewcommand{\Re}{{\rm Re}\, }
\renewcommand{\Im}{{\rm Im}\, }
\begin{document}

 \selectlanguage{english}

\title{Linear independence of values of $G$-functions}
\date\today
\author{S. Fischler and T. Rivoal}
\maketitle

\begin{abstract} Given any non-polynomial $G$-function $F(z)=\sum_{k=0}^\infty A_kz^k$ of radius of convergence $R$, we consider the $G$-functions   
$F_n^{[s]}(z)=\sum_{k=0}^\infty \frac{A_k}{(k+n)^s}z^k$ for any integers $s\ge 0$ and $n\ge 1$. For any fixed algebraic number $\alpha$ such that $0<\vert \alpha\vert<R$ and any number field $\K$ containing $\alpha$ and the $A_k$'s, we define $\Phi_{\alpha, S}$ as the $\K$-vector space generated by the values $F_n^{[s]}(\alpha)$, $n\ge 1$ and $0\le s\le S$. We prove that $u_{\K,F}\log(S)\le \dim_\K(\Phi_{\alpha, S })\le v_FS$  for any $S$, with   effective constants $u_{\K,F}>0$ and $v_F>0$, and that the family $\big(F_n^{[s]}(\alpha)\big)_{1\le n \le v_F, s\ge 0}$ contains infinitely many irrational numbers. This theorem applies in particular when $F$ is an hypergeometric series with rational parameters or a multiple polylogarithm, and it encompasses a previous result by the second author and Marcovecchio in the case of polylogarithms. The proof relies on an explicit construction of Pad\'e-type approximants. It makes use of results of Andr\'e, Chudnovsky and Katz on $G$-operators, of a new linear independence criterion \`a la Nesterenko over number fields, of singularity analysis as well as of the saddle point method.
\end{abstract}

\section{Introduction}

The class of $G$-functions  was defined by Siegel~\cite{siegel} to generalize the Diophantine properties of the logarithmic function, by opposition 
to the exponential function which he generalized with the class of $E$-functions. 
A series $F(z)=\sum_{k=0}^\infty A_k z^k \in \Qbar[[z]]$ is a $G$-function if the following three conditions are met (we  fix an embedding of $\Qbar$ into $\mathbb C$):

1. There exists $C>0$ such that for any $\sigma \in \textup{Gal}(\Qbar/\mathbb Q)$ and any $k\ge 0$, $\vert \sigma(A_k)\vert \le C^{k+1}$.

2. Define $D_n$ as the smallest positive integer such that $D_nA_k$ is an algebraic integer for any $k\le n$. There exists 
$D>0$ such that for any $n\ge 0$, $D_n\le D^{n+1}$.

3. $F(z)$ is a solution of a linear differential equation with coefficients in $\Qbar(z)$.

\noindent The first property implies that the radius of convergence of $F$ is positive. In the second property, the existence of $D$ is enough for the 
purpose of this paper, but we mention that a famous conjecture of Bombieri implies that $D_n$ always divides $c^{n+1}d_{an}^b$ for some integers $a,b\ge 0, c\ge 1$, where 
$d_n:=\textup{lcm}\{1,2,\ldots,n\}=e^{n+o(n)}$ (see~\cite{firiK}). The third property shows that there is a number field containing all the coefficients $A_k$. In the case where they are all rational numbers, 
 the  three conditions become 
$\vert A_k\vert \le C^{k+1}$, $D_n A_k\in \mathbb Z$ for $k\le n$ and $D_n\le D^{n+1}$, and $F(z)$ is in fact a solution of a linear differential equation with coefficients 
in $\mathbb Q(z)$. 

 $G$-functions can be either algebraic over $\Qbar(z)$, like 
\begin{multline}\label{eq:400}
\sum_{k=0}^\infty z^k=\frac1{1-z}, \quad 
\sum_{k=0}^\infty \frac{\binom{2k}{k}}{k+1}z^k=\frac{2}{1+\sqrt{1-4z}}, 
\quad  \sum_{k=0}^\infty \binom{4k}{2k}z^k=\frac{\sqrt{1+\sqrt{1-6z}}}{\sqrt{2-12z}}, \\
\sum_{k=0}^\infty \binom{3k}{2k}z^k= \frac{2\cos\big(\frac13\arcsin(\frac32\sqrt{3z})\big)}{\sqrt{4-27z}}, \quad \sum_{k=0}^\infty \frac{(30k)!k!}{(15k)!(10k)!(6k)!}z^k, \qquad 
\end{multline}
or transcendental over $\Qbar(z)$, like  
\begin{multline}\label{eq:401}
\sum_{k=0}^\infty \frac{z^{k+1}}{k+1}=-\log(1-z), 
\quad \sum_{k=0}^\infty \frac{\binom{2k}{k}}{(k+1)^2}z^{k+1}=1-\sqrt{1-4z}+\log\Big(\frac{1+\sqrt{1-4z}}2\Big), 
\\
\sum_{k=0}^\infty \frac{\binom{2k}{k}}{2k+1}z^{2k+1}=2\arcsin(2z), \quad 
\sum_{k=0}^\infty\frac{z^{2k+2}}{(k+1)^2\binom{2k+2}{k+1}} = 2\arcsin\Big(\frac z2\Big)^2.\qquad 
\end{multline}
Transcendental  $G$-functions also include  the polylogarithms 
$\Li_s(z)=\sum_{k=1}^\infty \frac{z^k}{k^s}$ for   $s\geq 1$. All the above examples are special cases of 
the generalized hypergeometric series with rational parameters, which is a $G$-function: 
\begin{equation}\label{eq:403}
{}_{p+1}F_p \left[ 
\begin{matrix}
a_1, a_2, \ldots,  a_{p+1}
\\
b_1, b_2, \ldots, b_p
\end{matrix}
; z
\right] = \sum_{k=0}^\infty \frac{(a_1)_k(a_2)_k\cdots (a_{p+1})_k}{(1)_k(b_1)_k\cdots (b_p)_k} z^k, 
\end{equation}
where $(\alpha)_0=1$ and $(\alpha)_k=\alpha(\alpha+1)\cdots (\alpha+k-1)$ for $k\ge 1$; we assume that $-b_j \not\in\N=\{0,1,2,\ldots\}$ for any $j$.  Not all $G$-functions are hypergeometric, for instance the algebraic function  $\frac1{\sqrt{1-6z+z^2}}=\sum_{k=0}^\infty \big(\sum_{j=0}^k \binom{k}{j}
\binom{k+j}{j}\big)z^{k}$ or the transcendental functions $\sum_{k=0}^\infty \big(\sum_{j=0}^k \binom{k}{j}^2
\binom{k+j}{j}^2\big)z^{k}$, $\frac12\log(1-z)^2=\sum_{k=1}^\infty (\frac{1}{k}\sum_{j=1}^{k-1}\frac1j)z^k$, and more generally multiple polylogarithms 
$\sum_{n_1>\cdots>n_k\ge 1} \frac{z^{n_1}}{n_1^{s_1}n_2^{s_2}\cdots n_k^{s_k}}$ with $s_1, s_2, \ldots, s_k\in \mathbb Z.$

\medskip

In this paper, we are interested in the Diophantine properties of the values of $G$-functions at algebraic  points. We first recall that there is no definitive  theorem about the irrationality or transcendance of values of $G$-functions, like the Siegel-Shidlovsky Theorem for values of $E$-functions: transcendental  $G$-functions may take rational values or algebraic values at some non-zero algebraic points, see~\cite{arch, beuk93, wolf88} for examples related to Gauss ${}_2F_1$ hypergeometric function. 
Moreover, very few values of classical $G$-functions are known to be irrational: apart from logarithms of algebraic numbers (proved to be transcendental by other methods, namely the Hermite-Lindemann theorem), we may cite Ap\'ery's Theorem~\cite{apery} 
that $\zeta(3)=\Li_3(1)\notin \mathbb Q$, and  the Chudnovsky-Andr\'e Theorem~\cite{andrecrelle} on the algebraic independence over $\Qbar$ of the values ${}_{2}F_1[\frac 12,\frac 12;1;\alpha]$ and ${}_{2}F_1[-\frac12,\frac12;1;\alpha]$ for any $\alpha\in \Qbar$, $0<\vert \alpha\vert <1$~(\footnote{This result was first proved by G. Chudnovsky in the 70's by an indirect method not related to $G$-functions, and it was reproved by Andr\'e in the 90's by a method  designed for certain $G$-functions (simultaneous adelic uniformization), but which has been applied so far only to these ${}_{2}F_1$  functions.}). 

\medskip

Up to now, known results on values of $G$-functions can be divided into two families. The first one gathers theorems on $F(\alpha)$, where $\alpha\in\Qbar\subset\C$ is sufficiently close to 0 in terms of $F$ (and, often, of other parameters including the degree and height of $\alpha$).  
One of the most general results of this family is the following. 
\begin{theo}[Chudnovsky~\cite{chud1, chud2}]\label{theo:chud1} 
 Let $Y(z)={}^t(F_1(z), \ldots, F_S(z))$ be a vector of $G$-functions solution of 
a differential system $Y'(z)=A(z)Y(z)$, where $A(z)\in M_S(\Qb(z))$. Assume that $1, F_1(z)$, \ldots, $F_S(z)$ are $\Qbar(z)$-algebraically independent. 
Then for any integer $d\ge 1$, there exists $C=C(Y,d)>0$ such that, for any algebraic number $\alpha\neq 0$ of degree $d$ with  $\vert \alpha \vert <\exp(-C\log\left(H(\alpha)\right)^{\frac{4S}{4S+1}}),$ there does not exist a polynomial relation of degree $d$ and coefficients in $\mathbb Q(\alpha)$  between the  values $1, F_1(\alpha), \ldots, F_S(\alpha)$. 
\end{theo}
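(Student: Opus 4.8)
The plan is to prove the Chudnovsky theorem (Theorem~\ref{theo:chud1}) by the classical method of constructing auxiliary Pad\'e-type approximants and combining them with Shidlovsky-type lemmas and the $G$-function arithmetic estimates encoded in conditions~1--3. First I would fix the integer $d$ and let $\alpha$ be algebraic of degree $d$. Using the Siegel--Shidlovsky box principle, for each large parameter $N$ I would construct polynomials $P_{i_1,\ldots,i_S}(z)\in\Qbar[z]$, not all zero, of degree $\le N$ in $z$, such that the remainder
\[
R(z)=\sum_{i_1+\cdots+i_S\le d}P_{i_1,\ldots,i_S}(z)\,F_1(z)^{i_1}\cdots F_S(z)^{i_S}
\]
vanishes at $z=0$ to order at least $M$, where $M$ is chosen proportional to $N\cdot\binom{d+S}{S}$ minus a small correction. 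The fact that $Y(z)$ satisfies the system $Y'=AY$ forces the coefficients of $R$ to satisfy linear recurrences with controlled denominators, so that the linear system defining the $P$'s has coefficients in $\K$ of controlled house and denominator; the $G$-function estimates (growth $\le C^{k}$, common denominators $\le D^{k}$) then yield, via Siegel's lemma over number fields, a nonzero solution with $\house{P_{i_1,\ldots,i_S}}\le H_0^{N}$ and denominator $\le \Delta^{N}$ for effective $H_0,\Delta$.

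Next I would produce a full system of linearly independent approximations. The key analytic input is that, because $1,F_1,\ldots,F_S$ are algebraically independent over $\Qbar(z)$, the functional determinant formed from sufficiently many derivatives (or shifts) of a single Pad\'e-type form is not identically zero; equivalently, one uses the Shidlovsky lemma adapted to $G$-operators to guarantee that among $R(z), R^{(1)}(z),\ldots$ one can extract $T:=\binom{d+S}{S}$ forms whose coefficient matrix, evaluated at a generic point, is invertible. Specializing at $z=\alpha$ and estimating: each $R^{(j)}(\alpha)$ is small, of size $\le |\alpha|^{M-j}\cdot(\text{polynomial in }N)\cdot H_0^{N}$, because the Taylor expansion starts at order $M$ and $|\alpha|$ is tiny. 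Meanwhile the linear forms have algebraic coefficients $P^{(j)}_{\cdot}(\alpha)$ of house $\le H_1^{N}$ and denominator $\le \Delta_1^{N}$. If there were a polynomial relation of degree $d$ over $\Q(\alpha)$ among the $F_i(\alpha)$, i.e. a nontrivial $\Q(\alpha)$-linear relation among the monomials $F_1(\alpha)^{i_1}\cdots F_S(\alpha)^{i_S}$, then applying the invertible matrix one would obtain a nonzero algebraic number that is simultaneously (i) a $\Z$-combination of products of bounded house and (ii) bounded above by $\prod_{\text{conjugates}}(\ldots)$; the Liouville inequality (product formula) would be violated once $|\alpha|$ is smaller than $\exp(-C\log H(\alpha)^{4S/(4S+1)})$. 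The exponent $\frac{4S}{4S+1}$ is precisely what comes out of optimizing $N$ against the number $T\asymp N^{?}$ of forms and the mismatch between the $N$-linear size of coefficients and the $M$-order of vanishing; one balances the loss coming from the determinant (a product of $T$ forms, hence house raised to the power $T$) against the gain $|\alpha|^{M}$.

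The main obstacle, and the step requiring the most care, is controlling the \emph{arithmetic of the approximants} and the \emph{non-vanishing of the determinant} simultaneously. Non-vanishing: one must show the Wronskian-type determinant built from the auxiliary forms and their derivatives under the $G$-operator is not the zero function; this is where algebraic independence of $1,F_1,\ldots,F_S$ over $\Qbar(z)$ enters, through a Shidlovsky-style rank argument, but making it effective (bounding the order of vanishing of that determinant at $\alpha$) needs the theory of $G$-operators — boundedness of the generic radius, Andr\'e--Chudnovsky--Katz estimates on denominators of the formal solutions — exactly the tools the abstract advertises. Arithmetically, one must track how differentiating the relation $j\le N$ times inflates denominators by only a geometric factor (again a consequence of the differential system having a $G$-operator), so that the Liouville estimate still closes. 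Once these two ingredients are in place, the Diophantine inequality is a routine optimization, and the admissible range for $|\alpha|$ drops out with the stated exponent depending only on $S$.
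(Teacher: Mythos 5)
The paper does not prove Theorem~\ref{theo:chud1}: it is quoted verbatim from Chudnovsky~\cite{chud1,chud2} as background, with no argument supplied. There is therefore no ``paper's own proof'' to compare against; what follows is an assessment of your sketch on its own merits, relative to the known proof in the literature.

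Your outline correctly identifies the architecture of Chudnovsky's argument: construct Pad\'e-type forms in the monomials $F_1^{i_1}\cdots F_S^{i_S}$ of total degree $\le d$ by Siegel's box principle, use the differential system to keep heights and denominators geometric under differentiation, invoke a Shidlovsky-type zero lemma to extract a full system of linearly independent forms, specialize at $\alpha$, and close with a Liouville estimate. This is indeed the strategy of Bombieri, Galochkin and Chudnovsky. However, as a proof the proposal has two genuine gaps, and they are exactly at the two places where the theorem has content.

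First, the non-vanishing/zero-estimate step is asserted, not proved. Saying ``one uses the Shidlovsky lemma adapted to $G$-operators'' hides the entire point: the classical Shidlovsky lemma gives qualitative non-vanishing of the relevant Wronskian, but to obtain the \emph{quantitative} range $|\alpha|<\exp(-C(\log H(\alpha))^{4S/(4S+1)})$ one needs an \emph{effective} upper bound on the order of vanishing of that Wronskian (equivalently, on the order of the extra zero one must concede when passing from $R$ to a nonsingular $T\times T$ system). Chudnovsky obtains this through his theory of the size of a $G$-operator (boundedness of the generic radius of convergence at almost every finite place), and this is precisely the piece your sketch names but does not supply. Without an explicit bound of the form $\textup{ord}_\alpha(\det)\le c\,N$ with a controlled constant $c$, the Liouville step does not close with any specific exponent.

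Second, the exponent $\tfrac{4S}{4S+1}$ is not derived. You write that it ``drops out'' of balancing $N$ against $T$, but that balance is the quantitative heart of the theorem, and the exponent $\tfrac{4S}{4S+1}$ is sensitive to the exact losses in the zero estimate, the dimension count $T=\binom{d+S}{S}$, and the denominator growth. A correct proof must exhibit the parameter choices (order of vanishing $M$, degree $N$, number of derivatives taken) and show that the Liouville inequality fails exactly in the stated range. Presenting the optimization as routine is not acceptable here because the statement being proved \emph{is} the outcome of that optimization; a sketch that does not perform it has not proved the theorem, only reproduced its shape. In short: the approach is right, the two load-bearing lemmas (effective zero estimate for $G$-operators, and the quantitative balance) are missing, and they cannot be waved through.
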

Here, $H(\alpha)$ is the naive height of $\alpha$, i.e. the maximum of the modulus of the integer coefficients of the (normalized) minimal polynomial of $\alpha$ over $\mathbb Q$. See~\cite{abd} for a general  strategy recently obtained to prove algebraic independence of $G$-functions. 
Chudnovsky's theorem refines the works 
of Bombieri~\cite{bombieri} and Galochkin~\cite{galoshkin}.   
Andr\'e~\cite{andrelivre} generalized Chudnovsky's theorem to the case of an inhomogenous system $Y'(z)=A(z)Y(z)+B(z)$. 
Thus, if we consider the case where $\alpha=a/b \in \mathbb  Q$ and  $d=1$,   the values $1, F_1(\alpha), \ldots, F_S(\alpha)$ are $\mathbb Q$-linearly independent provided $b\ge (c_1\vert a \vert)^{c_2}>0$, for some constants $c_1>0$ and $c_2>1$ depending on the vector $Y$. 
The best value known so far for $c_2$ is  quadratic in $S$; see~\cite{firi2, zud2} for related results. When $(1,F_1(z), \ldots, F_S(z))=(1,\Li_1(z), \ldots, \Li_S(z))$, we refer to~\cite{hataliouville, niki} for the best linear independence results, where $c_2$ is ``only'' linear in $S$.

\medskip

The second family consists in more recent results where $\alpha$ is a fixed algebraic point in the disk of convergence: lower bounds are obtained for the dimension of the vector space generated over a given number field by $F(\alpha)$, where $F$ ranges through a suitable set of $G$-functions. In general, this lower bound is not large enough to imply that all these values $F(\alpha)$ are irrational. In this family, we quote the theorem that 
 infinitely many odd zeta values $\zeta(2n+1)=\Li_{2n+1}(1)$, $n\ge 1$, are irrational  (see \cite{BR, rivoalcras}). Let us also quote the following result, first proved in~\cite{ribordeaux} when $\alpha$ is real.
 
\begin{theo}[Marcovecchio~\cite{marco2}]\label{theo:3} Let $\alpha\in \Qbar$, $0<\vert \alpha\vert <1$. The dimension of the $\mathbb Q(\alpha)$-vector space spanned by $1, \Li_1(\alpha), \ldots, \Li_S(\alpha)$ is larger than $\frac{1+o(1)}{[\mathbb Q(\alpha):\mathbb Q]\log(2e)}\log(S)$ as $S\to +\infty$.
\end{theo}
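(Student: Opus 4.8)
The plan is to reduce to the main theorem of the paper, applied to the $G$-function $F(z)=\Li_1(z)=-\log(1-z)$ (or more symmetrically to $F(z)=\sum_{k\ge 0}z^{k+1}/(k+1)$, which has $A_k=1/(k+1)$). The point is that the associated family $F_n^{[s]}(z)=\sum_{k\ge 0}\frac{1}{(k+n)^s}\,\frac{z^k}{k+1}$, after multiplication by $z^n$ and recombination, recovers the polylogarithms $\Li_j(z)$ for $1\le j\le S+1$ together with $1$. Concretely, for $n=1$ one has $z\cdot F_1^{[s]}(z)=\sum_{k\ge 1}\frac{z^k}{k^{s+1}}=\Li_{s+1}(z)$, so $\Li_1(\alpha),\ldots,\Li_{S+1}(\alpha)$ all lie in $\Phi_{\alpha,S}$ (up to the nonzero factor $\alpha$, which is harmless since $\alpha\in\K=\mathbb Q(\alpha)$); and the constant function $1$ is handled separately, or is already available because $F_n^{[0]}(\alpha)$ includes values like $\sum_k A_k\alpha^k$ that together with the $\Li_j(\alpha)$ span a space containing $1$ once one checks a rank-one adjustment. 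First I would write down this inclusion carefully, checking that the $\K$-span of the $F_n^{[s]}(\alpha)$ contains the $\K$-span of $1,\Li_1(\alpha),\dots,\Li_{S+1}(\alpha)$.

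Next I would invoke the upper bound half of the main theorem, $\dim_\K(\Phi_{\alpha,S})\le v_F S$, in the reverse direction: we want a \emph{lower} bound on $\dim_\K\mathrm{Span}_{\K}(1,\Li_1(\alpha),\dots,\Li_S(\alpha))$, so the main theorem is not directly what is wanted, and instead one should run the \emph{construction} underlying the main theorem (Pad\'e-type approximants to the $\Li_j$) specialized to the polylogarithm case, where the construction is already essentially optimal and classical. In fact the cleanest route is: the lower bound in the main theorem is proved via a Nesterenko-type criterion fed by explicit Pad\'e approximants; for $F=\Li_1$ those approximants are the well-known ones (Nikishin / Rivoal / Marcovecchio), and tracking the arithmetic and analytic estimates gives the constant $\frac{1+o(1)}{[\mathbb Q(\alpha):\mathbb Q]\log(2e)}$ rather than the weaker $u_{\K,F}\log S$. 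So the proof is really: specialize the paper's general construction to $\Li_1$, and observe that in this case the parameters can be chosen so that the denominators grow like $d_N^{\,}\sim e^{N}$ and the numerators/remainders like $(2e)^{\pm N}$, yielding $\log S/\log(2e)$ after dividing by $[\mathbb Q(\alpha):\mathbb Q]$.

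The key steps, in order, are: (i) identify the $\Li_j(\alpha)$ inside $\Phi_{\alpha,S}$ for $F=\Li_1$; (ii) recall the explicit Pad\'e-type approximant construction from the body of the paper, with $F=\Li_1$, degree parameter $n\to\infty$ and a number $r$ of ``near-diagonal'' conditions chosen proportional to $\log S$; (iii) estimate the archimedean size of the linear forms $\ell$ in $1,\Li_1(\alpha),\dots,\Li_S(\alpha)$ and of their coefficients, using the saddle point method / singularity analysis as in the general proof, and estimate the common denominator using $d_n\le e^{n(1+o(1))}$; (iv) feed these into the number-field linear independence criterion \`a la Nesterenko quoted in the paper to get $\dim\ge \frac{1+o(1)}{[\mathbb Q(\alpha):\mathbb Q]\log(2e)}\log S$. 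I expect step (iii) to be the main obstacle: getting the constant $\log(2e)$ sharp requires the optimal choice of the two free parameters (the degree $n$ and the number $r\approx\lambda\log S$ of approximation conditions) and a precise asymptotic evaluation of the relevant contour integral, exactly the place where singularity analysis and the saddle point method enter; everything else is bookkeeping, and in particular the passage from $\mathbb Q$ to a general number field $\mathbb Q(\alpha)$ only costs the factor $[\mathbb Q(\alpha):\mathbb Q]$ through the criterion.
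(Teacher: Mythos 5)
Your high-level plan matches the route the paper itself sketches after Corollary~\ref{coro:1}: specialize the general construction to the polylogarithm case (where the constants $C_1(F)$ and $C_2(F)$ of Proposition~\ref{prop:1} can be taken to be $1$) and feed the resulting linear forms into the number-field Nesterenko criterion of Theorem~\ref{th:nest}. Be aware, though, that the paper does not itself prove Theorem~\ref{theo:3} (it is quoted from Marcovecchio); it only observes that its framework would reprove it, the genuinely new ingredient being Theorem~\ref{th:nest}, which handles oscillating linear forms and therefore non-real $\alpha$.

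The concrete error in your sketch is the asserted scaling $r\approx\lambda\log S$. The paper takes $r=[S/(\log S)^2]$ (Lemma~\ref{lem:7nv} and \S\ref{sec:completion}), and this is essential: the remainder satisfies $\log |T_{S,r,n}(1/\alpha)|^{1/n}\le S-(S-r)\log r+o(S\log S)$, while the archimedean and denominator estimates on the coefficients give $\log b \sim S\log(2e)+r\log r+o(S)$. To get a dimension lower bound of order $\log S/\log(2e)$ one must have simultaneously $r\log r=o(S)$ (so the denominator stays $\sim S\log(2e)$) and $(S-r)\log r\sim S\log S$ (so the numerator grows like $S\log S$), which forces $r$ to be of order $S/(\log S)^{1+\varepsilon}$. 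With $r\asymp\log S$ one only gets $(S-r)\log r\asymp S\log\log S$, and the criterion would then yield a lower bound of order $\log\log S$, far short of the claim. A smaller point: the paper's remark refers to the specialization $p=0$, $a_1=1$, i.e.\ $F(z)=1/(1-z)$, for which $A_k=1$, $F_1^{[s]}(z)=\Li_s(z)$ with no extra factor of $z$, and the identity~\eqref{eqBintro} reduces to the trivial one displayed just after it; your choice $F=\Li_1$ also works but shifts indices, and your formula $z\cdot F_1^{[s]}(z)=\Li_{s+1}(z)$ misreads the definition, which already carries $z^{k+n}$ inside the sum, so $F_1^{[s]}(z)=\Li_{s+1}(z)$ holds without any extra factor.
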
 

It seems that all known results in this second family concern only specific $G$-functions, essentially polylogarithms. This is not the case of our main result,  Theorem \ref{theo:1} below, which is very general. 
 Starting from a $G$-function $F(z)=\sum_{k=0}^\infty A_k z^k$ with radius of convergence $R$, we define for any integers $n\ge 1$ and $s\ge 0$ the $G$-functions 
\begin{equation}\label{eq:405}
F_n^{[s]}(z)=\sum_{k=0}^\infty \frac{A_k}{(k+n)^s}z^{k+n}
\end{equation}
which all have $R$ as radius of convergence.

Let $\K$ be a number field that contains all the Taylor coefficients $A_k$ of $F$. 
For any integer $S$ and any $\alpha\in\K$ such that $0<|\alpha|<R$,  let $\Phi_{\alpha,S}$ denote the $\K$-vector space spanned by the numbers $F_{n}^{[s]}(\alpha)$ 
for $n\geq 1$ and $0\le s\le S$; of course $\Phi_{\alpha,S}$ depends also implicitly on  $F$ and $\K$. We shall obtain lower and upper bounds on 
$\dim_{\K} (\Phi_{\alpha,S})$ but to state them precisely, we need to introduce some notations.

We consider a differential operator $L=\sum_{j=0}^\mu P_j(z)(\frac{d}{dz})^j \in \Qbar[z,\frac{d}{dz}]$ such that $LF(z)=0$ and $L$ is of minimal order for $F$; then $L$ is a $G$-operator and in particular it is fuchsian by a  result of Chudnovsky~\cite{chud1, chud2}. We denote by $\delta$  the degree of $L$ and by $\omega\geq 0$ the multiplicity of 0 as a singularity of $L$, i.e. the order of vanishing of $P_\mu$ at $0$. We have $\delta = \deg(P_\mu)$ because $\infty$ is a regular singularity of $L$.
We let $\ell = \delta-\omega$, and $\lz = \max(\ell, \widehat f_1,\ldots,\widehat f_\eta)$ where 
$\widehat{f}_{1}$, \ldots, $\widehat{f}_{\eta}$ are the integer exponents of $L$ at  $\infty$ (so that $\lz = \ell$ if no exponent at $\infty$ is an integer). We refer to~\cite{hille} for the definitions and properties of these classical notions, and to \cite[\S 3]{andre} for those of $G$-operators.

\begin{theo} \label{theo:1} If $F$ is not a polynomial, then there exists an effective constant $C(F) > 0$ such that for any $\alpha\in \K$, $0< |\alpha| <R$, we have 
\begin{equation}\label{eq:200}
\frac{1+o(1)}{[\K:\Q]C(F)}  \log(S) \le \dim_{\K} (\Phi_{\alpha,S}) \le  \lz S+ \mu. 
\end{equation}
The second inequality holds for all $S\ge 0$ while in the first one, $o(1)$ is for $S\to +\infty$.
\end{theo}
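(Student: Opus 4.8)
The plan is to treat the two inequalities separately. The upper bound $\dim_\K(\Phi_{\alpha,S})\le \lz S+\mu$ should be the easier one: the key is that each $F_n^{[s]}$ is obtained from $F$ by the operation $z\frac{d}{dz}\mapsto$ (shift), and more usefully, the functions $F_n^{[s]}(z)$ for varying $n$ and fixed $s$ are governed by the same differential operator structure as $F$. Concretely, since $LF=0$, one expects the whole collection $\{F_n^{[s]}\}$ to live in a finite-dimensional module over $\Qbar(z)$ built from $F$ and its derivatives, twisted by the $\frac{1}{(k+n)^s}$ factors; counting dimensions through the order $\mu$ of $L$ and the quantity $\lz$ (which controls the ``polynomial part'' coming from $\ell=\delta-\omega$ and the integer exponents at infinity) yields the bound. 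I would make this precise by exhibiting, for each $s$, explicit $\Qbar(z)$-linear relations among the $F_n^{[s]}(z)$ (for $n$ ranging over an interval of length $\lz$, up to finitely many exceptional terms absorbed in the $\mu$), then specialize $z=\alpha$; one must check the denominators appearing do not vanish at $\alpha$, or more robustly, work with a basis and argue by the rank of the relevant Wronskian-type matrix.

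The lower bound $\dim_\K(\Phi_{\alpha,S})\ge \frac{1+o(1)}{[\K:\Q]C(F)}\log S$ is the heart of the matter and the main obstacle. The strategy, following the Nesterenko–Marcovecchio paradigm, is: (i) construct, for a well-chosen integer $r$ (growing slowly, essentially $r\asymp \log S$), explicit Padé-type approximants — that is, polynomials $P_{j}(z)$ of controlled degree such that the remainder $R(z)=\sum_j P_j(z)\,(\text{something})$ has a high order of vanishing and small archimedean size; the abstract announces these are ``Padé-type approximants'' built from $F$ and its $G$-operator; (ii) evaluate at $z=\alpha$ to get a small nonzero linear form $\sum c_n^{[s]} F_n^{[s]}(\alpha)$ in the relevant values, with good control on the height of the coefficients $c_n^{[s]}$ (using the $G$-function estimates: the $C^{k+1}$ growth and the $D_n\le D^{n+1}$ denominators, together with Chudnovsky's theorem that $L$ is a $G$-operator, which bounds the denominators of the power-series expansions of a basis of solutions); (iii) feed these into a linear independence criterion à la Nesterenko over the number field $\K$ — the excerpt says a ``new'' such criterion is proved — which converts the existence of many such small linear forms into a lower bound on the dimension of the span. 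The logarithm in the final bound is exactly what one gets from this kind of criterion when the approximations decay geometrically but the heights also grow geometrically in $r$, and $r$ itself is taken $\asymp\log S$ to keep the ``degree vs. number of unknowns'' balance.

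The analytic input — estimating the remainder $R(\alpha)$ — is where singularity analysis and the saddle point method enter: the remainder is a contour integral (a Nilsson–Gevrey / Mellin–Barnes type integral) whose asymptotics as the parameters grow are obtained by deforming the contour through a saddle point, and the dominant contribution is dictated by the nearest singularity of $F$ (hence ``singularity analysis''). The delicate points I anticipate: first, getting the construction to work for a \emph{general} $G$-function $F$ rather than a specific one like a polylogarithm — this is precisely what forces the use of the $G$-operator $L$ and Katz/André's structural results, since one has no closed form to exploit; second, ensuring the linear form $\sum c_n^{[s]} F_n^{[s]}(\alpha)$ is genuinely nonzero (a nonvanishing/``no accidental relation'' argument, typically done via the differential-Galois irreducibility or by a careful analysis of the order of vanishing of the approximants); and third, making every constant effective, in particular $C(F)$, which requires effective versions of the $G$-operator estimates. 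Once the nonvanishing and the size/height estimates are in hand, the criterion à la Nesterenko delivers \eqref{eq:200} mechanically, and the final claim about infinitely many irrationals among $(F_n^{[s]}(\alpha))_{1\le n\le v_F,\ s\ge 0}$ follows because the lower bound $\to\infty$ forces the span to contain numbers outside $\K$, while the upper bound confines the ``essential'' values to the range $n\le \lz=v_F$.
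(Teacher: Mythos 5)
Your proposal follows the paper's announced strategy at a high level, but two of the concrete choices you make would not yield Theorem 1, and the reason a new linear independence criterion is required is misidentified.

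The parameter $r$ is wrong. You take $r\asymp\log S$. The paper takes $r=\lfloor S/(\log S)^2\rfloor$, so $r$ is nearly of order $S$, not logarithmic in $S$. This is not a cosmetic difference: the final lower bound is the ratio $-\log a_0/\log b$ where $|T_{S,r,n}(1/\alpha)|^{1/n}\to a_0$ with $\log a_0\le -(S-r)\log r+O(S)$ (Lemma 8) and the coefficients satisfy $|p_{j,n}|^{1/n}\le b$ with $\log b = cS + r\log r +o(S\log r)$ (Lemmas 6--7). With $r\asymp\log S$ one gets $-\log a_0\asymp S\log\log S$ and $\log b\asymp S$, hence only a $\log\log S$ lower bound on the dimension. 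To obtain $\log S$ you must push $r$ up to $S/(\log S)^{1+\varepsilon}$ with $\varepsilon>0$; the admissibility constraints in the saddle point analysis are $r=o(S)$ and $Se^{-S/r}=o(1)$, which rule out $\varepsilon=0$ and force $r$ to stay just below order $S$.

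The role of the new Nesterenko criterion is also different from what you describe. You frame it as handling nonvanishing of the linear forms, but the genuine obstruction is that, for general $F$ (and in particular for $\alpha$ non-real), the saddle point asymptotics collect contributions from several singularities $\xi_j$ of $F$ with the same modulus, producing $T_{S,r,n}(1/\alpha)=a^n n^\kappa(\log n)^\lambda\big(\sum_q c_q\zeta_q^n+o(1)\big)$ with $|\zeta_q|=1$. Then $\limsup_n|T_n|^{1/n}=a$ but $\liminf_n|T_n|^{1/n}$ may be $0$, so the classical Nesterenko criterion and T\"opfer's number field version do not apply. The criterion proved in the paper (Theorem 4) is designed exactly for this oscillating regime, replacing the lower bound on $|T_n|^{1/n}$ by a Vandermonde argument on blocks of $T$ consecutive indices. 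Finally, the upper bound argument needs more than $\Qbar(z)$-linear relations: the structural result (Proposition 1) gives an identity $F_n^{[s]}(z)=\sum_{t\le s}\sum_{j\le\lz}\ppP_{j,t,s,n}F_j^{[t]}(z)+\sum_{j<\mu}\qqQ_{j,s,n}(z)\theta^jF(z)$ where the $\ppP_{j,t,s,n}$ are constants in $\K$ (not rational functions) and the $\qqQ_{j,s,n}$ are in $\K[z]$, so there is no issue of denominators vanishing at $\alpha$. Establishing this with geometric control of heights and denominators in $n$ and $s$ — via the Andr\'e--Chudnovsky--Katz theorem applied to the $G$-operator and its associated linear recurrence — is the technical core of both the upper bound and the height estimates feeding the criterion.
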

The upper bound in  \eqref{eq:200} depends only on $F$. The constant $C(F)$ is independent from the number field $\K$, which is assumed to contain  $\alpha$ and all the Taylor coefficients $A_k$ of $F$; its expression involves certain quantities introduced in Proposition~\ref{prop:1} in \S\ref{subsec40}.

\medskip

We have the following corollary, in a case where $\lz=1$. The proof is given in~\S\ref{sec:examples}, together with many examples and other applications of Theorem \ref{theo:1}.
\begin{coro}\label{coro:1} Let us fix some rational numbers $a_1, \ldots, a_{p+1}$ and $b_1, \ldots, b_{p}$ such that
$a_i \not\in\Z\setminus\{1\}$ and $b_j\not\in-\N$ for any $i$, $j$. 
  Then for any 
 $\alpha\in\Qbar$ such that $0<|\alpha|<1$, 
infinitely many of the hypergeometric values 
\begin{equation}\label{eq:404}
\sum_{k=0}^\infty \frac{(a_1)_k(a_2)_k\cdots (a_{p+1})_k}{(1)_k(b_1)_k\cdots (b_p)_k} \frac{\alpha ^k}{(k+1)^s}, 
\quad s\ge 0
\end{equation}
are linearly independent over $\mathbb Q(\alpha)$.
\end{coro}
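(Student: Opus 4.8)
The plan is to deduce Corollary~\ref{coro:1} directly from Theorem~\ref{theo:1} by checking that the relevant hypergeometric $G$-function $F$ has associated invariant $\lz = 1$, and then translating the conclusion of Theorem~\ref{theo:1} into the irrationality-type statement about the values \eqref{eq:404}.

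First I would set $F(z) = {}_{p+1}F_p\!\left[\begin{smallmatrix} a_1,\ldots,a_{p+1}\\ b_1,\ldots,b_p\end{smallmatrix};z\right]$, which is a $G$-function of radius of convergence $R=1$ (it is not a polynomial since $a_i\notin\Z\setminus\{1\}$ and $b_j\notin-\N$ guarantee that none of the Pochhammer factors vanishes, so all Taylor coefficients are nonzero; in the degenerate case where some $a_i=1$ the series is just a lower ${}_pF_{p-1}$, which one handles by induction on $p$ or by a preliminary reduction). The minimal differential operator $L$ for $F$ is (a right divisor of) the classical hypergeometric operator, whose singularities lie at $0$, $1$, $\infty$; the key point is that with the normalization coming from a denominator parameter equal to $1$, the exponents of $L$ at $0$ are $0,1-b_1,\ldots,1-b_p$, so $0$ is an \emph{ordinary} point or at worst a regular singularity with $\omega$ small, and in fact $\ell=\delta-\omega=1$. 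Concretely, the hypergeometric operator of order $p+1$ has $P_\mu(z)$ of degree $p+1$ vanishing to order $p$ at $z=0$ (because one of the lower parameters is $1$), so $\delta-\omega=1$; one must also verify that the integer exponents at $\infty$ (among $-a_1,\ldots,-a_{p+1}$) do not exceed $1$ after accounting for the shift, which follows from $a_i\notin\Z\setminus\{1\}$ — the only possibly-integer exponent at $\infty$ is $\le 1$ — giving $\lz=\max(\ell,\widehat f_1,\ldots,\widehat f_\eta)=1$. This computation of $\lz$ for the hypergeometric operator is the main technical obstacle, and it is carried out carefully in \S\ref{sec:examples}; the point of the hypotheses $a_i\notin\Z\setminus\{1\}$, $b_j\notin-\N$ is precisely to force $\lz=1$.

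With $\lz=1$ in hand, Theorem~\ref{theo:1} applied to $F$ over the number field $\K=\Q(\alpha)$ (enlarged if necessary to contain all $A_k$, which are rational here, so $\K=\Q(\alpha)$ suffices) gives, for any fixed $\alpha\in\Qbar$ with $0<|\alpha|<1$,
\[
\dim_{\Q(\alpha)}\Phi_{\alpha,S}\ \le\ S+\mu\ \le\ S+p+1,
\]
while the left inequality shows $\dim_{\Q(\alpha)}\Phi_{\alpha,S}\to\infty$ as $S\to\infty$. Now $\Phi_{\alpha,S}$ contains in particular all the values $F_1^{[s]}(\alpha)=\sum_{k\ge0}\frac{A_k}{(k+1)^s}\alpha^{k+1}$ for $0\le s\le S$, which up to the harmless factor $\alpha$ (nonzero, in $\Q(\alpha)$) are exactly the numbers in \eqref{eq:404}. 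Since $\dim_{\Q(\alpha)}\Phi_{\alpha,S}\to\infty$, the full family $\big(F_n^{[s]}(\alpha)\big)_{n\ge1,s\ge0}$ cannot be contained in any finite-dimensional $\Q(\alpha)$-vector space; I would then invoke the more precise assertion of Theorem~\ref{theo:1} (and of Proposition~\ref{prop:1}, via the construction behind it) that already the subfamily with bounded $n$ — here $1\le n\le \lz=1$, i.e. just $n=1$ — contains infinitely many $\Q(\alpha)$-linearly independent numbers. That is precisely the statement that infinitely many of the values \eqref{eq:404} are linearly independent over $\Q(\alpha)$.

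The remaining care is bookkeeping: checking that the $o(1)$ in the first inequality of \eqref{eq:200} and the explicit constant $C(F)$ are finite and positive for this $F$ (true since $F$ is a genuine, non-polynomial $G$-function), and dealing with the edge cases where the ${}_{p+1}F_p$ collapses to a lower-order series or is algebraic — in all such cases $F$ remains a non-polynomial $G$-function with $\lz=1$, so Theorem~\ref{theo:1} still applies. I expect the only real work to be the exponent computation for the hypergeometric operator establishing $\lz=1$; everything else is a direct specialization of Theorem~\ref{theo:1}.
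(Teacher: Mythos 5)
Your proposal is correct and follows essentially the same route as the paper: identify the hypergeometric operator $L_h=\theta(\theta+b_1-1)\cdots(\theta+b_p-1)-z(\theta+a_1)\cdots(\theta+a_{p+1})$, compute $\delta=p+2$, $\omega=p+1$, hence $\ell=1$, check that the only possible integer exponent at $\infty$ comes from an $a_i=1$ so that $\lz=1$, and then read off the conclusion from Theorem~\ref{theo:1} together with the structural fact (from Proposition~\ref{prop:1} and the proof in \S\ref{sec:completion}) that $\Phi_{\alpha,S}\subset\Psi_{\alpha,S}$ with $\Psi_{\alpha,S}$ generated by $F_1^{[s]}(\alpha)$ ($1\le s\le S$) and the $\mu$ extra numbers $(\theta^vF)(\alpha)$. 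One small slip: you wrote that the exponents of $L_h$ at $\infty$ are $-a_1,\ldots,-a_{p+1}$, whereas they are $a_1,\ldots,a_{p+1}$ (this is what the paper uses, and it is consistent with Lemma~\ref{lemBV}); the correction does not change the conclusion, since under $a_i\notin\Z\setminus\{1\}$ the only possible integer exponent is $1$, giving $\lz=\max(\ell,1)=1$ as you claimed.
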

The numbers in \eqref{eq:404} are hypergeometric because they are equal to 
$$
{}_{p+s+1}F_{p+s} \left[ 
\begin{matrix}
a_1, a_2, \ldots,  a_{p+1}, 1,  \ldots, 1
\\
b_1, b_2, \ldots, b_p, 2, \ldots, 2
\end{matrix}
; \alpha
\right]
$$ 
where $1$ and $2$ are both repeated $s$ times. It seems to be the first general Diophantine result of this type for values of hypergeometric functions. Of course the conclusion of Corollary~\ref{coro:1} can be stated more precisely as 
$$\dim_{\Q(\alpha)}\Span_{\Q(\alpha)} \Big\{ \sum_{k=0}^\infty \frac{(a_1)_k(a_2)_k\cdots (a_{p+1})_k}{(1)_k(b_1)_k\cdots (b_p)_k} \frac{\alpha ^k}{(k+1)^s}, \; 0\leq s\leq S\Big\}\geq 
 \frac{1+o(1)}{[\Q(\alpha):\Q]C}\log(S), $$
 where $C>0$ depends on $a_1, \ldots, a_{p+1}$ and $b_1, \ldots, b_{p}$. The special case $p=0$, $a_1=1$ corresponds to Theorem \ref{theo:3} stated above, except that $\log(2e)$ is replaced with $C$. An {\em ad hoc} analysis in this special case would give $C=\log(2e)$, thereby providing Theorem \ref{theo:3} again (with a new proof, see below).

\medskip

The strategy to prove Theorem \ref{theo:1} is as follows. First, we construct 
certain algebraic   numbers  $\ppP_{j,t,s,n}\in \K$  and polynomials $\qqQ_{j,s,n}(z) \in \K[z]$  such that  for any $s,n\geq 1$:
\begin{equation}\label{eqBintro}
F_n^{[s]}(z)=\sum_{t=1}^{s}\sum_{j=1}^{\lz} \ppP_{j,t,s,n}F_j^{[t]}(z) + \sum_{j=0}^{\mu-1} \qqQ_{j,s,n}(z)\Big( z \frac{d}{d z}\Big)^j F (z),
\end{equation}
with geometric bounds on denominators and moduli of Galois conjugates (see Proposition~\ref{prop:1} in \S \ref{subsec40} for a precise statement). 
Eq.~\eqref{eqBintro} is a far reaching generalization of a property trivially satisfied by polylogarithms: for any $n\ge 1$, 
$$
\sum_{k=0}^\infty \frac{z^{k+n}}{(k+n)^{s}} = \Li_{s}(z) - 
\sum_{k=1}^{n-1} \frac{z^{k}}{k^{s}}.
$$
To obtain this result we study linear recurrences associated with $G$-operators, and make use in a crucial way of the results of Andr\'e, Chudnovsky and Katz~\cite{andre, firi}. With $z= \alpha$, \eqref{eqBintro} proves the inequality on the right-hand side of \eqref{eq:200}. This part of the proof of Theorem~\ref{theo:1} uses only methods with an algebraic flavor.

To prove the inequality on the left-hand side of \eqref{eq:200}, we use methods with a more Diophantine flavor. We consider the series
$$
T_{S,r,n}(z) = n!^{S-r}\sum_{k=0}^\infty \frac{k(k-1)\cdots (k-rn+1)}{(k+1)^S(k+2)^S\cdots (k+n+1)^S} \,A_k \,z^{-k}
$$
where $\vert z\vert >1/R$, $r$ and $n$ are integer  parameters such that $r\leq S $ and $n\to +\infty$. If $A_k=1$ for any $k$, this is essentially the series used in \cite{ribordeaux} and \cite{marco2} to prove Theorem \ref{theo:3}. Using \eqref{eqBintro} again, we prove that $T_{S,r,n}(1/\alpha ) $ is a $\K$-linear combination of the numbers $F_j^{[t]}(\alpha)$ ($1\leq t \leq S$,  $1 \leq j \leq \lz$) and $  ( z \frac{d}{d z})^j F (\alpha)$ ($0\leq j \leq \mu-1$). In fact, the series $T_{S,r,n}(z)$ can  be interpreted has an explicit Pad\'e-type approximant at $z=\infty$ for the functions $F_{j}^{[t]}(1/z)$ and $ ( z \frac{d}{d z})^j F (1/z)$.

We apply singularity analysis  and the saddle point method to prove that 
\begin{equation}\label{eqZintro}
T_{S,r,n}(1/\alpha ) = a^n n^\kappa \log(n)^\lambda \Big( \sum_{q=1}^Q c_q \zeta_q^n +o(1)\Big) \mbox{ as } n\to\infty,
\end{equation}
for some integers $Q\geq 1$ and $\lambda\geq 0$, real numbers $a>0$ and $\kappa$, non-zero complex numbers $c_1$,\ldots, $c_Q$ and pairwise distinct complex numbers $\zeta_1$, \ldots, $\zeta_Q$ such that $|\zeta_q|=1$ for any $q$. These parameters are effectively computed in terms of the finite singularities of $F$.

To conclude the proof we apply a linear independence criterion, as for all results of the second family mentioned above. Such a criterion enables one to deduce a lower bound on the dimension of the $\K$-vector space spanned by complex numbers $\vartheta_1$, \ldots,  $\vartheta_J$ from the existence of linear forms $T_n = \sum_{j=1}^J p_{j,n} \vartheta_j$ with coefficients $p_{j,n}\in\OK$. This lower bound is non-trivial if $|T_n|$
 is very small, and $p_{j,n}$ is not too large. However one more assumption is needed. In Siegel-type criteria this assumption is the non-vanishing of a determinant; Theorem \ref{theo:3} is proved in this way in \cite{marco2}, by constructing several sequences $(T_n^{(k)})$. On the opposite, Nesterenko's criterion~\cite{nest} (and its generalizations \cite{Topfer, Bedulev} to number fields) enables one to construct only one sequence $(T_n)$, but it requires a lower bound on $|T_n|^{1/n}$; this is how Theorem \ref{theo:3} is proved in~\cite{ribordeaux} if $\alpha$ is real. If $\liminf_n  |T_n|^{1/n}$ is smaller than $\limsup_n | T_n|^{1/n}$, this lower bound is weaker. In fact, in our situation, namely with the asymptotics \eqref{eqZintro}, it is not even clear that $\liminf_n |  T_n|^{1/n} $ is positive so that these criteria do not apply. 
 We solve this problem by generalizing Nesterenko's criterion (over any number field) to linear forms $(T_n)$ with asymptotics given by   \eqref{eqZintro}; our lower bound is best possible (see \S \ref{sec:critere} for precise statements). In the special case of polylogarithms, this provides a new proof of Theorem \ref{theo:3} when $\alpha$ is not real.
 
 \bigskip
 
 The structure of this paper is as follows. In \S \ref{sec:examples} we deduce Corollary \ref{coro:1} from  Theorem \ref{theo:1}, and give applications of these results. In \S \ref{sec:critere} we state and prove the generalization of  Nesterenko's linear independence criterion to linear forms with asymptotics given by   \eqref{eqZintro}. Then in \S \ref{seclinrec} we prove a general result, of independent interest, on linear recurrences related to  $G$-operators (using in a crucial way the  Andr\'e-Chudnovsky-Katz theorem). This result allows us to prove \eqref{eqBintro} in \S \ref{secpreuveprinc}, with geometric bounds on denominators and moduli of Galois conjugates. We conclude the proof of  Theorem \ref{theo:1} in \S  \ref{secpreuvethun}, except for the asymptotic estimate  \eqref{eqZintro} that we obtain in \S \ref{sec:asympT} using singularity analysis and the saddle point method. At last, we mention in \S \ref{secrempos} how to simplify the proof in  the special case where $A_k\geq 0$ for any $k$, and $\alpha>0$.

\section{Examples}\label{sec:examples}

 The generalized hypergeometric series defined by \eqref{eq:403}, if $b_j\not\in-\N$ for any $j$, 
 is solution of the differential equation $L_hy(z)=0$ where
$$
L_h=\theta(\theta+b_1-1)\cdots (\theta+b_p-1)-z(\theta+a_1)\cdots (\theta+a_{p+1}), \quad \theta=z\frac{d}{dz}.
$$
It is a $G$-function if and only if the $a_j$'s and $b_j$'s are rational numbers, in which case $L_h$ is a $G$-operator. Assuming $a_i\not\in-\N$, it is not a polynomial. We now compute the quantities defined before Theorem \ref{theo:1}, especially $\lz$. 
The degree $\delta$ of $L_h$ is $p+2$ and the multiplicity $\omega$ of $0$ as a singularity of $L_h$ is $p+1$. Hence, $\ell=\delta-\omega=1$ (consistently with the expression of $L_h$ and Lemma \ref{lemBV} below). Moreover, the exponents of $L_h$ at $0$ are $0, 1-b_1, \ldots, 1-b_p$, while those at $\infty$ are $a_1, \ldots, a_{p+1}$, so that  $
\lz = \max(1,  \widehat{a}_1, \ldots, \widehat{a}_{\eta})
$
where the $\widehat{a}_j$  are the integer parameters amongst $a_1$, \ldots, $a_{p+1}$. 
If none of the $a_j$'s is an  integer greater than 1  then $\lz=1$. This proves Corollary~\ref{coro:1}. 

We now list the hypergeometric parameters of the  examples stated in the Introduction:
$$
\frac{1}{k+1} \longleftrightarrow \left[ 
\begin{matrix} 1,  1\\2   
\end{matrix}
\right]
\quad  \quad 
\frac{\binom{2k}{k}}{k+1} \longleftrightarrow \left[ 
\begin{matrix} \frac12, 1\\ 2
\end{matrix}
\right]
\quad\quad   
\binom{3k}{2k} \longleftrightarrow \left[ 
\begin{matrix} \frac13, \frac23\\ \frac12
\end{matrix}
\right]
$$
$$
\binom{4k}{2k} \longleftrightarrow\left[ 
\begin{matrix} \frac14, \frac34\\ \frac12
\end{matrix}
\right]
\quad\quad
\frac{\binom{2k}{k}}{(k+1)^2} \longleftrightarrow \left[ 
\begin{matrix} \frac12, 1, 1\\ 2, 2
\end{matrix}
\right]
\quad\quad 
\frac1{(k+1)^2\binom{2k+2}{k+1}} \longleftrightarrow \left[ 
\begin{matrix} 1, 1, 1\\
\frac32, 2
\end{matrix}
\right]
$$
$$\frac{\binom{2k}{k}}{2k+1} \longleftrightarrow \left[ 
\begin{matrix} \frac12, \frac12 \\ \frac32
\end{matrix}
\right]\quad \quad 
\frac{(30k)!k!}{(15k)!(10k)!(6k)!} \longleftrightarrow \left[ 
\begin{matrix} \frac{1}{30}, \frac{7}{30}, \frac{11}{30}, \frac{13}{30}, \frac{17}{30}, \frac{19}{30}, \frac{23}{30}, \frac{29}{30}
\\ \frac{1}{5}, \frac{1}{3}, \frac{2}{5}, \frac{1}{2}, \frac{3}{5}, \frac{2}{3}, \frac{4}{5}
\end{matrix}
\right].
$$
In these eight cases, we have $\ell_0=1$ so that  Corollary~\ref{coro:1} applies (separately)  to them.

\medskip 

Let us now compute $\lz$ for four 
non-hypergeometric examples.  The function $\frac{1}{\sqrt{1-6z+z^2}}=\sum_{k=0}^\infty (\sum_{j=0}^k \binom{k}{j}\binom{k+j}{j})z^k$ is solution of the differential equation
$$
(z^2-6z+1)y'(z)+(z-3)y(z)=0
$$
which is minimal for this function; its exponent at 
$\infty $ is 
$1$. Hence $\lz = \ell =2$ and  Theorem~\ref{theo:1} provides  $ \frac{1+o(1)}{[\K:\Q]C}  \log(S)$ $\K$-linearly independent numbers amongst the 
  numbers 
$$
\sum_{k=0}^\infty \Big(\sum_{j=0}^k \binom{k}{j}\binom{k+j}{j}\Big) \frac{\alpha^k}{(k+1)^s} \quad \textup{and} \quad \sum_{k=0}^\infty \Big(\sum_{j=0}^k \binom{k}{j}\binom{k+j}{j}\Big) \frac{\alpha^k}{(k+2)^s}, \quad 0\leq s \leq S .
$$

The function $\frac12 \log(1-z)^2=\sum_{k=0}^\infty (\frac{1}{k+1}\sum_{j=1}^{k}\frac1j)z^{k+1}$ is solution of 
the differential equation
$$
(z-1)^2y'''(z)+3(z-1)y''(z)+y'(z)=0
$$
which is  minimal for this function;  its exponents 
at 
$\infty$ are $0,0,0$. Hence $\lz = \ell=2$  and  Theorem~\ref{theo:1} applies in the same way  to the numbers 
$$
\sum_{k=1}^\infty \Big(\sum_{j=1}^{k-1}\frac1j\Big) \frac{\alpha^k}{k^{s+1}} \quad \textup{and} \quad \sum_{k=1}^\infty \Big(\sum_{j=1}^{k-1}\frac1j\Big) \frac{\alpha^k}{k(k+1)^s}, \quad s\ge 0.
$$

The generating function of the Ap\'ery  numbers $\sum_{k=0}^\infty (\sum_{j=0}^k \binom{k}{j}^2\binom{k+j}{j}^2)z^k$ is solution of 
the minimal differential equation
$$
z^2(1-34z+z^2)y'''(z) +z(3-153z+6z^2)y''(z)+(1-112z+7z^2)y'(z) + (z-5)y(z)=0.
$$
Its exponents at 
$\infty$ are $1,1,1$. Hence $\lz = \ell=2$  and  Theorem~\ref{theo:1} applies again to the numbers 
$$
\sum_{k=0}^\infty \Big(\sum_{j=0}^k \binom{k}{j}^2\binom{k+j}{j}^2\Big) \frac{\alpha^k}{(k+1)^s} \quad \textup{and} \quad \sum_{k=0}^\infty \Big(\sum_{j=0}^k \binom{k}{j}^2\binom{k+j}{j}^2\Big) \frac{\alpha^k}{(k+2)^s}, \quad s\ge 0.
$$

We conclude this section with the case of the series $G_b(z)=\sum_{k=1}^\infty \frac{\chi(k)}{k^b}z^{k}$ where $b$ is any fixed positive integer 
and $\chi$ is the unique non-principal character mod 4. Since $G_b(z)=\sum_{k=0}^\infty \frac{(-1)^{k}}{(2k+1)^b}z^{2k+1}$, it is a $G$-function. 
Moreover, 
$
\theta \big((1+z^2)\theta^b\big)G_b(z)=0$, which is of minimal order for $G_b(z)$. Hence $\theta \big((1+z^2)\theta^b\big)$ is a $G$-operator: it is such that  
$\mu=b+1$, $\delta=b+3$, $\omega=b+2$, $\ell=1$ and its exponents at infinity 
are $0,0, \ldots, 0,2$, where $0$ is repeated $b$ times. Hence $\lz=2$ and  Theorem~\ref{theo:1} applies to the numbers 
$$
\sum_{k=1}^\infty \frac{\chi(k)}{k^{b+s}}\alpha^k \quad \textup{and} \quad \sum_{k=1}^\infty \frac{\chi(k)}{k^{b}(k+1)^s}\alpha^k,  \quad s\ge 0.
$$
More generally, Theorem~\ref{theo:1} applies to any $G$-function of the form $\sum_{k=1}^\infty \frac{\chi(k)}{A(k)}z^{k}$ where $\chi$ is a  Dirichlet character and $A(X)\in \mathbb Q[X]$ is split over $\mathbb Q$ and such that $A(k)\neq 0$ for any positive integer $k$.

\section{Generalization of  Nesterenko's linear independence criterion}\label{sec:critere}

The following version of Nesterenko's linear independence criterion will be used in the proof of Theorem \ref{theo:1}.

\medskip

Let $\K$ be a number field embedded in $\C$. We let $\Lcor = \R$ if $\K\subset\R$, and $\Lcor = \C$ otherwise. 
   We denote   by $o(1)$ any sequence that tends to 0 as $n\to\infty$.

\begin{theo}\label{th:nest} 
Let $(Q_n)$ be an increasing sequence of positive real numbers, with limit $+\infty$, such that $Q_{n+1} = Q_n^{1+o(1)}$. Let $T\geq 1$, $c_1$,\ldots, $c_T$ be non-zero complex numbers, and $\zeta_1$, \ldots, $\zeta_T$ be pairwise distinct complex numbers such that $|\zeta_t| = 1$ for any $t$.

Consider $N$  numbers $\vartheta_1, \ldots, \vartheta_N\in\Lcor$. Assume that for some $\tau > 0$ there exist $N$ sequences $(p_{j,n})_{n\ge 0}$, $j=1, \ldots, N$, such that for any $j$ and $n$,    $p_{j,n}\in \OK$,  all Galois conjugates of $p_{j,n}$ have modulus less than $Q_n^{1+o(1)} $, 
and  
\begin{equation} \label{eqhypnest}
  \sum_{j=1}^N p_{j,n} \vartheta_j    =  Q_n^{-\tau+o(1)} \Big( \sum_{t=1}^T c_t\zeta_t^n+o(1)\Big).
  \end{equation}
Then 
$$
\dim_\K \textup{Span}_{\K} (\vartheta_1, \ldots, \vartheta_N)    \geq \frac{\tau+1}{[\K:\Q]}.
$$
\end{theo}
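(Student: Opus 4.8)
The plan is to follow the classical Nesterenko strategy, adapted to handle the oscillating factor $\sum_{t=1}^T c_t\zeta_t^n$ which may make $|T_n|^{1/n}$ fluctuate or even make $\liminf$ vanish. Write $T_n = \sum_{j=1}^N p_{j,n}\vartheta_j$ and let $r = \dim_\K \mathrm{Span}_\K(\vartheta_1,\ldots,\vartheta_N)$; we want $r \ge (\tau+1)/[\K:\Q]$. Suppose for contradiction that $r < (\tau+1)/[\K:\Q]$. First I would reduce to a basis: choose a $\K$-basis $\vartheta_{i_1},\ldots,\vartheta_{i_r}$ of the span and rewrite each $T_n$ as a $\K$-linear combination of these $r$ numbers; clearing a fixed denominator, one may assume the new coefficients lie in $\OK$ with Galois conjugates still bounded by $Q_n^{1+o(1)}$, at the cost of harmless constants. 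So without loss of generality $N = r$ and the $\vartheta_j$ are $\K$-linearly independent.

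The heart of the argument is to produce, from the single sequence $(T_n)$, a full system of $N$ linearly independent good linear forms at a suitable index, so that a determinant (Siegel-type) argument applies. The key device is the \emph{shift}: consider the vectors $\mathbf{p}_n = (p_{1,n},\ldots,p_{N,n}) \in \OK^N$ for $n, n+1, \ldots, n+N-1$ (or more, over a window of bounded length). I would argue that among $N$ consecutive (or suitably spaced) indices one can extract $N$ vectors $\mathbf{p}_{n_1},\ldots,\mathbf{p}_{n_N}$ that are $\K$-linearly independent: if not, all $\mathbf{p}_m$ for $m$ in a long window lie in a fixed hyperplane, i.e. there is a nonzero $(\lambda_1,\ldots,\lambda_N)$ with $\sum_j \lambda_j p_{j,m} = 0$ for many $m$; combined with \eqref{eqhypnest} and the linear independence of the $\vartheta_j$, this forces the vectors $\vartheta$ to satisfy a nontrivial relation — here is where one needs the oscillation hypothesis, namely that $\sum_t c_t \zeta_t^n$ is not eventually zero (it isn't, since the $\zeta_t$ are distinct, the $c_t$ nonzero, and $|\zeta_t|=1$, so the sequence does not tend to $0$, by a standard argument on exponential polynomials / Vandermonde). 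Thus one gets a non-vanishing $N\times N$ determinant $\Delta = \det(p_{j,n_k})_{j,k}$ with all $n_k$ in a window of length $O(1)$.

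Now run the determinant estimate. On one hand $\Delta \in \OK\setminus\{0\}$, so $\prod_{\sigma}|\sigma(\Delta)| \ge 1$ (product over embeddings of $\K$), where $|\cdot|$ at the complex places is counted with the right multiplicity; hence $|\Delta| \ge \prod_{\sigma\ne \mathrm{id}}|\sigma(\Delta)|^{-1} \ge Q_n^{-([\K:\Q]-1)N(1+o(1))}$ using the conjugate bounds. On the other hand, expand $\Delta$ along the column of $\vartheta$'s: replacing one column, say the one indexed by a fixed $j_0$, one writes $\Delta \cdot \vartheta_{j_0}$ (or rather a cofactor combination) as a sum of terms, each a product of an $(N-1)\times(N-1)$ minor (of size $Q_n^{(N-1)(1+o(1))}$) times one of the $T_{n_k} = Q_n^{-\tau+o(1)}(\sum_t c_t\zeta_t^{n_k}+o(1))$. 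The window being of bounded length, all $Q_{n_k} = Q_n^{1+o(1)}$ by the hypothesis $Q_{n+1}=Q_n^{1+o(1)}$, so $|\Delta| \le Q_n^{(N-1)(1+o(1)) - \tau + o(1)}$. Comparing the two bounds gives
$$
-([\K:\Q]-1)N(1+o(1)) \le (N-1)(1+o(1)) - \tau + o(1),
$$
hence after rearranging $\tau \le [\K:\Q]N - 1 + o(1)$, i.e. $N \ge (\tau+1)/[\K:\Q] - o(1)$, and since $N$ is an integer, $N \ge \lceil(\tau+1)/[\K:\Q]\rceil$ — contradicting $N = r < (\tau+1)/[\K:\Q]$. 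This yields $\dim_\K \mathrm{Span}_\K(\vartheta_1,\ldots,\vartheta_N) \ge (\tau+1)/[\K:\Q]$.

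The main obstacle I anticipate is the extraction step: guaranteeing $N$ linearly independent coefficient vectors in a \emph{bounded} window of indices, uniformly in $n$. The naive pigeonhole gives independent vectors somewhere, but to keep $Q_{n_k} = Q_n^{1+o(1)}$ one needs the window length to stay $O(1)$ while $n\to\infty$; this is exactly where the hypothesis on the $\zeta_t$ enters, ruling out the degenerate scenario where the $\mathbf p_m$ collapse onto a lower-dimensional subspace along an arithmetic progression killing $\sum_t c_t\zeta_t^m$. One must also be careful that clearing denominators in the basis-reduction step and in the shift does not spoil the size estimates — but these only contribute multiplicative constants, absorbed into the $o(1)$ in the exponent since we always compare logarithms against $\log Q_n \to \infty$. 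A clean way to organize this is to phrase everything in terms of $\log|\cdot|/\log Q_n$ and pass to the limit at the very end.
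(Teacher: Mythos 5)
Your proposal has a genuine gap at precisely the step you flag as the ``main obstacle'': the extraction of $N$ coefficient vectors $\mathbf p_{n_1},\dots,\mathbf p_{n_N}$ that are $\K$-linearly independent with all $n_k$ in a bounded window. The hypothesis that $\sum_t c_t\zeta_t^n$ does not tend to $0$ does \emph{not} rule out the collapse of the $\mathbf p_m$ onto a lower-dimensional subspace over arbitrarily long stretches. Concretely, take $\K=\Q$, $N=2$, $\vartheta_1=1$, $\vartheta_2=\sqrt2$, let $p_{m}/q_{m}$ be the continued-fraction convergents of $\sqrt2$ with $m$ even, and set $p_{1,n}=-p_{f(n)}$, $p_{2,n}=q_{f(n)}$ with $f$ slowly increasing and constant on long windows (say $f(n)=2\lfloor\sqrt n/2\rfloor$). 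Then with $Q_n=q_{f(n)}$ the hypotheses hold with $\tau=1$, $T=1$, $\zeta_1=1$, $c_1>0$; yet over any fixed-length window where $f$ is constant, the vectors $\mathbf p_m$ are all equal, and no nonsingular $2\times2$ determinant can be formed. The intermediate claim in your argument --- that $\sum_j\lambda_j p_{j,m}=0$ for many $m$ ``forces the vectors $\vartheta$ to satisfy a nontrivial relation'' --- is simply false: such a relation constrains the coefficient vectors, not the $\vartheta_j$, and in the example $1,\sqrt2$ remain independent. This is exactly why a Nesterenko-type criterion (one sequence of forms with a lower bound on $|T_n|$) is strictly harder than a Siegel-type one (a full system of independent forms) and cannot be reduced to it by shifting alone; trying to do so is a classical trap.

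The paper's proof is fundamentally different and does not prove the criterion from scratch. It uses the Vandermonde determinant $\Delta_n$ built on $\zeta_1,\dots,\zeta_T$ only to show that in each window of length $T$ there is a shift $\delta_n$ with $\bigl|\sum_t c_t\zeta_t^{n+\delta_n}\bigr|$ bounded below by a fixed constant, so that the \emph{modulus} of the shifted form $T_{n+\delta_n}$ is $Q_n^{-\tau+o(1)}$ from below and above. This re-shifting restores the two-sided hypothesis of T\"opfer's Korollar~2, a Nesterenko-type criterion over number fields which is then applied as a black box. The shift is used to control $|T_n|$, not to manufacture linearly independent coefficient vectors; the delicate work of extracting information from a \emph{single} sequence of forms is delegated entirely to T\"opfer's theorem. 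If you want a self-contained proof rather than a reduction, you would have to reprove something at the depth of Nesterenko's original argument (via geometry of numbers or a careful inductive dimension-reduction), not a bare determinant estimate.
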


Given $0 < \alpha < 1 < \beta$ and $\kappa\in\C$, $\lambda\in\C$, this theorem can be applied when
 all Galois conjugates of $p_{j,n}$ have modulus less than $\beta^{n(1+o(1))}$  and 
\begin{equation} \label{eqA36}
  \sum_{j=1}^N p_{j,n} \vartheta_j    =  \alpha^n n^\kappa (\log n)^\lambda \Big(  \sum_{t=1}^T c_t\zeta_t^n+o(1)\Big);
    \end{equation}
then the conclusion reads
$$
\dim_\K \textup{Span}_{\K} (\vartheta_1, \ldots, \vartheta_N)\ge  \frac{1}{[\K:\Q]} \Big( 1 - \frac{\log(\alpha)}{\log(\beta)}\Big).
$$

\medskip

Nesterenko's original linear independence criterion~\cite{nest} is a general quantitative result, of which Theorem \ref{th:nest} is  a special case if $\K=\Q$, $T=1$, $\zeta_1=\pm 1$. The case where $\K=\Q$, $T=2$, $\zeta_2 = \overline{\zeta_1}$ and $c_2 = \overline{c_1}$ follows using either lower bounds for linear forms in logarithms (if $c_1,\zeta_1\in\Qbar$, see  \cite{SorokinZR} or \cite[\S 2.2]{SFoscillate}) or Kronecker-Weyl's equidistribution theorem \cite{SFoscillate}. 

Nesterenko's   criterion has been extended to any number field $\K$ by T\"opfer \cite{Topfer} and Bedulev \cite{Bedulev}; their results are similar, but different in several aspects. The case $T=1$ of Theorem \ref{th:nest} follows from 
T\"opfer's Korollar 2 \cite{Topfer}, but does not seem to follow directly from Bedulev's result since he uses the exponential Weil height relative to $\K$ instead of the house of $p_{j,n}$ (i.e., the maximum of the moduli of all Galois conjugates of $p_{j,n}$). 

  We shall deduce the general case of Theorem \ref{th:nest} from T\"opfer's result using Vandermonde determinants (as in the proof of \cite[Lemma 6]{firi}). This provides also a new and simpler proof of the above-mentioned case  $\K=\Q$, $T=2$, $\zeta_2 = \overline{\zeta_1}$ and $c_2 = 
\overline{c_1}$.

\medskip

Even in the special case where $T=1$ and $\K=\Q$, the lower bound in Theorem \ref{th:nest}  is best possible (see \cite{FHKL}). 
We have the following corollary, which we shall not use in this paper but which can be useful in other contexts.
\begin{coro} \label{coronest}  Let $\alpha,\beta\in\R$ be such that $0 < \alpha < 1 < \beta$. Consider $N$  numbers $\vartheta_1, \ldots, \vartheta_N\in\Lcor$. Assume that there exist $N$ sequences $(p_{j,n})_{n\ge 0}$, $j=1, \ldots, N$, such that for any $j$ and $n$,    $p_{j,n}\in \OK$,  all Galois conjugates of $p_{j,n}$ have modulus less than $\beta^{n(1+o(1))}$, and 
$$\limsup_{n\to\infty} \Big|   \sum_{j=1}^N p_{j,n} \vartheta_j    \Big| ^{1/n} \leq \alpha.$$
Assume also that $  \sum_{j=1}^N p_{j,n} \vartheta_j \neq 0$ for infinitely many $n$, and that 
for any $j$ the function $\sum_{n=0}^{\infty} p_{j,n}z^n$ is solution of a homogeneous linear differential equation with coefficients in $\Qbar(z)$. Then 
$$
\dim_\K \textup{Span}_{\K} (\vartheta_1, \ldots, \vartheta_N)\ge  \frac{1}{[\K:\Q]} \Big( 1 - \frac{\log(\alpha)}{\log(\beta)}\Big).
$$
\end{coro}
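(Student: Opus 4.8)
The plan is to derive Corollary~\ref{coronest} from Theorem~\ref{th:nest} by showing that, under the stated hypotheses, the linear forms $T_n := \sum_{j=1}^N p_{j,n}\vartheta_j$ automatically have an asymptotic expansion of the type required by \eqref{eqA36}. Set $Q_n = \beta^n$, so that $Q_{n+1} = Q_n^{1+o(1)}$ and the house condition on $p_{j,n}$ becomes exactly the one in Theorem~\ref{th:nest}. The crux is the observation that each sequence $(p_{j,n})_n$ is the coefficient sequence of a $G$-function-like power series $\sum_n p_{j,n}z^n$ annihilated by a linear differential operator over $\Qbar(z)$; hence so is the finite combination $\sum_n T_n z^n$ (the annihilating operators can be combined by taking a least common left multiple, or simply by noting the solution space of a single operator annihilating all of them is finite-dimensional). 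Therefore $(T_n)_n$ satisfies a linear recurrence with polynomial coefficients, equivalently $\sum_n T_n z^n$ is a holonomic series.

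The key step is then to invoke the structure theorem for asymptotics of holonomic sequences: if $\sum_n T_n z^n$ is holonomic and not eventually zero, then by singularity analysis (or by the Birkhoff--Trjitzinsky theory of linear recurrences) $T_n$ admits an asymptotic expansion of the form $T_n \sim \sum_i \gamma_i^n n^{\kappa_i} (\log n)^{\lambda_i}(\text{const}+o(1))$, where the $\gamma_i$ are the reciprocals of the dominant singularities. The hypothesis $\limsup_n |T_n|^{1/n}\le \alpha$ forces every $\gamma_i$ that actually appears to satisfy $|\gamma_i|\le\alpha$; picking out the terms of maximal growth we get $T_n = \rho^n n^{\kappa}(\log n)^\lambda(\sum_{t=1}^T c_t\zeta_t^n + o(1))$ with $|\rho|\le\alpha$, all $\zeta_t$ on the unit circle, pairwise distinct, and the $c_t$ nonzero. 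Since $T_n\ne 0$ infinitely often, this leading block is genuinely present, so $T\ge 1$ and $\rho\ne 0$. Replacing $\rho$ by $\alpha$ only weakens the estimate (one writes $\rho^n = \alpha^n\cdot(\rho/\alpha)^n$ and $|\rho/\alpha|\le 1$; if $|\rho|<\alpha$ the whole block is absorbed into the error against $\alpha^n$, and in any case the conclusion of Theorem~\ref{th:nest} is monotone in $\alpha$), so we are exactly in the situation \eqref{eqA36} with base $\alpha$ and the house bound with base $\beta$. Applying the displayed consequence of Theorem~\ref{th:nest} gives $\dim_\K\Span_\K(\vartheta_1,\ldots,\vartheta_N)\ge \frac{1}{[\K:\Q]}(1-\frac{\log\alpha}{\log\beta})$.

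The main obstacle I expect is making rigorous the passage from ``holonomic and $\limsup|T_n|^{1/n}\le\alpha$'' to an honest asymptotic expansion with a clean dominant block: one must handle the possibility that several singularities of the same modulus $|\rho|$ contribute, that $\rho/|\rho|$ need not be a root of unity, and that logarithmic factors and polynomial factors $n^{\kappa_i}$ of differing real parts occur at the same modulus. All of this is standard in singularity analysis provided one first argues that the generating function $\sum_n T_nz^n$, being holonomic, is analytic in a slit disc up to its finite set of singularities and has moderate (polynomial-times-log) growth there; the case where $\sum_n T_n z^n$ is a polynomial (finitely many nonzero $T_n$) is excluded by the hypothesis that $T_n\ne 0$ infinitely often. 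A secondary, purely bookkeeping point is to check that the $o(1)$'s coming from singularity analysis are compatible with the $n\to\infty$ meaning of $o(1)$ in Theorem~\ref{th:nest}, which is immediate. One could alternatively avoid invoking the full Birkhoff--Trjitzinsky machinery by citing the relevant results from Flajolet--Sedgewick on transfer theorems, exactly as the paper does elsewhere for the estimate \eqref{eqZintro}.
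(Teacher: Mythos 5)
Your proposal takes essentially the same route as the paper's (terse) proof: observe that $\sum_n T_n z^n = \sum_j \vartheta_j \sum_n p_{j,n}z^n$ satisfies a homogeneous linear ODE over $\Qbar(z)$, apply singularity analysis / transfer theorems to deduce an asymptotic of type \eqref{eqA36}, and then invoke Theorem~\ref{th:nest} with $Q_n = \beta^n$. The handling of the case $|\rho| < \alpha$ (by monotonicity of the conclusion in $\alpha$) is fine. There is, however, one small but genuine gap that your generic ``holonomic / Birkhoff--Trjitzinsky'' framing does not fill, and which the paper's remark about $G$-functions is precisely there to close. To land exactly on the shape required by \eqref{eqA36} --- a single $n^\kappa(\log n)^\lambda$ prefactor times $\sum_t c_t\zeta_t^n + o(1)$ with $|\zeta_t|=1$ --- it is not enough to know the generating function is holonomic over $\Qbar(z)$. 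A generic such series can have regular singularities with complex, non-real local exponents, producing contributions of the form $n^{i\theta}$; if two dominant singularities contribute with the same real part of the exponent but different imaginary parts, their sum cannot be put in the form $\sum_t c_t\zeta_t^n + o(1)$ (this is not the same concern as ``$\rho/|\rho|$ need not be a root of unity,'' which is harmless). What makes the argument work is that the hypotheses imply each $\sum_n p_{j,n}z^n$ is in fact a \emph{$G$-function} (integral coefficients in $\OK$, house $\le \beta^{n(1+o(1))}$, annihilated by an operator over $\Qbar(z)$), so by the Andr\'e--Chudnovsky--Katz theorem the relevant operator has regular singularities with \emph{rational} exponents; all $\kappa_i$ are then real (indeed rational) and the problematic $n^{i\theta}$ factors do not occur. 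You write ``$G$-function-like'' but never extract this consequence; you should state explicitly that the series are $G$-functions and that Katz's theorem gives rational exponents, which is exactly the paper's one-line remark. With that addition your plan is complete and coincides with the paper's.
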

The point in Corollary \ref{coronest} is that no lower bound is needed on $|   \sum_{j=1}^N p_{j,n} \vartheta_j  |$. This result fits in the context of $G$-functions, since its assumptions imply that $\sum_{n=0}^{\infty} p_{j,n}z^n$ is a $G$-function for any $j$. 
To deduce Corollary \ref{coronest} from Theorem \ref{th:nest}, it is enough to notice that $ \sum_{n=0}^{\infty}  \sum_{j=1}^N p_{j,n} \vartheta_j z^n =  \sum_{j=1}^N  \vartheta_j 
\sum_{n=0}^{\infty} p_{j,n}z^n $ is  solution of a homogeneous linear differential equation with coefficients in $\Qbar(z)$. We can then apply classical transfer results from Singularity Analysis: an asymptotic estimate like \eqref{eqA36} holds.

\medskip

\begin{proof}[Proof  of Theorem \ref{th:nest}]: For any $n\geq 0$ we consider the following determinant:
$$\Delta_n = \left\vert \begin{matrix} \zeta_1^n & \ldots & \zeta_T^n \\ \vdots && \vdots \\ \zeta_1^{n+T-1} & \ldots & \zeta_T^{n+T-1}\end{matrix}\right\vert.$$
We have $| \Delta_n|  = | \zeta_1^n \ldots\zeta_T^n\Delta_0| = |\Delta_0| \neq 0$ since $\Delta_0$ is the Vandermonde determinant built on the pairwise distinct complex numbers $\zeta_1$, \ldots, $\zeta_T$. We claim that for any $n\geq 0$ there exists $\delta_n \in \{0,\ldots,T-1\}$ such that
\begin{equation} \label{eqjn}
\Big|  \sum_{t=1}^T c_t\zeta_t^{n+\delta_n}\Big| \geq \frac{|c_1 \Delta_0|}{T!}.
  \end{equation}
Indeed if this equation holds for no integer $\delta_n \in\{0,\ldots,T-1\}$ then upon replacing $C_{1,n}$ with $\frac1{c_1}\sum_{t=1}^T c_t C_{t,n}$ (where $C_{t,n}$ is the $t$-th column of the matrix of which $\Delta_n$ is the determinant) we obtain:
$$ |\Delta_0| = |\Delta_n|   < \frac{T}{c_1}    \frac{|c_1 \Delta_0|}{T!} (T-1)! =  |\Delta_0| ,$$
since all minors of size $T-1$ have modulus less than or equal to $(T-1)!$. This contradiction proves the claim \eqref{eqjn} for some $\delta_n \in \{0,\ldots,T-1\}$.

Now let $p'_{j,n} = p_{j, n+\delta_n}$. Since $Q_{n+1} = Q_n^{1+o(1)}$ and $0 \leq \delta_n \leq T-1$ (where $T-1$ does not depend on $n$), 
 all Galois conjugates of $p_{j,n}$ have modulus less than $ Q_n^{1+o(1)}$. Moreover \eqref{eqjn} yields $  \vert \sum_{j=1}^N p'_{j,n} \vartheta_j \vert  =  Q_n^{-\tau+o(1)} $. Therefore
 T\"opfer's Korollar 2  \cite{Topfer} applies to the sequences $(p'_{j,n})$: this concludes the proof of Theorem \ref{th:nest}.
\end{proof}

\begin{Remark} In the proof of Theorem \ref{th:nest} the sequences $(p'_{j,n})$ may be such that $p'_{j,n} = p'_{j,n'}$ for some $n<n'$ even if this does not happen with $p_{j,n}$. This is not a problem since in this case $n'-n\leq T-1$, where $T$ is independent from $n$.
\end{Remark}

\section{Linear recurrences associated with $G$-operators} \label{seclinrec}

In this section we  apply some results of  
Andr\'e, Chudnovsky and Katz to prove a few general properties of $G$-operators (stated in \S \ref{subseclinrec}). We recall that for any $G$-function $F$, any 
 differential operator $L \in \Qbar[z,\frac{d}{dz}]$ of minimal order  such that $LF=0$   is a $G$-operator. We refer to \cite[\S 3]{andre} for   the definition and properties of $G$-operators.

\subsection{Setting and statements} \label{subseclinrec}

\begin{lem} \label{lemBV}
Let $\K$ be a number field, and $L = \sum_{j=0}^\mu P_j(z)\left(\frac d{dz}\right)^j$   a $G$-operator  with $P_j \in\K[X]$ and $P_\mu\neq 0$; denote by $\delta$ the degree of $L$, and by $\omega\geq 0$ the multiplicity of 0 as a singularity of $L$; let $\ell = \delta-\omega$. 

Then there exist some polynomials $Q_j(X)\in \mathbb \OK[X]$   and a positive rational  integer $\alpha$  such that
$$\alpha z^{\mu-\omega} L = \sum_{j=0}^\ell z^{j}Q_j(\theta +j) \mbox{ where } \theta =z\frac{d}{dz}.$$
Moreover letting $d_j = \deg (Q_j)$ we have
$$d_j \leq \mu \mbox{ for any $ 0 \leq j \leq \ell$, and } d_0 = d_\ell = \mu.$$
At last, $Q_0(X) = 0$ and $Q_\ell (-X+\ell)=0$ are (up to a multiplicative constant) the indicial equations of $L$  at 0 and $\infty$, respectively.
\end{lem}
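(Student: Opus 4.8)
The plan is to convert the differential operator $L$ into an operator in the variable $\theta = z\frac{d}{dz}$ by using the classical commutation identities. First I would recall that $z^j\left(\frac{d}{dz}\right)^j = \theta(\theta-1)\cdots(\theta-j+1)$, a polynomial in $\theta$ of degree exactly $j$ with leading coefficient $1$ and constant term $0$ (for $j\geq 1$). Writing each $P_j(z) = \sum_i p_{j,i}z^i$, one has $P_j(z)\left(\frac{d}{dz}\right)^j = \sum_i p_{j,i} z^{i-j}\cdot z^j\left(\frac{d}{dz}\right)^j = \sum_i p_{j,i} z^{i-j}\theta(\theta-1)\cdots(\theta-j+1)$. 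Summing over $j$ and collecting powers of $z$, the operator $L$ becomes $\sum_{m} z^{m}R_m(\theta)$ for suitable polynomials $R_m$; the lowest power of $z$ occurring is $z^{-\omega'}$ for some $\omega'$ related to $\omega$, and the highest is $z^{\delta-\mu}$ since $\deg P_\mu = \delta$. Multiplying by $z^{\mu-\omega}$ (times a common denominator $\alpha\in\Z_{>0}$ to clear denominators of the algebraic-integer coefficients, so that all coefficients land in $\OK$) shifts the range of exponents of $z$ to $\{0,1,\ldots,\ell\}$ with $\ell = \delta-\omega$. The substitution $\theta\mapsto\theta+j$ in the $j$-th term (i.e. writing $z^j R(\theta) = z^j R(\theta)$ and noting $z^j g(\theta) = g(\theta+j)z^j$ should be used in the orientation that puts $z^j$ on the left as $z^j Q_j(\theta+j)$) is just the standard rule $\theta z = z(\theta+1)$, hence $g(\theta)z^j = z^j g(\theta+j)$; so defining $Q_j$ appropriately gives the stated form $\alpha z^{\mu-\omega}L = \sum_{j=0}^\ell z^j Q_j(\theta+j)$.

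Next I would pin down the degrees $d_j = \deg Q_j$. The key point is that $z^j\left(\frac{d}{dz}\right)^j$ contributes a degree-$j$ polynomial in $\theta$, so the term $P_\mu(z)\left(\frac d{dz}\right)^\mu$, which has $\deg P_\mu = \delta$, contributes to the coefficient of $z^{\delta-\mu}$ (i.e. after the shift, to $z^\ell$) a polynomial of degree exactly $\mu$ in $\theta$ (from the top-degree monomial of $P_\mu$ times the degree-$\mu$ polynomial $\theta\cdots(\theta-\mu+1)$), and no other term $P_j\left(\frac d{dz}\right)^j$ with $j<\mu$ can reach degree $\mu$ in $\theta$ since its $\theta$-degree is at most $j<\mu$. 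Hence $d_\ell = \mu$. Similarly, the lowest-power coefficient $z^0$ after normalization, i.e. $Q_0$: the term of lowest $z$-degree comes from the monomials of $P_j$ of degree $j-\omega_j$ where $\omega$ is the minimal order of vanishing; I would argue that the coefficient of $z^{-\omega}$ before normalization, equivalently $Q_0$, has degree $\mu$ because the relevant contribution again involves the full $\theta(\theta-1)\cdots(\theta-\mu+1)$ coming from $j=\mu$ (since $\infty$ regular means $\deg P_\mu=\delta$, and at $0$ the structure of a fuchsian/Fuchs-type $G$-operator forces $P_\mu$ to have $z$-valuation $\omega$ so that $P_\mu$ contributes to the $z^{-\omega}$-coefficient via its lowest monomial). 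The bound $d_j\leq\mu$ for all $j$ is immediate since every contributing $\theta$-polynomial has degree $\leq\mu$.

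Finally, for the statement identifying $Q_0(X)=0$ and $Q_\ell(-X+\ell)=0$ as the indicial equations at $0$ and $\infty$: by definition, the indicial polynomial of $L$ at $0$ is obtained by plugging $z^X$ into $L$ and extracting the lowest-order term in $z$; since $\theta z^X = Xz^X$, applying $\sum_j z^j Q_j(\theta+j)$ to $z^X$ gives $\sum_j z^{j+X}Q_j(X+j)$, whose lowest-order term (coefficient of $z^X$) is $Q_0(X)z^X$, so the indicial equation at $0$ is $Q_0(X)=0$ up to the nonzero constant $\alpha z^{\mu-\omega}$-factor. For $\infty$, I would substitute $z = 1/w$, so $\theta = z\frac{d}{dz} = -w\frac{d}{dw} =: -\theta_w$, and plugging $z^{-X} = w^X$ into $\alpha z^{\mu-\omega}L$ gives $\sum_j w^{-j}Q_j(-\theta_w+j)$ applied to $w^X$, i.e. $\sum_j w^{X-j}Q_j(-X+j)$; the lowest-order term in $w$ (which governs the indicial equation at $w=0$, i.e. at $z=\infty$) is the one with the largest $j$, namely $j=\ell$, giving $Q_\ell(-X+\ell)w^{X-\ell}$, so the indicial equation at $\infty$ is $Q_\ell(-X+\ell)=0$, as claimed.

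The main obstacle I anticipate is the careful bookkeeping at the bottom end --- proving $d_0=\mu$ and correctly identifying the precise power of $z$ by which to multiply. This requires using that $L$ is a $G$-operator (hence fuchsian, with $0$ and $\infty$ regular singular points), so that the $z$-valuations of the $P_j$ are constrained: one needs $\mathrm{val}_z(P_j)\geq\mu-j$ adjusted appropriately, and $\mathrm{val}_z(P_\mu)=\omega$ exactly, to guarantee that after multiplying by $z^{\mu-\omega}$ the lowest power is $z^0$ with a genuinely degree-$\mu$ coefficient and that no negative powers of $z$ survive. I would handle this by invoking the Fuchs criterion / the definition of $\omega$ together with the regularity at $\infty$ (which forces $\deg P_\mu=\delta$), and checking the extreme monomials explicitly as above; the interior degree bound and the indicial-equation identifications are then essentially formal consequences of the $\theta z = z(\theta+1)$ calculus.
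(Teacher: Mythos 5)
Your proposal follows essentially the same route as the paper's proof: expand each $P_j(z)\bigl(\tfrac{d}{dz}\bigr)^j$ in powers of $\theta$ via $z^j\bigl(\tfrac{d}{dz}\bigr)^j=\theta(\theta-1)\cdots(\theta-j+1)$, use the regularity of $L$ at $0$ and $\infty$ (i.e.\ $\operatorname{val}_z(P_k)\ge\max(0,\omega-\mu+k)$ with equality $=\omega$ at $k=\mu$, and $\deg P_k\le \delta-\mu+k$ with equality at $k=\mu$) to show the $z$-exponent range is exactly $\{\omega-\mu,\ldots,\delta-\mu\}$, and read off $d_0=d_\ell=\mu$ from the extreme monomials of $P_\mu$ paired with the $\theta$-degree-$\mu$ polynomial coming from $z^\mu\bigl(\tfrac{d}{dz}\bigr)^\mu$. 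The identification of the indicial equations and the bound $d_j\le\mu$ are then the same formal consequences the paper draws; your only slip is the statement of the valuation constraint (it should read $\operatorname{val}_z(P_j)\ge\max(0,\omega-\mu+j)$, not ``$\ge\mu-j$''), but you flag this yourself and it does not affect the argument.
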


This lemma belongs to folklore (see for instance \cite[\S 4.1]{BV} for a part of it) but for the sake of completeness we provide a proof in \S \ref{subsec22} below.

In what follows we keep the notation and assumptions of Lemma \ref{lemBV}.  We   denote by $\widehat e_{1}$, \ldots, $\widehat e_\kappa$  and  $\widehat f_{1}$, \ldots, $\widehat f_\eta$ the integer  exponents of $L$ at 0 and $\infty$, respectively; they are the integer  roots of the indicial equations at 0 and $\infty$, namely $Q_0(X) = 0$ and $Q_\ell (-X+\ell)=0$. We let $m\geq 1$ be such that 
$$
m > - \widehat{e}_{i} \mbox{ and } m > \widehat{f}_{j} - \ell  \mbox{ for all } 1\le i\le \kappa, 1\leq j \leq \eta ; 
$$
of course the condition on $ \widehat{e}_{i}$ (resp. $ \widehat{f}_{j} $) is  always satisfied if $\kappa=0$ (resp. $\eta=0$).
 Then  $Q_0(-n)\neq 0$ and $Q_\ell(-n)\neq 0$   for any integer $n\geq m$, so that 
the  linear recurrence relation 
\begin{equation}\label{eq:12b}
\sum_{j=0}^\ell Q_j(-n) U(n+j) =0,\quad n\ge m
\end{equation}
(satisfied by the Taylor coefficients of any power series in $1/z$ annihilated by $L$, see Step~1 in the proof of Lemma  \ref{lemlinrec}) 
has a $\mathbb C$-basis  of solutions 
 $(u_{1}(n))_{n\ge m}, \ldots, (u_{\ell}(n))_{n\ge m}$  with $u_j(n)\in\K$ for any $1\leq j \leq \ell$ and any $n\geq m$.  
The determinant  
\begin{equation}\label{eq:8b}
W(n)=\left \vert \begin{matrix}
u_1(n+\ell-1)&\cdots & u_{\ell}(n+\ell-1)
\\
u_1(n+\ell-2)&\cdots &u_{\ell}(n+\ell-2)
\\
\vdots & \vdots & \vdots 
\\
u_1(n)&\cdots &u_\ell(n)
\end{matrix}\right\vert
\end{equation}
is called a wronskian (or casoratian) of the recurrence.  

\medskip

Now let us consider an inhomogeneous linear recurrence relation
\begin{equation}\label{eq:4}
\sum_{j=0}^\ell Q_j(-n) V(n+j) = g(n),\quad n\ge m
\end{equation}
where $g(n)$ is defined for any $n\ge m$.
We let  $\Delta_j(n)= D_j(n)\frac{g(n-1)}{Q_\ell(1-n)}$ for $n\geq m+1$, where 
\begin{equation}\label{eqdjn}
D_j(n)=(-1)^j\left \vert \begin{matrix}
u_1(n+\ell-2)&\cdots &u_{j-1}(n+\ell-2)&u_{j+1}(n+\ell-2) &\cdots & u_{\ell}(n+\ell-2)
\\
\vdots & \vdots & \vdots & \vdots &\vdots&\vdots 
\\
u_1(n)&\cdots &u_{j-1}(n)&u_{j+1}(n)&\cdots & u_{\ell}(n)
\end{matrix}\right\vert .
\end{equation}

\begin{lem}\label{lemlinrec} The general solution of the recurrence \eqref{eq:4} is 
$$
V(n) = \sum_{j=1}^\ell \Big(\ddj +\sum_{k=m+1}^{n} \frac{\Delta_{j}(k)}{W(k)} \Big) u_{j}(n), \quad n\ge m , 
$$
where   $\ddjj_1$, \ldots, $\ddjj_\ell$  are arbitrary complex numbers. Moreover we have $W(n) \neq 0$ for any $n\geq m$, and the power series $\sum_{n = m}^{\infty} \frac{z^n}{W(n)}$,  $\sum_{n=m}^\infty u_j(n)z^n$ and  $\sum_{n=m+1}^\infty \frac{D_j(n)}{Q_\ell(1-n)}z^n$ (with $1\leq j \leq \ell$) are  $G$-functions.
\end{lem}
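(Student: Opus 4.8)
The plan is to proceed in three stages: first establish the variation-of-constants formula for the inhomogeneous recurrence, then prove $W(n)\neq 0$ for all $n\ge m$, and finally verify the $G$-function property of the three generating series. For the first stage, I would argue exactly as in the classical continuous case (Lagrange's method of variation of parameters), transposed to the discrete setting. Writing $V(n) = \sum_{j=1}^\ell c_j(n) u_j(n)$ and imposing the standard $\ell-1$ ``no change'' conditions $\sum_{j=1}^\ell (c_j(n+1)-c_j(n)) u_j(n+i) = 0$ for $i = 0,\ldots,\ell-2$ together with the genuine one coming from \eqref{eq:4}, one gets a linear system for the increments $c_j(n+1)-c_j(n)$ whose matrix is (a shift of) the wronskian $W$; Cramer's rule then yields $c_j(n+1)-c_j(n) = \Delta_j(n+1)/W(n+1)$ after identifying the relevant minors with the $D_j$'s (note the shift by $1$ in the index and the normalization by $Q_\ell(1-n)$ coming from the coefficient of $V(n+\ell)$ in \eqref{eq:4} at argument $n$). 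Summing from $m+1$ to $n$ and absorbing the free constants $c_j(m)$ into the $\chi_j$ gives the stated formula. This stage is routine bookkeeping; the only care needed is the index shifts and signs in \eqref{eqdjn}.

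For the second stage, $W(n)\neq 0$, I would use the Casorati-determinant analogue of Abel's identity: expanding $W(n+1)$ along its top row and using the recurrence \eqref{eq:12b} to express $u_j(n+\ell)$ in terms of $u_j(n),\ldots,u_j(n+\ell-1)$ shows that $W(n+1) = \dfrac{(-1)^{\ell-1}Q_0(-n)}{Q_\ell(-n)} W(n)$ for $n\ge m$. Since $Q_0(-n)$ and $Q_\ell(-n)$ are both nonzero for $n\ge m$ by the choice of $m$ (the integer exponents having been excluded), $W(n)$ either vanishes identically or never vanishes on $n\ge m$; and it does not vanish identically because $(u_1,\ldots,u_\ell)$ is by hypothesis a basis of the solution space, so $W(m)\neq 0$. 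This also gives an explicit product formula $W(n) = W(m)\prod_{k=m}^{n-1} (-1)^{\ell-1}Q_0(-k)/Q_\ell(-k)$ which will be useful for the last stage.

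For the third stage — the $G$-function claim — the key input is the Andr\'e--Chudnovsky--Katz machinery on $G$-operators (as in \cite{andre,firi}), which should have been set up in \S\ref{subseclinrec}; the point is that $L$ being a $G$-operator forces its ``dual'' or ``adjoint''-type companion recurrences, and the solutions thereof, to have the Galois-conjugate growth and common-denominator bounds defining $G$-functions. Concretely: the series $\sum_n u_j(n) z^n$ annihilates an operator obtained from $L$ by the substitution $z\mapsto 1/z$ (conjugation by inversion sends $G$-operators to $G$-operators), hence is a $G$-function; the wronskian $\sum_n z^n/W(n)$ is handled via the explicit product formula above, using that ratios of values of fixed polynomials at integer arguments have geometrically bounded denominators and conjugates (a lemma on $\prod Q(-k)$ of Chudnovsky--Katz type, presumably the ``Katz'' half of the cited results), so $1/W(n)$ grows at most geometrically in all embeddings and has geometrically bounded denominators; and $\sum_n \frac{D_j(n)}{Q_\ell(1-n)} z^n$ is then a $G$-function because $D_j(n)$ is a polynomial combination (a determinant of size $\ell-1$) of the $u_i(n+\cdot)$, products and sums of $G$-function coefficients remain $G$-function coefficients, and dividing by $Q_\ell(1-n)$ again only costs a geometric factor by the same polynomial-denominator lemma.

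The main obstacle I anticipate is the third stage, specifically controlling $1/W(n)$: one must know not merely that $\prod_{k}Q_0(-k)$ grows geometrically (clear) but that $\prod_k Q_\ell(-k)$ does \emph{not} force a super-geometric denominator in $W(n)^{-1}$, i.e. that the primes dividing these products are suitably tame — this is exactly where a global/adelic statement about $G$-operators (the bound on the ``$p$-adic radii'', Chudnovsky's $\sigma$-invariant, or Katz's theorem on the global nilpotence and the resulting bounded denominators of the exponents) must be invoked rather than an elementary estimate. Everything else — the variation-of-constants derivation, the Abel-type identity, the closure of $G$-functions under the relevant operations — is standard once that structural fact is in place.
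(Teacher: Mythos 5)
Your three-stage plan matches the paper's organization, and the variation-of-constants and non-vanishing arguments are essentially the paper's (the paper proves $W(n)\neq 0$ directly by propagating a putative linear dependence, you use the Abel identity plus $W(m)\neq 0$ — both work; note your Abel factor should be $(-1)^{\ell}Q_0(-n)/Q_\ell(-n)$, not $(-1)^{\ell-1}$, though this has no bearing on the conclusions). The differences, and one real gap, are in the $G$-function stage.

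For $\sum_n u_j(n)z^n$, your statement that the series ``annihilates an operator obtained from $L$ by the substitution $z\mapsto 1/z$'' is not correct as written. If $U(z)=\sum_{n\ge m}u_j(n)z^{-n}$, then $\alpha z^{\mu-\omega}LU(z)$ is not $0$ but equals $z^{1-m}U_0(z)$ for some polynomial $U_0$ of degree $\le \ell-1$ (because the recurrence only holds for $n\ge m$ and the Laurent expansion of $LU$ has $\ell$ ``boundary'' terms). The paper fixes this by observing that $\big(\frac{d}{dz}\big)^{\ell}z^{\mu-\omega+m-1}L$ does annihilate $U$ and is still a $G$-operator, and only then invokes André--Chudnovsky--Katz. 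This is a genuine missing step in your sketch: without killing the inhomogeneity you do not yet have a homogeneous $G$-operator to apply the ACK theorem to.

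For $\sum_n z^n/W(n)$, you propose a direct denominator-and-house estimate on $\prod_k Q_\ell(-k)/\prod_k Q_0(-k)$, invoking $\sigma$-invariants or $p$-adic radii. The paper uses a lighter and cleaner route: Katz's theorem gives that the exponents of a $G$-operator are rational, so $Q_0$ and $Q_\ell$ split over $\Q$; the Abel identity then exhibits $W(n)$ as a ratio of Pochhammer symbols with rational parameters, so $\sum_n z^n/W(n)$ is a ${}_{d+1}F_d$ hypergeometric series with rational parameters, hence automatically a $G$-function. You correctly identify Katz as the structural input, but your route would make you re-prove arithmetic bounds that the hypergeometric normal form gives for free; if you go your way you must actually carry out the adelic estimate you gesture at, which is more work than the lemma needs. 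The last series $\sum_n D_j(n)z^n/Q_\ell(1-n)$ you handle exactly as the paper does (Hadamard products plus the rational-split $Q_\ell$), so no issue there.
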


The first part of this lemma (namely, the expression of $V(n)$) is valid as soon as $Q_0(-n)Q_\ell(-n)\neq 0$ for any $n\geq m$: it does not rely on the assumption that $L$ is a $G$-operator.

\subsection{Proofs} \label{subsec22}

\begin{proof}[Proof of Lemma \ref{lemBV}]
Since $\infty$ is a regular singularity of $L$ we have $\deg (P_k) \leq \delta-(\mu-k)$ for any $k$ and $\deg (P_\mu) = \delta$; since $0$  is a regular singularity the order of vanishing of each $P_k$ at 0 is at least $\max(0, \omega- (\mu-k))$. Therefore we may 
 write 
 $$P_k(z) = \sum_{i= \max(0, \omega-  \mu+k )}^{\delta-\mu+k} p_{k,i} z^i.$$
  Now observe that for any $n\geq 1$, 
$$z^n \Big( \frac{d}{dz} \Big)^n = \sum_{j=1}^n c_{j,n} \theta^j \mbox{ with } c_{j,n}\in\Q \mbox{ and } c_{n,n}=1.$$
Then we let $p_{0,i}=0$ if $i\leq-1$, and 
$$S_k(X) = p_{0,k-\mu}+ \sum_{j=1}^\mu \Big( \sum_{i= \max(0, k+j-\mu)}^k p_{i+\mu-k, i}c_{j, i+\mu-k}\Big) X^j$$
for $\omega \leq k \leq \delta$, so that $\deg (S_\omega) =  \deg (S_\delta)= \mu$ and 
$$L = \sum_{k=\omega}^\delta z^{k-\mu} S_k(\theta).$$
Let  $\alpha \geq 1$ denote a common denominator of the (algebraic) coefficients of all polynomials $S_k$, and put $Q_j(X) = \alpha S_{j+\omega}(X-j)$. Then we have 
$$\alpha z^{\mu-\omega} L = \sum_{j=0}^\ell z^{j}Q_j(\theta +j).  $$
At last, since   $  \theta  z^p = p  z^p$ for any $p\in\Z$ we have
$$L z^p = \frac1{\alpha} \sum_{j=0}^\ell Q_j(p+j)z^{p+j+\omega-\mu}.$$
Since $Q_0$ and $Q_\ell$ have degree $\mu$, they are non-zero and (up to the multiplicative constant $ \frac1{\alpha} $) the  indicial equations at 0 and infinity are  respectively $Q_0(p) = 0$ and $Q_\ell (-p+\ell)=0$.
\end{proof}

\smallskip

\begin{proof}[Proof of Lemma \ref{lemlinrec}] We split the proof into four steps. Step 3 and a part of Step 2 are somewhat classical, and do not rely on the assumption that $L$ is a $G$-operator (see for instance \cite[pp. 5 and 22]{norlund}). However we provide a complete proof for the reader's convenience.

\medskip

\noindent {\bf Step 1:} Proof that $\sum_{n=m}^\infty u_{j}(n) z^n$ is a $G$-function.

For any power series $U(z) = \sum_{n=m}^\infty u_nz^{-n}$, Lemma \ref{lemBV} yields
$$\alpha z^{\mu-\omega} L U(z) = \sum_{k = m-\ell} ^{+\infty} \Big( \sum_{j=\max(0, m-k)}^{\ell} u_{k+j} Q_j(-k)\Big) z^{-k}.$$
Therefore $(u_n)_{n\geq m}$ is a solution of \eqref{eq:12b} if, and only if, $\alpha z^{\mu-\omega} L U(z)  = z^{1-m}U_0(z)$ where $U_0$ is a polynomial of degree at most $\ell-1$. 
In this case $(\frac{d}{d z})^{\ell} z^{\mu-\omega+m-1}L$ is a $G$-operator that annihilates $U(z)$; notice that $z^{\mu-\omega }L\in \K[z, \frac{d}{d z}]$ even if $\omega>\mu$, since 0 is a regular singularity of $L$. Applying the 
 Andr\'e-Chudnovsky-Katz theorem (see \cite[p. 719]{andre} or \cite[\S 4.1]{firi}), we deduce that if $u_n\in\K$ for any $n$ then $U(z)$ is a $G$-function in $1/z$.

\medskip

\noindent {\bf Step 2:} Computation of the wronskian $W(n)$.

First of all, if $W(n_0)=0$ for some $n_0\geq m$ then we obtain $\lambda_1,\ldots,\lambda_\ell\in\C$, not all zero, such that $\lambda_1 u_1(n) + \ldots + \lambda_\ell u_\ell(n) =0$ for any $n_0 \leq n \leq n_0 +\ell-1$; by induction this equality holds for any $n\geq m$, which is a contradiction. Therefore we have $W(n)\neq 0$ for any $n\geq m$. Moreover 
   $W(n)$ is solution of the linear recurrence of order $1$ 
\begin{equation}\label{eq:13}
Q_\ell(-n) W(n+1)=(-1)^{\ell}Q_0(-n)W(n),
\end{equation}
since the left hand side is equal to 
$$
\left \vert \begin{matrix}
-\sum_{j=0}^{\ell-1} Q_j(-n) u_1(n+j) & \ldots  &-\sum_{j=0}^{\ell-1} Q_j(-n) u_\ell(n+j)  \\ 
u_1(n+\ell-1)  &\ldots & u_\ell(n+\ell-1) \\
\vdots & & \vdots \\ 
u_1(n+1)  &\ldots & u_\ell(n+1)
\end{matrix}\right\vert.
$$
By Lemma \ref{lemBV} we have
$$
Q_0(X)=\gamma_0  \prod_{i=1}^\mu  (X-e_{i})  \quad  \mbox{ and }  \quad 
Q_\ell(X)=\gamma_\ell  \prod_{i=1}^\mu  (X+ f_{i} - \ell ),
$$
where $\gamma_0 , \gamma_\ell $ are non-zero elements of $\K$ and $e_1$, \ldots, $e_\mu $ (resp. $f_1$, \ldots, $f_\mu$) are the exponents of $L$ at 0 (resp. at $\infty$). 
  By Katz' theorem~\cite[p. 719]{andre},  these   exponents    are rational numbers. The recurrence \eqref{eq:13} is easily solved: for $n\ge m$, we have 
\begin{align*}
W(n)&= (-1)^{\ell(n-m)}\frac{Q_0(1-n)\cdots Q_0(-m)}{Q_\ell(1-n)\cdots Q_\ell(-m)}W(m) 
\\
&=W(m)\big((-1)^{\ell} \gamma_0  /  \gamma_\ell   \big)^{n-m} \prod_{i=1}^\mu  \frac{(n-1  + e_{i}  )\cdots (m+ e_{i}  )}{(n-1-f_{i} + \ell )\cdots (m-f_{i} + \ell)}    
\\
&= W(m)\big((-1)^{\ell} \gamma_0  /  \gamma_\ell   \big)^{n-m} \prod_{i=1}^\mu  \frac{(m+ e_{i} )_{n-m}}{(m-f_{i} + \ell)_{n-m}}. 
\end{align*}
Therefore  $\sum_{n = m}^{\infty} \frac{z^n}{W(n)}$ is a  ${}_{d+1} F_d$ hypergeometric series with rational parameters,  and accordingly  a $G$-function.

\medskip

\noindent {\bf Step 3:} Computation of the  solutions of \eqref{eq:4}.

Since $W(n)\neq 0$ for any $n\geq m$, given any sequence $(v(n))_{n\geq m}$ there exist sequences $(c_j(n))_{n\geq m}$, $1\leq j \leq \ell$, such that 
\begin{equation} \label{eq:7av}
v(n+k)=\sum_{j=1}^\ell c_j(n)u_j(n+k), \quad k=0, \ldots, \ell-1.
\end{equation}
This equation with $n+1$ and $k-1$ reads
$$
v(n+k)=\sum_{j=1}^\ell c_j(n+1)u_j(n+k), \quad k=1, \ldots, \ell-1, 
$$
so that 
\begin{equation}\label{eq:7}
\sum_{j=1}^\ell (\Delta c_j(n)) u_j(n+k) = 0, \quad k=1, \ldots, \ell-1, 
\end{equation}
where we define as usual the difference operator $\Delta x_n := x_{n+1}-x_n$. 

 Now let us assume that
$(v(n))_{n\geq m}$ is a solution of the inhomogeneous linear recurrence relation~\eqref{eq:4}. 
Since \eqref{eq:7av} with $n+1$ and $\ell-1$ yields
$$
v(n+\ell) = \sum_{j=1}^\ell (\Delta c_j(n)) u_j(n+\ell) + \sum_{j=1}^\ell c_j(n)u_j(n+\ell),  
$$
we obtain using also \eqref{eq:7av} and the fact that  $(u_j(n))_{n\geq m}$ is a solution of \eqref{eq:12b} for any $j$:
\begin{equation}\label{eq:8}
\sum_{j=1}^\ell (\Delta c_j(n)) u_j(n+\ell) = \frac{g(n)}{Q_\ell(-n)}.
\end{equation}
The $\ell$ equations given by \eqref{eq:7} and \eqref{eq:8} form a system of linear equations which enables us to find $\Delta c_j(n)$ by Cram\'er's rule because the determinant of the system is the wronskian
$W(n+1)$ defined by~\eqref{eq:8b}. We have $W(n+1)\neq  0$ (by Step 2) so that 
$
\Delta c_j(n) = \frac{\Delta_j(n+1)}{W(n+1)}
$
since  $\Delta_{j}(n) =  D_j(n)\frac{g(n-1)}{Q_\ell(1-n)}$ is equal to  the following determinant:
$$
\left \vert \begin{matrix}
u_1(n+\ell-1)&\cdots &u_{j-1}(n+\ell-1)&\frac{g(n-1)}{Q_\ell(1-n)}&u_{j+1}(n+\ell-1) &\cdots & u_{\ell}(n+\ell-1)
\\
u_1(n+\ell-2)&\cdots &u_{j-1}(n+\ell-2)&0&u_{j+1}(n+\ell-2) &\cdots& u_{\ell}(n+\ell-2)
\\
\vdots & \vdots & \vdots & \vdots &\vdots&\vdots &\vdots 
\\
u_1(n)&\cdots &u_{j-1}(n)&0&u_{j+1}(n) &\cdots& u_{\ell}(n)
\end{matrix}\right\vert.
$$
Therefore we obtain
$$
c_j(n)=c_j(m)+\sum_{k=m}^{n-1}\Delta c_j(k) = c_j(m)+\sum_{k=m+1}^{n} \frac{\Delta_j(k)}{W(k)}, \quad n\ge m,
$$
and finally
$$
v(n)=\sum_{j=1}^\ell \Big(c_j(m)+\sum_{k=m+1}^{n} \frac{\Delta_j(k)}{W(k)}\Big)u_j(n).
$$
Conversely, the same computations prove that any sequence defined in this way (with arbitrary constants $c_j(m)$, $1\leq j \leq \ell$) is 
a solution of the inhomogeneous linear recurrence relation~\eqref{eq:4}. 

\medskip

\noindent {\bf Step 4:} Proof that $\sum_{n=m+1}^\infty \frac{D_j(n)}{Q_\ell(1-n)}z^n$ is a $G$-function.

Expanding the determinant in \eqref{eqdjn} we see that $\sum_{n=m+1}^\infty  D_j(n) z^n$ is a $\Z$-linear combination of Hadamard (i.e., coefficientwise) products of $G$-functions $\sum_{n=m+1}^\infty  u_h(n+i) z^n$, so that it is a $G$-function. On the other hand $\sum_{n=m+1}^\infty \frac{z^n}{Q_\ell(1-n)}$  is  a $G$-function because $Q_\ell $ is split over the rationals (see Step 2)  so that finally $\sum_{n=m+1}^\infty \frac{D_j(n)}{Q_\ell(1-n)}z^n$ is a $G$-function for any $j$.
\end{proof}

\section{Properties of $F_n^{[s]}(z)$} \label{secpreuveprinc}

Throughout this section, let  $\K$ be a number field, and $L = \sum_{j=0}^\mu P_j(z)\left(\frac d{dz}\right)^j$   a $G$-operator  with $P_j \in\K[X]$ and $P_\mu\neq 0$. We denote by $\delta$ the degree of $L$ (i.e., $\delta = \deg (P_\mu)$ since $\infty$ is a regular singularity of $L$),  by $\omega\geq 0$ the multiplicity of 0 as a singularity of $L$ (i.e., the order of vanishing of $P_\mu$ at $0$), and we let $\ell = \delta-\omega$.  

Let $F(z) = \sum_{k=0}^\infty A_k z^k$, with $A_k\in\K$, be such that $LF=0$; then $F$ is a $G$-function. Of course, starting with such a $G$-function $F$, one may choose for $L$ a differential operator, of minimal order, such that $LF=0$; then $L$ is a $G$-operator.

Recall that we let $\theta =  z \frac{d}{dz}$ and that
$$F_n^{[s]}(z)=\sum_{k=0}^\infty \frac{A_k}{(k+n)^s}z^{k+n}.
$$

\subsection{The main proposition} \label{subsec40}

As in the introduction,  we denote by $\widehat{f}_{1}$, \ldots, $\widehat{f}_{\eta}$ the integer exponents of $L$ at  $\infty$, with $\eta=0$ if there isn't any.

\begin{prop}  \label{prop:1} Let  $m\ge 1$ be such that 
\begin{equation} \label{eqhypm}
  m > \widehat{f}_{j} - \ell  \; \mbox{ for all }    1\le j \leq \eta.
\end{equation}
Then for any $s,n\geq 1$:

$(i)$ There exist some algebraic   numbers  $\ppP_{j,t,s,n}\in \K$, and some polynomials $\qqQ_{j,s,n}(z) \in \K[z]$ of degree at most $n+s(\ell -1)$,  such that 
\begin{equation}\label{eq:406}
F_n^{[s]}(z)=\sum_{t=1}^{s}\sum_{j=1}^{\ell+m-1} \ppP_{j,t,s,n}F_j^{[t]}(z) + \sum_{j=0}^{\mu-1} \qqQ_{j,s,n}(z)(\theta^j F)(z).
\end{equation}

$(ii)$ All Galois conjugates of all the numbers   $  \ppP_{j,t,s,n} $  ($j\le\ell +m-1$, $t\le s$), and   all Galois conjugates of all the coefficients  of the polynomials $\qqQ_{j,s,n}(z)$  ($j\le \mu-1$), have modulus less than  $H(F,s,n)>0$ with  
$$
\limsup_{n\to + \infty } H(F,s,n)^{1/n} \le  C_1(F)^s
$$ 
for some constant $C_1(F)\ge 1$ independent of $s$.

$(iii)$ Let $D(F,s,n)>0$ denote the least common denominator of the algebraic numbers $\ppP_{j,t,s,n'}$ ($j\le\ell + m-1$, $t\le s$, $n'\leq n$) and of the coefficients of the polynomials $\qqQ_{j,s,n'}(z)$ ($j\le \mu-1$, $n'\le n$); then
$$
\limsup_{n\to + \infty } D(F,s,n)^{1/n} \le  C_2(F)^s
$$ 
for some constant $C_2(F)\ge 1$ independent of $s$.
\end{prop}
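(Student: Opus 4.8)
The plan is to produce the identity \eqref{eq:406} by induction on $s$, using the theory of linear recurrences associated with $G$-operators developed in \S\ref{seclinrec}. The starting point is the observation that the generating function $F(z) = \sum_k A_k z^k$, viewed through the substitution $z \mapsto 1/z$, corresponds to a sequence $(A_k)$ that (after possibly discarding finitely many terms, which forces the cutoff $m$ via \eqref{eqhypm}) satisfies the homogeneous recurrence \eqref{eq:12b} up to an inhomogeneous term coming from a polynomial of bounded degree; more precisely, for the tails $F_n^{[s]}$, summing $\frac{A_k}{(k+n)^s}$ against the recurrence operator $\sum_j Q_j(-k)$ produces an inhomogeneous recurrence of the shape \eqref{eq:4} in the index $n$, with right-hand side $g(n)$ expressible in terms of the lower-weight quantities $F_j^{[t]}$ with $t < s$ together with the ``polynomial part'' $\theta^j F$. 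This is where the induction on $s$ feeds in: at weight $s$ the inhomogeneity involves only weights $\le s-1$ and the derivatives $\theta^j F$, $j \le \mu-1$.

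First I would make precise the recurrence satisfied, in $n$, by the family $\big(F_n^{[s]}(z)\big)_{n}$ for fixed $z$ and fixed $s$. Applying $\sum_{j=0}^\ell Q_j(-n-\cdot)$-type combinations and using $LF=0$, one gets that $V(n) := F_n^{[s]}(z)$ (or a suitable normalization of it) satisfies an inhomogeneous recurrence \eqref{eq:4}; the key algebraic point is that differentiating $\frac{1}{(k+n)^s} = \frac{1}{(k+n)^{s-1}}\cdot\frac{1}{k+n}$ and shifting $n$ produces telescoping relations reducing weight $s$ to weight $s-1$. Then I would invoke Lemma~\ref{lemlinrec}: its explicit formula
$$
V(n) = \sum_{j=1}^\ell \Big(\ddjj_j + \sum_{k=m+1}^n \frac{\Delta_j(k)}{W(k)}\Big) u_j(n)
$$
expresses $F_n^{[s]}(z)$ as a $\K$-linear combination of the basis solutions $u_j(n)$ with coefficients that are partial sums of $\Delta_j(k)/W(k)$. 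The $u_j(n)$ themselves, being solutions in $\K$ of \eqref{eq:12b}, correspond via Step~1 of the proof of Lemma~\ref{lemlinrec} to $G$-functions in $1/z$; evaluating at the appropriate point and re-expanding, each $\sum_n u_j(n) z^{?}$ can be rewritten in terms of the $F_j^{[t]}$ with $j \le \ell+m-1$ (the $m-1$ extra shifts absorbing the discarded initial terms) and the $\theta^j F$. The constants $\ddjj_j$ are fixed by matching initial conditions at $n = 1, \ldots, \ell$, which forces them to be explicit $\K$-linear combinations of $F_j^{[s]}(z)$, $j \le \ell+m-1$ — and this, combined with the telescoping/recursive structure, is precisely \eqref{eq:406}. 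The degree bound $n + s(\ell-1)$ on $\qqQ_{j,s,n}$ comes from tracking the polynomial contributions: each weight level adds $\ell-1$ to the degree, and the index $n$ contributes the cutoff of the partial sums.

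For parts $(ii)$ and $(iii)$, the plan is to track sizes and denominators through the formula. Since all objects involved — $W(n)$ (a product of Pochhammer symbols with rational parameters, by Step~2 of Lemma~\ref{lemlinrec}), the $u_j(n)$, and $\frac{D_j(n)}{Q_\ell(1-n)}$ — are Taylor coefficients of $G$-functions, they satisfy the defining Archimedean bound $|\sigma(\cdot)| \le C^{n+1}$ and the denominator bound $D_n \le D^{n+1}$ of the Introduction. The quantities $\Delta_j(k)/W(k)$ are quotients and products of such coefficients; a quotient of $G$-function coefficients still has geometric house and geometric denominator growth (using again that $W(n)^{-1}$, as the coefficient sequence of a hypergeometric $G$-function, is itself controlled). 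Summing from $m+1$ to $n$ and multiplying by at most $n$ shift-contributions only costs polynomial factors, hence does not affect $\limsup(\cdot)^{1/n}$. The passage from weight $s-1$ to weight $s$ multiplies the relevant constant by a fixed factor depending only on $F$ (one more level of the recurrence), which yields the announced $C_1(F)^s$ and $C_2(F)^s$ by induction on $s$ with $C_1(F), C_2(F)$ independent of $s$. The main obstacle, I expect, is bookkeeping: setting up the weight-$s$ recurrence with an inhomogeneous term that is manifestly a $\K$-combination of the lower-weight $F_j^{[t]}$ and the $\theta^j F$ (rather than something more complicated), and verifying that the initial-condition matching genuinely closes the induction with the stated index ranges $j \le \ell+m-1$, $t \le s$ — in particular checking that no term of weight $> s$ or index $> \ell+m-1$ sneaks in, and that the denominators of the $u_j(n)$ and of $W(n)^{-1}$ really are geometric uniformly in the (fixed) number field $\K$.
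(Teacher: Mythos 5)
Your overall strategy matches the paper's: prove the identity by induction on $s$, using Lemma~\ref{lem:1b} to produce an inhomogeneous recurrence in $n$ for $F_n^{[s]}(z)$ whose right-hand side involves only weights $\le s-1$ and the $\theta^j F$, then solve it via the variation-of-constants formula of Lemma~\ref{lemlinrec}, fix the constants by matching initial conditions, and obtain $(ii)$--$(iii)$ from the $G$-function structure of $W(n)^{-1}$, $u_j(n)$ and $D_j(n)/Q_\ell(1-n)$. The key lemmas you invoke are exactly the ones the paper uses.

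However, there is a genuine gap in the way you pass from the hypothesis \eqref{eqhypm} to the application of Lemma~\ref{lemlinrec}. The hypothesis \eqref{eqhypm} ($m > \widehat f_j - \ell$) only guarantees $Q_\ell(-n)\neq 0$ for $n\ge m$; it says nothing about the roots of $Q_0$, i.e.\ about the integer exponents $\widehat e_i$ at $0$. But the explicit formula of Lemma~\ref{lemlinrec} requires $Q_0(-n)Q_\ell(-n)\neq 0$ for all $n\ge m$: the nonvanishing of $Q_0$ is what makes the wronskian $W(n)$ nonzero (via the first-order recurrence \eqref{eq:13}) and hence lets Cram\'er's rule run. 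So you cannot simply invoke Lemma~\ref{lemlinrec} with the $m$ given in the statement. The paper handles this by a preliminary descent: it first shows that if Proposition~\ref{prop:1} holds for $m+1$, it holds for $m$ (by using Lemma~\ref{lem:1b} once more to express $F_{m+\ell}^{[s]}(z)$ in terms of $F_{m+j}^{[t]}(z)$, $0\le j\le\ell-1$, and the $\theta^j F$), and thereby reduces to the case where $m$ also satisfies $m > -\widehat e_i$ for all $i$. Your proposal omits this reduction entirely, and as written would break down whenever some $-\widehat e_i \ge m$.

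A few smaller imprecisions: the inhomogeneous recurrence arises by \emph{integrating} ($\int_0^z x^{-1}F_{n+j}^{[s]}(x)\,dx = F_{n+j}^{[s+1]}(z)$), not by differentiating $1/(k+n)^s$; the recurrence operator acts on the index $n$ (so the coefficients are $Q_j(-n)$, not $Q_j(-k)$); and the initial-condition matching is done at $n = m,\ldots,m+\ell-1$, not $n=1,\ldots,\ell$. Also, the general claim that ``a quotient of $G$-function coefficient sequences still has geometric house and geometric denominator growth'' is not true and not what is needed; the paper's point is the sharper one you also mention, namely that $W(n)^{-1}$ and $D_j(n)/Q_\ell(1-n)$ are \emph{themselves} coefficient sequences of $G$-functions (Lemma~\ref{lemlinrec}), which is where the geometric control comes from.
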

The constants $C_1(F)$ and $C_2(F)$ are effective and could be computed in principle. However, their values are complicated to write down and do not add much value. 

Of course the most interesting case of Proposition \ref{prop:1} is when $m = \max(1, \widehat{f}_{1}+1-\ell, \ldots, \widehat{f}_{\eta}+1-\ell)$, that is $m = \ell_0 - \ell + 1$ where $\ell_0$ was  defined in the introduction: we obtain in this way \eqref{eqBintro}. However, in the proof we shall use greater values of $m$ (see   \S \ref{sec:prop} below). Our main tool will be   a linear recurrence relation satisfied by $F_n^{[s]}(z)$.

\subsection{A linear recurrence relation satisfied by $F_n^{[s]}(z)$} \label{subsec41}

Let $Q_0$, \ldots, $Q_\ell$ and $d_j = \deg(Q_j)$ be as in Lemma \ref{lemBV}. 

\begin{lem} \label{lem:1b}
For any fixed integer $s\ge 1$, the sequence of functions $\big(F_n^{[s]}(z)\big)_{n\ge 1}$ is solution of the inhomogeneous recurrence relation
\begin{equation}\label{eq:3}
\sum_{j=0}^\ell Q_j(-n) F_{n+j}^{[s]}(z)=\sum_{j=0}^\ell\sum_{t=1}^{s-1}\nvP_{j,n,t,s} F_{n+j}^{[t]}(z) 
+ \sum_{j=0}^{\ell} z^{n+j}\nvU_{j,n,s}(\theta)F(z), \quad n\ge 1
\end{equation}
where $\nvP_{j,n,t,s}\in \OK$ and each polynomial $\nvU_{j,n,s}(X)\in \OK[X]$ has degree $\le d_j-s$.

Moreover, letting $ 
\nvU_{j,n,s}(X)=\sum_{q=0}^{d_j-s} \peu_{j,n,s,q} X^q$ the coefficients $\nvP_{j,n,t,s}$ and $\peu_{j,n,s,q}$  are polynomials in $n$, with coefficients in $\OK$ (depending on $j$, $t$, $s$, $q$), such
that
$$\deg(\nvP_{j,n,t,s}) \leq d_j + t-s \mbox{ and } \deg (\peu_{j,n,s,q}) \leq d_j - q - s.$$

 In particular:
 \begin{itemize}
 \item If $s>\mu=\max(d_0,\ldots, d_\ell)$, then $\nvU_{j,n,s}(X)=0$.
 \item If $t < s-d_j$ then $\nvP_{j,n,t,s}=0$. 
 \end{itemize}
\end{lem}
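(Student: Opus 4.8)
The plan is to derive the recurrence \eqref{eq:3} by applying the operator $L$ to the generating function identity relating $F_n^{[s]}$ to $F$, and then reading off coefficients. First I would recall that, by Lemma \ref{lemBV}, $\alpha z^{\mu-\omega}L = \sum_{j=0}^\ell z^j Q_j(\theta+j)$, and note the elementary fact that for $s\ge 1$,
$$
F_n^{[s]}(z) = \frac{1}{n}F_n^{[s-1]}(z) - \frac{1}{n}\big(\theta F_n^{[s]}(z) - F_n^{[s]}(z)\big)\cdot\frac{?}{}
$$
wait — more cleanly: since $(k+n)F_n^{[s]}(z)$ has $k$-th coefficient $A_k/(k+n)^{s-1}$, and $\theta z^{k+n} = (k+n)z^{k+n}$, we have $\theta F_n^{[s]}(z) = F_n^{[s-1]}(z)$ for $s\ge 1$, with the convention $F_n^{[0]}(z) = \sum_k A_kz^{k+n} = z^n F(z)$. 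Equivalently $F_n^{[s]}(z) = \theta^{-1}\cdots$ is awkward, so instead I work with the identity $\theta^s F_n^{[s]}(z) = z^n F(z)$, i.e.\ applying $\theta^s$ collapses the whole tower. That is the engine.

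Next I would compute $\sum_{j=0}^\ell Q_j(-n)F_{n+j}^{[s]}(z)$ by applying $\theta^s$: since $\theta^s F_{n+j}^{[s]}(z) = z^{n+j}F(z)$ does not directly see the $Q_j(-n)$, I instead observe that $\theta F_{n+j}^{[s]}(z) = F_{n+j}^{[s-1]}(z)$ and that multiplication of $F_{n+j}^{[s]}$ by $z^a$ does not shift the index in a simple way — so the right move is different: apply $L$ (via $\alpha z^{\mu-\omega}L = \sum_j z^j Q_j(\theta+j)$) to the function $G_n^{[s]}(z) := \sum_{k=0}^\infty \frac{A_k}{(k+n)^s}z^{k}$, i.e.\ $F_n^{[s]}(z) = z^n G_n^{[s]}(z)$, wait this is getting tangled.

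Let me state the clean route. Since $LF=0$, writing $L$ via Lemma \ref{lemBV} gives $\sum_{j=0}^\ell z^j Q_j(\theta+j)F(z)=0$, hence comparing $z^{k}$-coefficients, $\sum_{j=0}^\ell Q_j(k)A_{k-j}=0$ for all $k$ (with $A_i=0$ for $i<0$). The plan: start from $F_{n+j}^{[s]}(z)=\sum_k \frac{A_k}{(k+n+j)^s}z^{k+n+j}$; substitute $k\mapsto k-j$ inside, so $F_{n+j}^{[s]}(z) = \sum_k \frac{A_{k-j}}{(k+n)^s}z^{k+n}$. Then
$$
\sum_{j=0}^\ell Q_j(-n)F_{n+j}^{[s]}(z) = \sum_k \frac{z^{k+n}}{(k+n)^s}\sum_{j=0}^\ell Q_j(-n)A_{k-j}.
$$
Now write $Q_j(-n) = Q_j(k) + \big(Q_j(-n)-Q_j(k)\big)$ and use $\sum_j Q_j(k)A_{k-j}=0$. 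Since $Q_j(-n)-Q_j(k)$ is divisible by $(-n-k)=-(k+n)$, it equals $-(k+n)R_j(n,k)$ for a polynomial $R_j$ with integer-field coefficients of degree $d_j-1$ in each relevant variable; iterating, $(k+n)^s$ in the denominator gets cancelled up to $s$ times, producing terms with $A_{k-j}/(k+n)^t$ for $1\le t\le s$ (the $F^{[t]}$ terms, shifted back by $k\mapsto k+j$, which re-injects the numbers $\nvP_{j,n,t,s}$) and, once the denominator is fully cleared, polynomial-times-$A_{k-j}$ terms which reassemble as $z^{n+j}\nvU_{j,n,s}(\theta)F(z)$. Tracking degrees: each division by $(k+n)$ lowers the polynomial degree by one, so after $s$ steps $\nvU_{j,n,s}$ has degree $\le d_j-s$ (zero if $s>d_j$, giving the first bullet since $\mu=\max d_j$), and $\nvP_{j,n,t,s}$ picks up degree $d_j$ minus the number of divisions already spent reaching level $t$, i.e.\ $\le d_j+t-s$, vanishing if $t<s-d_j$. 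Integrality is clear since the $Q_j$ have coefficients in $\OK$ and all operations are $\Z$-linear combinations and finite differences.

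The main obstacle is bookkeeping: carrying out the division of $Q_j(-n)-Q_j(k)$ by $(k+n)$ repeatedly while keeping precise control of the degrees in $n$ of the resulting coefficients $\nvP_{j,n,t,s}$ and $\peu_{j,n,s,q}$, and checking that the leftover polynomial pieces genuinely recombine into $z^{n+j}\nvU_{j,n,s}(\theta)F(z)$ using $\theta^q z^{k+n}=(k+n)^q z^{k+n}$ together with the re-indexing $k\mapsto k+j$. I expect this to be a careful but routine induction on $s$: the base case $s=1$ is one division, and the inductive step applies the $s=1$ identity to each $F^{[s-1]}$ term produced, after which the degree and vanishing claims follow by the inductive hypothesis.
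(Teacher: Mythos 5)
Your proposal is correct and takes a genuinely different route from the paper. The paper proves the lemma by starting from $0 = \int_0^z x^{n-1}LF(x)\,dx$, carrying out repeated integrations by parts (with respect to $\theta$), and then inducting on $s$ via $\int_0^z \frac{1}{x}F^{[s]}_{n+j}(x)\,dx = F^{[s+1]}_{n+j}(z)$. You work instead at the level of Taylor coefficients: from $\alpha z^{\mu-\omega}L = \sum_j z^j Q_j(\theta+j)$ and $LF=0$ you extract $\sum_{j=0}^\ell Q_j(k)A_{k-j}=0$, write
$$\sum_{j=0}^\ell Q_j(-n)F_{n+j}^{[s]}(z) = \sum_{k\ge 0}\frac{z^{k+n}}{(k+n)^s}\sum_{j=0}^\ell\bigl(Q_j(-n)-Q_j(k)\bigr)A_{k-j},$$
and exploit the divisibility of $Q_j(-n)-Q_j(k)$ by $k+n$. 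This is a clean idea, and in fact it can be pushed further than you indicate: rather than ``iterating'' the division or running an induction on $s$, expand $Q_j(-n)-Q_j(k)=\sum_{q=1}^{d_j}c_{j,q}(n)(k+n)^q$ in one shot (via $k=(k+n)-n$ and binomial expansion), with $c_{j,q}(n)\in\OK[n]$ of degree $\le d_j-q$; then the terms with $1\le q\le s-1$ give $c_{j,s-t}(n)F_{n+j}^{[t]}(z)$ (so $\nvP_{j,n,t,s}=c_{j,s-t}(n)$ has degree $\le d_j+t-s$ and vanishes for $t<s-d_j$), and the terms with $q\ge s$ give $\sum_{q\ge s}c_{j,q}(n)\theta^{q-s}\bigl(z^{n+j}F(z)\bigr)=z^{n+j}\sum_{q\ge s}c_{j,q}(n)(\theta+n+j)^{q-s}F(z)$, yielding $\nvU_{j,n,s}$ of $\theta$-degree $\le d_j-s$, vanishing for $s>d_j$, with $\peu_{j,n,s,h}$ of degree $\le d_j-s-h$ after expanding $(\theta+n+j)^{q-s}$. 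So no induction is needed, and your suggested inductive step (applying the $s=1$ identity to the $F^{[s-1]}$ pieces) would not even quite run, since the $s=1$ identity has no $F^{[t]}$ terms on the right. The only point you must not gloss is the conjugation $\theta^m\bigl(z^{n+j}g\bigr)=z^{n+j}(\theta+n+j)^m g$ needed to bring the residual terms into the form $z^{n+j}\nvU_{j,n,s}(\theta)F(z)$ — the extra powers of $(n+j)$ this introduces are precisely accounted for in the degree bound on $\peu_{j,n,s,h}$. In summary: your coefficient-level argument is a legitimate and arguably more transparent alternative to the paper's integration-by-parts induction, at the cost of somewhat more explicit combinatorial bookkeeping of the $c_{j,q}(n)$.
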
 

\begin{proof} We prove \eqref{eq:3} by induction on $s\ge 1$, and this will provide expressions for the various involved quantities. 
In the case $s=1$, we write $Q_j(x)=\sum_{m=0}^{d_j} \rho_{j,m} x^m$ with $\rho_{j,m}\in\OK$ for any $j$, $m$. For any integer $n\ge 1$, we have
\begin{align*}
0=& \int_0^z x^{n-1} LF(x) dx = \sum_{j=0}^\ell\sum_{m=0}^{d_j} \rho_{j,m} \int_0^z x^{n+j-1}(\theta+j)^mF(x) dx
\\
&=\sum_{j=0}^\ell\sum_{m=0}^{d_j} \rho_{j,m} \sum_{p=0}^m \binom{m}{p}j^{m-p} \int_0^z x^{n+j-1}\theta^pF(x) dx
\end{align*}
because $(\theta+j)^m=\sum_{p=0}^m \binom{m}{p}j^{m-p} \theta^p.$ 
After successive integrations by parts (with respect to $\theta$;  all the integrated parts vanish at $x=0$ because $n\ge 1$), we see that 
\begin{multline*}
\int_0^z x^{n+j-1}\theta^pF(x) dx \\
= z^{n+j}\sum_{q=0}^{p-1}(-1)^{p-q-1}(n+j)^{p-q-1} \theta^{q}F(z) +(-1)^{p}(n+j)^p \int_0^z x^{n+j-1}F(x) dx.
\end{multline*} 
Since $\int_0^z x^{n+j-1}F(x) dx = F_{n+j}^{[1]}(z)$ we deduce that 
\begin{multline*}
0=\sum_{j=0}^\ell F_{n+j}^{[1]}(z)\sum_{m=0}^{d_j} \rho_{j,m} \sum_{p=0}^m \binom{m}{p}(-1)^{p}(n+j)^pj^{m-p}
\\
+
\sum_{j=0}^\ell z^{n+j}\sum_{m=0}^{d_j} \rho_{j,m} \sum_{p=0}^m \binom{m}{p}j^{m-p}\sum_{q=0}^{p-1}(-1)^{p-q-1}(n+j)^{p-q-1} \theta^{q}F(z).
\end{multline*}
We now set  for $0\leq q \leq d_j-1$
\begin{equation} \label{eq:15}
\peu_{j,n,1,q}(X)=-\sum_{m=0}^{d_j} \rho_{j,m} \sum_{p=q+1}^m \binom{m}{p}j^{m-p} (-1)^{p-q}(n+j)^{p-q-1} ,
\end{equation}
which is a  polynomial  in $n$ with  coefficients in $\OK$ and degree at most $d_j-q-1$. Therefore $
\nvU_{j,n,1}(X)=\sum_{q=0}^{d_j-1} \peu_{j,n,1,q} X^q 
$
has   degree $\le d_j-1 $ and   coefficients in $\OK$. 
Since   
$$
\sum_{m=0}^{d_j} \rho_{j,m} \sum_{p=0}^m \binom{m}{p}(-1)^{p}(n+j)^pj^{m-p} = \sum_{m=0}^{d_j} \rho_{j,m} (-n)^m= Q_j(-n),
$$
we then deduce that 
$$
\sum_{j=0}^\ell Q_j(-n) F_{n+j}^{[1]}(z)=\sum_{j=0}^{\ell} z^{n+j}\nvU_{j,n,1}(\theta)F(z)
$$
for any integer $n\ge 1$: this proves \eqref{eq:3} for $s=1$.

\medskip

Let us assume that Lemma \ref{lem:1b} holds  for some $s\ge 1$. Then, since  $\int_0^z\frac{1}{x}F_{n+j}^{[s]}(x) dx=F_{n+j}^{[s+1]}(z)$, we have 
\begin{equation*}
\sum_{j=0}^\ell Q_j(-n) F_{n+j}^{[s+1]}(z) 
=\sum_{j=0}^\ell\sum_{t=1}^{s-1}\nvP_{j,n,t,s} F_{n+j}^{[t+1]}(z)
+ \sum_{j=0}^{\ell} \int_0^z x^{n+j-1}\nvU_{j,n,s}(\theta)F(x) dx.
\end{equation*}
Now recall from the case $s=1$ that 
\begin{equation*}
\int_0^z x^{n+j-1}\theta^qF(x) dx 
= z^{n+j}\sum_{h=0}^{q-1}(-1)^{q-h-1}(n+j)^{q-h-1} \theta^{h}F(z) +(-1)^{q}(n+j)^q F_{n+j}^{[1]}(z).
\end{equation*}
Hence, 
\begin{multline*}
\sum_{j=0}^\ell Q_j(-n) F_{n+j}^{[s+1]}(z) 
\\
=\sum_{j=0}^\ell \sum_{t=2}^{s} \nvP_{j,n,t-1,s} F_{n+j}^{[t]}(z)
+ \sum_{j=0}^{\ell} F_{n+j}^{[1]}(z) \Big(\sum_{q=0}^{d_j-s} (-1)^{q}(n+j)^q\peu_{j,n,s,q} \Big)
\\
+\sum_{j=0}^{\ell} z^{n+j}\sum_{q=0}^{d_j-s} \peu_{j,n,s,q}\sum_{h=0}^{q-1}(-1)^{q-h-1}(n+j)^{q-h-1} \theta^{h}F(z).
\end{multline*}
Eq.~\eqref{eq:3} follows for $s+1\ge 2$ with
$$
\nvP_{j,n,t,s+1}=
\begin{cases}
\displaystyle \sum_{q=0}^{d_j-s} (-1)^{q}(n+j)^q \peu_{j,n,s,q} \qquad \textup{for} \quad t=1
\\
\\
\nvP_{j,n,t-1,s} \qquad \textup{for} \quad 2\le t \le s
\end{cases}
$$
and 
$$
\nvU_{j,n,s+1}(X)=\sum_{h=0}^{d_j-s-1}\Big(\sum_{q=h+1}^{d_j-s} (-1)^{q-h-1}(n+j)^{q-h-1}\peu_{j,n,s,q}\Big) X^h.
$$
In particular, 
$$
\peu_{j,n,s+1,h}=\sum_{q=h+1}^{d_j-s}(-1)^{q-h-1}(n+j)^{q-h-1}\peu_{j,n,s,q}, \quad 0\le h\le d_j-s-1.
$$
This completes the proof of Lemma~\ref{lem:1b},  with explicit formulas. 
\end{proof}

\subsection{Proof of Proposition~\ref{prop:1}}\label{sec:prop}

Let $\K$, $F$, $L$  be as in the statement of Proposition~\ref{prop:1}, and $Q_0$, \ldots, $Q_\ell$ be as in Lemma \ref{lemBV}.

To begin with, we claim that if $m$ satisfies \eqref{eqhypm} and Proposition~\ref{prop:1} holds for $m+1$, then  Proposition~\ref{prop:1} holds for $m$. Indeed Lemma \ref{lemBV} asserts that the integer roots of $Q_\ell(-X+\ell)$ are  $\widehat{f}_{1}$, \ldots, $\widehat{f}_{\eta}$
 so that \eqref{eqhypm} yields $Q_\ell(-m)\neq 0$. By induction on  $s\geq 0$, Lemma~\ref{lem:1b} implies that $F_{m+\ell}^{[s]}(z)$ is  a linear combination of $F_{m+j}^{[t]}(z)$ ($0\leq j \leq \ell-1$, $1\leq t \leq s$, with coefficients in $\K$) and $\theta^j F(z)$ ($0\leq j \leq \mu-1$, with coefficients in $\K[z]$ of degree at most $m+\ell$). Then for any $n\geq 1$ we may replace all $F_{m+\ell}^{[s]}(z)$ with this expression in the expansion  \eqref{eq:406} provided by Proposition~\ref{prop:1} with $m+1$. This gives an expansion of the form \eqref{eq:406} with $m$, and the new values of $ \ppP_{j,t,s,n}$ and $ \qqQ_{j,s,n}(z)$ are easily proved to satisfy also $(ii)$ and $(iii)$. This concludes the proof of the claim.

We denote by $\widehat{e}_1, \ldots, \widehat{e}_{\kappa}$ the integer exponents of $L$ at $0$; we have $\kappa\ge 1$  because $LF(z)=0$. Recall that  $\widehat{f}_{1}$, \ldots, $\widehat{f}_{\eta}$ are  the integer exponents of $L$ at  $\infty$, with $\eta=0$ if there isn't any. The claim shows that in proving Proposition~\ref{prop:1} we may assume that $m$ is large; we shall assume from now on that 
\begin{equation} \label{eqhypmgros}
m > - \widehat{e}_{i} \mbox{ and } m > \widehat{f}_{j} - \ell  \; \mbox{ for all } \;1\leq i\le \kappa, \; 1\le j \leq \eta.
\end{equation}
Then we are in the setting of \S \ref{subseclinrec}; in particular, 
  $Q_0(-n)\neq 0$ and $Q_\ell(-n)\neq 0$ for any integer  $n\geq m$.
 As in \S \ref{subseclinrec} we denote by $(u_1(n))_{n\geq m}$, \ldots,  $(u_\ell(n))_{n\geq m}$ a basis of the space of solutions of the homogeneous recurrence relation $\sum_{j=0}^\ell Q_j(-n) U(n+j) = 0$, $n\geq m$, such that $u_j(n)\in\K$ for any $j$ and any $n$. We also define $W(n)$ and $D_j(n)$ as in  \S \ref{subseclinrec}  (see \eqref{eq:8b} and \eqref{eqdjn}). Lemma \ref{lemlinrec} shows that  $\sum_{n = m}^{\infty} \frac{z^n}{W(n)}$,  $\sum_{n=m}^\infty u_j(n)z^n$ and  $\sum_{n=m+1}^\infty \frac{D_j(n)}{Q_\ell(1-n)}z^n$ (with $1\leq j \leq \ell$) are  $G$-functions. Therefore letting $\delta_n >0$ denote a common denominator of the algebraic  numbers  $\frac1{W(k)}$, $\frac{D_j(k)}{ Q_\ell(1-k)}$ ($m+1\leq k \leq n$, $1\leq j \leq \ell$), $u_j(k)$ ($m\leq k \leq n$, $1\leq j \leq \ell$), we have
\begin{equation}\label{eqmajodelta}
\limsup_{n\to\infty} \delta_n^{3/n}\leq C_2(F)
\end{equation}
where $C_2(F)$ is a constant that depends only on $F$. Since $\delta_n\ge 1$, we have $C_2(F)\ge 1$.  For the same reason we have  
\begin{equation}\label{eqmajonorme}
\max_{m+1\leq k \leq n} \max \Big( | u_j(k)|, \, \frac1{| W(k) |} , \, \frac{|  D_j(k)| }{|  Q_\ell(1-k)| } \Big) \leq  C_1(F)^{n(1+o(1))}
\end{equation}
as $n\to\infty$, for any $j\leq \ell$, where $C_1(F)$ is a constant that depends only on $F$.  Increasing $C_1(F)$ if necessary, we may assume that $C_1(F)\geq 1$.

By induction on $s\geq 1$ we shall construct algebraic numbers 
$\ppP_{j,t,s,n}\in \K$ and   polynomials $\qqQ_{j,s,n}(z) \in \K[z]$ of degree at most  $\ell+n-1$ such that for any $n\ge 1$, 
\begin{equation}\label{eq:16}
F_n^{[s]}(z)=\sum_{t=1}^{s}\sum_{j=1}^{\ell+m-1} \ppP_{j,t,s,n}F_j^{[t]}(z) 
+ \sum_{j=0}^{\mu-1} \qqQ_{j,s,n}(z) \theta^j  F(z)
\end{equation}
with the additional properties 
\begin{equation}\label{eqdenoms}
d^s  \delta_{n+s(\ell-1)}^{3s} \ppP_{j,t,s,n'}\in \OK \mbox{ and }d^s  \delta_{n+s(\ell-1)}^{3s}  \qqQ_{j,s,n'}(z) \in \OK[z] \mbox{ for any } n'\leq n
\end{equation}
where $d \geq 1$ depends only on $F$  but neither on $n$ nor on $s$. Together with \eqref{eqmajodelta} this implies assertion $(iii)$ of Proposition~\ref{prop:1}; our construction (in which all formulas are explicit) yields also assertion $(ii)$ using  \eqref{eqmajonorme}.

\medskip

The construction of $\ppP_{j,t,s,n} $ and    $\qqQ_{j,s,n}(z) $ is trivial   if $n \leq \ell + m - 1$: it is enough to choose $\qqQ_{s,j,n}(z)=0$ for any $s, j$, and $\ppP_{j,t,s,n}$ equal to 1 if $j=n$ and $t=s$, equal to 0 otherwise. Therefore we may restrict now to the case $n \geq \ell +m$.

\medskip

We shall prove at the same time the initial step ($s=1$) and the inductive step. With this aim in mind we let $s\geq 0$ and we shall prove the property with $s+1$ (i.e., construct  explicitly $\ppP_{j,t,s+1,n} $ and    $\qqQ_{j,s+1,n}(z)$ such that \eqref{eq:16} and \eqref{eqdenoms} hold); if $s=0$ the proof is unconditional, whereas if $s\geq 1$ the property with $s$ will be used.

\medskip

By Lemma~\ref{lem:1b}, 
the sequence of functions $\big(F_n^{[s+1]}(z)\big)_{n\ge 1}$ is solution of the inhomogeneous recurrence relation
\begin{equation}\label{eq:3b}
\sum_{j=0}^\ell Q_j(-n) F_{n+j}^{[s+1]}(z)=g_{s+1}(n), \quad n \ge 1, 
\end{equation}
with 
\begin{equation}\label{eq55bis}
g_{s+1}(n)=\sum_{j=0}^\ell\sum_{t=1}^{s}\nvP_{j,n,t,s+1} F_{n+j}^{[t]}(z) 
+ \sum_{j=0}^{\ell} z^{n+j}\nvU_{j,n,s+1}(\theta)F(z)
\end{equation}
where $\nvP_{j,n,t,s+1}\in \OK$ and each polynomial $\nvU_{j,n,s+1}(X)\in \OK[X]$ has degree $\le d_j-s-1$.
Lemma~\ref{lemlinrec} shows that   there exist some functions $\ddjspu(z)$ such that for all $n\ge m$, 
\begin{equation}\label{eq:11b}
F_n^{[s+1]}(z)=\sum_{j=1}^\ell \ddjspu(z) u_{j}(n)+\sum_{j=1}^\ell \Big(\sum_{k=m+1}^{n} \frac{\Delta_{s+1,j}(k)}{W(k)}\Big)u_j(n),
\end{equation}
with (using \eqref{eq55bis})
\begin{equation}\label{eq56bis}
\frac{\Delta_{s+1,j}(k)}{W(k)} 
 = \frac{ D_j(k)}{W(k) Q_\ell(1-k)} \Big( \sum_{q=0}^\ell\sum_{t=1}^{s}\nvP_{q,k-1,t,s+1} F_{k-1+q}^{[t]}(z) 
+ \sum_{q=0}^{\ell} z^{k+q-1}\nvU_{q,k-1,s+1}(\theta)F(z)\Big) .
 \end{equation}
The point here is that $F_n^{[s+1]}(z)$,  $g_{s+1}(n)$, $\Delta_{s+1,j}(k)$ depend on $z$, whereas $Q_j(-n)$ does not: the homogeneous recurrence relation \eqref{eq:12b} and  $u_j(n)$, $W(k)$,  $D_j(k)$ do not depend on $z$.
The functions $\ddjspu(z)$ can be determined as follows. We use \eqref{eq:11b} for $n=m, \ldots, m+\ell-1$ so that the linear system of $\ell$ equations  
$$
\sum_{j=1}^\ell \ddjspu(z) u_{j}(n) = F_n^{[s+1]}(z)-\sum_{h=1}^\ell \Big(\sum_{k=m+1}^{n} \frac{\Delta_{s+1,h}(k)}{W(k)}\Big)u_h(n)
$$
is solved by Cram\'er's rule. Indeed, the determinant of the system is $W(m)$   and accordingly a non-zero element of $\K$  by Lemma~\ref{lemlinrec}. Therefore, for any $j$  there   exist some $\alcra\in \K$ (independent of $s$) such that
\begin{equation}\label{eq:17}
\ddjspu(z)=\sum_{p=m}^{\ell+m-1} \alcra \left(F_p^{[s+1]}(z)-\sum_{h=1}^\ell \Big(\sum_{k=m+1}^{p} 
\frac{\Delta_{s+1,h}(k)}{W(k)}\Big)u_h(p)\right).
\end{equation}
Using this equality and  \eqref{eq56bis} into  \eqref{eq:11b} yields, for any $n\ge  m$: 
\begin{equation}\label{eqsomme}
F_n^{[s+1]}(z)= c_1 + c_2 + c_3 + c_4 + c_5, 
\end{equation}
where
\begin{eqnarray*}
c_1 &=& \sum_{j=1}^\ell u_j(n) \sum_{p=m}^{\ell+m-1} \alcra F_p^{[s+1]}(z),
\\
c_2  &=& -\sum_{j=1}^\ell u_j(n) \sum_{p=m}^{\ell+m-1}  \alcra \sum_{h=1}^\ell u_h(p)  \sum_{k=m+1}^{p} \sum_{q=0}^\ell\sum_{t=1}^{s}  \frac{D_h(k)}{W(k)Q_\ell(1-k )} \nvP_{q,k-1,t,s+1} F_{k-1+q}^{[t]}(z) ,
\\
c_3  &=& - \sum_{j=1}^\ell u_j(n)  \sum_{p=m}^{\ell+m-1}  \alcra \sum_{h=1}^\ell u_h(p)  \sum_{k=m+1}^{p}    \frac{D_h(k)}{W(k) Q_\ell(1-k)} \sum_{q=0}^\ell z^{k-1+q} \nvU_{q,k-1,s+1}(\theta)F(z),
\\
c_4  &=&  \sum_{j=1}^\ell u_j(n) \sum_{k=m+1}^{n} \sum_{q=0}^\ell  \sum_{t=1}^{s}   \frac{D_j(k)}{W(k)Q_\ell(1-k)} \nvP_{q,k-1,t,s+1} F_{k-1+q}^{[t]}(z)  ,
\\
c_5  &=&   \sum_{j=1}^\ell u_j(n) \sum_{k=m+1}^{n}   \frac{D_j(k)}{W(k)Q_\ell(1-k)}  \sum_{q=0}^\ell z^{k-1+q} \nvU_{q,k-1,s+1}(\theta)F(z).
\end{eqnarray*}
If $s=0$ then $c_2 $ and $c_4 $ vanish; otherwise we apply \eqref{eq:16} with each $t\in \{1,\ldots,s\}$ and get
\begin{eqnarray*}
c_2 &=&  -\sum_{j=1}^\ell u_j(n) \sum_{p=m}^{\ell+m-1}  \alcra \sum_{h=1}^\ell u_h(p)  \sum_{k=m+1}^{p} \sum_{q=0}^\ell\sum_{t=1}^{s}  \frac{D_h(k)}{W(k)Q_\ell(1-k )} \nvP_{q,k-1,t,s+1}
\\
&&\Big(  \sum_{t'=1}^{t}\sum_{j'=1}^{\ell+m-1} \ppP_{j',t',t,k-1+q}F_{j'}^{[t']}(z) 
+ \sum_{j'=0}^{\mu-1} \qqQ_{j',t,k-1+q}(z) \theta^{j'}  F(z)
\Big) , 
\\
c_4 &=&   \sum_{j=1}^\ell u_j(n) \sum_{k=m+1}^{n} \sum_{q=0}^\ell  \sum_{t=1}^{s}   \frac{D_j(k)}{W(k)Q_\ell(1-k)} \nvP_{q,k-1,t,s+1}  
\\
&&\Big(  \sum_{t'=1}^{t}\sum_{j'=1}^{\ell+m-1} \ppP_{j',t',t,k-1+q}F_{j'}^{[t']}(z) 
+ \sum_{j'=0}^{\mu-1} \qqQ_{j',t,k-1+q}(z) \theta^{j'}  F(z) 
\Big).
\end{eqnarray*}
We shall now define the coefficients $ \ppP_{p,t',s+1,n} $ and $ \qqQ_{j',s+1,n}(z)$ in such a way that \eqref{eqsomme} reads
$$
F_n^{[s+1]}(z)=\sum_{t'=1}^{s+1}\sum_{j'=1}^{\ell+m-1} \ppP_{j',t',s+1,n}F_{j'}^{[t']}(z) 
+ \sum_{j'=0}^{\mu-1} \qqQ_{j',s+1,n}(z)\theta^{j'} F(z).
$$
Taking $c_1$ into account we let 
$$\ppP_{p, s+1, s+1, n} = \left\{ \begin{array}{l} 0 \mbox{ if } 1 \leq p \leq m-1, \\ 
\sum_{j=1}^\ell \alcra u_j(n)  \mbox{ if } m \leq p \leq \ell+m-1.
\end{array}\right.$$
If $s\geq 1$ then considering $c_2 $ and $c_4$ we let, for any $t'$, $j'$ such that $1\leq t' \leq s$ and $1\leq j'\leq \ell+m-1$:
\begin{multline*}
\ppP_{j', t',   s+1, n} 
=\\
  -\sum_{j=1}^\ell u_j(n) \sum_{p=m}^{\ell+m-1}  \alcra \sum_{h=1}^\ell u_h(p)  \sum_{k=m+1}^{p} \sum_{q=0}^\ell\sum_{t=t'}^{s}  \frac{D_h(k)}{W(k)Q_\ell(1-k )} \nvP_{q,k-1,t,s+1}  \ppP_{j',t',t,k-1+q}
\\
+   \sum_{j=1}^\ell u_j(n) \sum_{k=m+1}^{n} \sum_{q=0}^\ell  \sum_{t=t'}^{s}   \frac{D_j(k)}{W(k)Q_\ell(1-k)} \nvP_{q,k-1,t,s+1}  \ppP_{j',t',t,k-1+q}.
\end{multline*}
Now recall that we assume $n\geq \ell+m$. Then in each term of the sum we have $k-1+q\leq n+\ell-1$ so that \eqref{eqdenoms} yields $d^s \delta_{n+(s+1)(\ell-1)}^{3s}  \ppP_{j',t',t,k-1+q} \in\OK$. By definition of $\delta_{n+(s+1)(\ell-1)}$ we obtain
 in both cases ($s=0$ and $s\geq 1$) that
$$d^{s+1} \delta_{n+(s+1)(\ell-1)}^{3(s+1)} \ppP_{j', t',   s+1, n'} \in \OK \mbox{ for any } n'\leq n, \mbox{  any }1 \leq j' \leq \ell+m-1 \mbox{ and any } 1\leq t'\leq s+1,$$
where $d \geq 1$ is chosen (in terms of $F$  only, independently from $n$ and $s$) such that
$$d  \alcra   u_h(p) \in \OK \mbox{ for any } m \leq p \leq  \ell+m-1 \mbox{ and any }  1\leq j,h \leq \ell.$$

On the other hand, writing  $\nvU_{j,k,s+1}(X) = \sum_{q=0}^{\mu-1} \peu_{j,k,s+1,q} X^q$ (so that $\peu_{j,k,s+1,q} = 0$ if $\deg \nvU_{j,k,s+1} < q \leq \mu-1$) and considering the coefficients of $\theta^{j'}$ in $c_3$, $c_5$, $c_2$ and $c_4$ we let for any $j'$ with $0\leq j' \leq \mu-1$:
\begin{align*}
 &\qqQ_{j',s+1,n}(z)
 =   - \sum_{j=1}^\ell u_j(n)  \sum_{p=m}^{\ell+m-1}  \alcra \sum_{h=1}^\ell u_h(p)  \sum_{k=m+1}^{p}    \frac{D_h(k)}{W(k) Q_\ell(1-k)} \sum_{q=0}^\ell z^{k-1+q} \peu_{q,k-1,s+1, j'}
  \\
&\quad +   \sum_{j=1}^\ell u_j(n) \sum_{k=m+1}^{n}   \frac{D_j(k)}{W(k)Q_\ell(1-k)}  \sum_{q=0}^\ell z^{k-1+q} \peu_{q,k-1,s+1, j'}
\\
&\quad -    \sum_{j=1}^\ell u_j(n) \sum_{p=m}^{\ell+m-1}  \alcra \sum_{h=1}^\ell u_h(p)  \sum_{k=m+1}^{p} \sum_{q=0}^\ell\sum_{t=1}^{s}  \frac{D_h(k)}{W(k)Q_\ell(1-k )} \nvP_{q,k-1,t,s+1}
 \qqQ_{j',t,k-1+q}(z)
\\
&\quad +     \sum_{j=1}^\ell u_j(n) \sum_{k=m+1}^{n} \sum_{q=0}^\ell  \sum_{t=1}^{s}   \frac{D_j(k)}{W(k)Q_\ell(1-k)} \nvP_{q,k-1,t,s+1}   \qqQ_{j',t,k-1+q}(z).
\end{align*}
Then we have
$$d^{s+1} \delta_{n+(s+1)(\ell-1)}^{3(s+1)}  \qqQ_{j',s+1,n'}(z) \in \OK[X] \mbox{ for any } n'\leq n  \mbox{ and  } \deg (\qqQ_{j',s+1,n})  \leq n + (s+1)(\ell-1)$$
 for any $ j'$.

At last assertion $(ii)$ of Proposition \ref{prop:1} follows also from these formulas, using Lemma \ref{lem:1b} and \eqref{eqmajonorme}.

\section{Proof of Theorem~\ref{theo:1}} \label{secpreuvethun}

In this section, we introduce a power series that will play the usual role of an auxiliary function in transcendance theory.  We denote by $R>0$ the  radius of convergence of $F(z)=\sum_{k=0}^\infty A_k z^k$.

\medskip

Let $r, S\ge 0 $ be integers such that $r\le S$. Let us define the following auxiliary series, for $n\geq 0$: 
\begin{align*}
T_{S,r,n}(z)& = n!^{S-r}\sum_{k=0}^\infty \frac{k(k-1)\cdots (k-rn+1)}{(k+1)^S(k+2)^S\cdots (k+n+1)^S} \,A_k \,z^{-k}
\\ &= n!^{S-r}\sum_{k=0}^\infty \frac{(k-rn+1)_{rn}}{(k+1)_{n+1}^S}\, A_k \,z^{-k}.
\end{align*}
It converges for any $z$ such that $\vert z\vert >1/R.$ If $A_k=1$ for all $k\ge0$, we recover  the series $N_n(z)$ in \cite{ribordeaux}, up to a factor of $z.$

As in \S \ref{secpreuveprinc} we  let $\theta =  z \frac{d}{dz}$, and as in the introduction we let $\lz = \max(\ell, \widehat f_1,\ldots,\widehat f_\eta)$ where 
$\widehat{f}_{1}$, \ldots, $\widehat{f}_{\eta}$ are the integer exponents of $L$ at  $\infty$  and $\ell$ is defined as in Lemma \ref{lemBV}.

\subsection{A linear form}
We now make the connection between $T_{S,r,n}(z)$ and the functions $F_{n}^{[s]}(z)$.

\begin{lem} \label{lem600} Let us assume that $n\ge \lz$. There exist some polynomials $C_{u,s,n}(X)\in \K[X]$ and $\widetilde{C}_{u,n}(X)\in \K[X]$ of respective 
degrees $\le n+1$ and $\le  n+1 +S(\ell-1)$ such that, 
for any $z$ such that $\vert z\vert >1/R$, we have
$$
T_{S,r,n}(z)=\sum_{u=1}^{\lz}\sum_{s=1}^S C_{u,s,n}(z) F_{u}^{[s]}(1/z)+  \sum_{u=0}^{\mu-1} \widetilde{C}_{u,n}(z) z^{-S(\ell-1)} (\theta^uF)(1/z).
$$
\end{lem}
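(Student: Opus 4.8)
The plan is to express $T_{S,r,n}(z)$ directly as a combination of the functions $F_m^{[s]}(1/z)$ for $m \geq 1$, and then to use Proposition~\ref{prop:1} to reduce the range of indices $m$ down to $1 \leq m \leq \lz$. First I would perform a partial fraction decomposition with respect to the variable $k$: the rational function
$$
\frac{k(k-1)\cdots(k-rn+1)}{(k+1)^S(k+2)^S\cdots(k+n+1)^S}
$$
is a proper rational fraction in $k$ (since $rn \leq Sn < S(n+1)$ for $r \leq S$, with a small check when $r = S$ using the factor $n!^{S-r}$, or more simply because the numerator has degree $rn \leq Sn$ and the denominator degree $S(n+1)$), so it decomposes as $\sum_{m=1}^{n+1} \sum_{s=1}^S \frac{c_{m,s,n}}{(k+m)^s}$ for suitable $c_{m,s,n} \in \Q$, with an extra polynomial part only if $r=S$, which contributes the $(\theta^u F)(1/z)$-type terms. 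Multiplying by $n!^{S-r} A_k z^{-k}$ and summing over $k$, and recalling that $F_m^{[s]}(1/z) = \sum_{k=0}^\infty \frac{A_k}{(k+m)^s} z^{-(k+m)}$, I get
$$
T_{S,r,n}(z) = n!^{S-r}\sum_{m=1}^{n+1}\sum_{s=1}^{S} c_{m,s,n}\, z^{m} F_m^{[s]}(1/z) + (\text{polynomial part}),
$$
so at this stage $T_{S,r,n}(z)$ is a $\K[z]$-linear combination of $F_m^{[s]}(1/z)$ with $1 \leq m \leq n+1$, $1 \leq s \leq S$, plus a term of the form $\sum_j R_j(z)(\theta^j F)(1/z)$ coming from any polynomial part of the partial fraction expansion (present when $r=S$).

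Next I would invoke Proposition~\ref{prop:1} (in the case $m = \lz - \ell + 1$, which yields exactly \eqref{eqBintro}): for each $m$ with $\lz < m \leq n+1$ and each $s$, we may rewrite $F_m^{[s]}(w)$ as $\sum_{t=1}^s \sum_{j=1}^{\lz} \kappa_{j,t,s,m} F_j^{[t]}(w) + \sum_{j=0}^{\mu-1} K_{j,s,m}(w)(\theta^j F)(w)$ with $\kappa_{j,t,s,m} \in \K$ and $K_{j,s,m} \in \K[w]$ of degree at most $m + s(\ell-1)$. Substituting $w = 1/z$ and collecting terms, I obtain the desired shape
$$
T_{S,r,n}(z) = \sum_{u=1}^{\lz}\sum_{s=1}^S C_{u,s,n}(z) F_u^{[s]}(1/z) + \sum_{u=0}^{\mu-1} \widetilde C_{u,n}(z) z^{-S(\ell-1)}(\theta^u F)(1/z),
$$
where the $C_{u,s,n}$ and $\widetilde C_{u,n}$ are polynomials in $z$ (the factor $z^{-S(\ell-1)}$ is pulled out to clear the negative powers of $z$ coming from $K_{j,s,m}(1/z)$, since $\deg K_{j,s,m} \leq m + s(\ell-1) \leq (n+1) + S(\ell-1)$ for $m \leq n+1$, $s \leq S$).

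The degree bookkeeping is the step requiring the most care. The polynomials $C_{u,s,n}(z)$ acquire a factor $z^m$ from the term $z^m F_m^{[s]}(1/z)$ and, after the substitution from Proposition~\ref{prop:1}, the coefficients $\kappa_{j,t,s,m}$ contribute no extra $z$, so $\deg C_{u,s,n} \leq \max_m (m) \leq n+1$; I would check that no term of larger degree in $z$ survives, in particular that the polynomial part of the partial fraction decomposition (when $r=S$) has $z$-degree compatible with the bound $n+1$ — here it is useful that the polynomial part in $k$ has low degree and, after the integration-by-parts identities implicit in writing things via $\theta^u F$, produces polynomials in $z$ of degree $O(n)$. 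For $\widetilde C_{u,n}$, the degree $n+1+S(\ell-1)$ comes precisely from $\deg K_{j,s,m} \leq m + s(\ell-1) \leq n+1 + S(\ell-1)$ together with the cleared factor $z^{-S(\ell-1)}$. The hypothesis $n \geq \lz$ is what guarantees that the range $\lz < m \leq n+1$ over which we apply Proposition~\ref{prop:1} makes sense and that the indices $1 \leq j \leq \lz = \ell + m_0 - 1$ appearing there match the target range $1 \leq u \leq \lz$. I expect the main obstacle to be precisely this degree-tracking together with correctly identifying and controlling the polynomial part of the partial fraction expansion in the boundary case $r = S$; the algebraic manipulations themselves are routine once the decomposition is set up.
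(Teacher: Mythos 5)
Your proposal follows the same route as the paper: partial-fraction decomposition in $k$ to write $T_{S,r,n}(z)$ as a $\K[z]$-combination of $F_m^{[s]}(1/z)$ for $1\le m\le n+1$, then Proposition~\ref{prop:1} with $m=\lz-\ell+1$ (i.e.\ \eqref{eqBintro}) to collapse the range $\lz<m\le n+1$, producing the $\widetilde C_{u,n}(z)$ from the $\qqQ_{u,s,m}(1/z)$ factors. The degree bookkeeping you sketch is the one the paper carries out, and your bound $\deg K_{j,s,m}\le m+s(\ell-1)\le (n+1)+S(\ell-1)$ is exactly what justifies pulling out $z^{-S(\ell-1)}$.

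The one genuine flaw is your aside about a ``polynomial part'' of the partial-fraction expansion when $r=S$. This is internally inconsistent with your own degree count and it is wrong: the numerator $k(k-1)\cdots(k-rn+1)$ has degree $rn\le Sn$, while the denominator has degree $S(n+1)=Sn+S>Sn$, so the fraction is \emph{strictly} proper for every $1\le r\le S$ (the factor $n!^{S-r}$, being a constant in $k$, cannot change this). There is never a polynomial part, and in the paper the $(\theta^u F)(1/z)$-terms arise \emph{only} from the $\qqQ_{u,s,j}(1/z)(\theta^u F)(1/z)$ contribution when Proposition~\ref{prop:1} is applied to the tail $\lz<m\le n+1$, not from the partial fractions. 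Dropping this phantom case removes the ``main obstacle'' you were worried about; the rest of your argument is essentially the paper's proof.
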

\begin{Remark} Since the Taylor expansion of $T_{S,r,n}(z)$ at $z=\infty$ has order $\ge rn+1$, this lemma shows that $T_{S,r,n}(z)$ can  be interpreted has an explicit Pad\'e-type approximant at $z=\infty$ for the functions $F_{u}^{[s]}(1/z)$ and $(\theta^uF)(1/z)$.
\end{Remark}
\begin{proof}
We have the partial fractions expansion in $k$: 
\begin{equation}\label{eq:100}
n!^{S-r}\frac{k(k-1)\cdots (k-rn+1)}{(k+1)^S(k+2)^S\cdots (k+n+1)^S} = \sum_{j=1}^{n+1} \sum_{s=1}^S \frac{c_{j,s,n}}{(k+j)^s}
\end{equation}
for some $c_{j,s,n}\in \mathbb Q$, which also depend on $r$ and $S$. It follows that 
$$
T_{S,r,n}(z)=\sum_{j=1}^{n+1} \sum_{s=1}^S c_{j,s,n} z^{j}F_{j}^{[s]}(1/z).
$$
Since  $n\ge \lz$, by Proposition~\ref{prop:1} (with $m = \lz - \ell+1$) we have
\begin{align*}
T_{S,r,n}(z)&=\sum_{j=1}^{\lz}\sum_{s=1}^S c_{j,s,n}  z^{j}F_{j}^{[s]}(1/z) + \sum_{j=\ell+m}^{n+1} \sum_{s=1}^S c_{j,s,n}  z^{j}F_{j}^{[s]}(1/z)
\\
&=\sum_{j=1}^{\lz}\sum_{s=1}^S c_{j,s,n}  z^{j}F_{j}^{[s]}(1/z) \\
&\qquad + \sum_{j=\ell+m}^{n+1} \sum_{s=1}^S c_{j,s,n}z^j  \left(\sum_{t=1}^{s}\sum_{u=1}^{\lz} \ppP_{u,t,s,j}F_u^{[t]}(1/z) 
+ \sum_{u=0}^{\mu-1} \qqQ_{u,s,j}(1/z)(\theta^u F)(1/z)\right)
\\
&=\sum_{u=1}^{\lz}\sum_{s=1}^S C_{u,s,n}(z) F_{u}^{[s]}(1/z)+  \sum_{u=0}^{\mu-1} \widetilde{C}_{u,n}(z)  z^{-S(\ell-1)}  (\theta^uF)(1/z) 
\end{align*}
where  
\begin{equation}\label{eq:101}
C_{u,s,n}(z) = c_{u,s,n}z^{u}+\sum_{j=\ell+m}^{n+1}\sum_{\sigma=s}^S z^jc_{j,\sigma,n}\ppP_{u,s,\sigma,j}
\end{equation}
and 
\begin{equation}\label{eq:102}
\widetilde{C}_{u,n}(z) = \sum_{j=\lz+1}^{n+1} \sum_{s=1}^S c_{j,s,n} z^{j+S(\ell-1)}\qqQ_{u,s,j}(1/z).
\end{equation}
The assertion on the degree of these polynomials is clear from their expressions since $\qqQ_{u,s,j}(1/z)$ is a polynomial in $1/z$ of degree at most $j+s(\ell-1)$.
\end{proof}

\subsection{Analytic and arithmetic bounds for $C_{u,s,n}(z)$ and $\widetilde{C}_{u,n}(z)$}

In this section, we prove two lemmas concerning the coefficients of the polynomials $C_{u,s,n}(z)$ and $\widetilde{C}_{u,n}(z)$. Given $\xi\in\Qbar$ we denote by $\house{\xi}$ the house of $\xi$, i.e. the maximum modulus of the Galois conjugates of $\xi$.

\begin{lem}\label{lem:6} For any $z\in \Qbar$, we have
$$
\limsup_{n\to +\infty} \big(\max_{u,s}\house{ C_{u,s,n}(z)}\big)^{1/n}\le C_1(F)^Sr^r2^{S+r+1}\max(1, \house{z})
$$
and
$$
\limsup_{n\to +\infty} \big(\max_{u} \house{  \widetilde C_{u,n}(z) } \big)^{1/n}\le C_1(F)^Sr^r2^{S+r+1}\max(1, \house{z}).
$$
\end{lem}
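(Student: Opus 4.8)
The plan is to bound the house of each coefficient of $C_{u,s,n}(z)$ and $\widetilde{C}_{u,n}(z)$ by estimating, separately, the three ingredients appearing in the explicit formulas \eqref{eq:101} and \eqref{eq:102}: the partial-fraction coefficients $c_{j,s,n}$, the quantities $\ppP_{u,s,\sigma,j}$ and the coefficients of $\qqQ_{u,s,j}(z)$ coming from Proposition~\ref{prop:1}, and the powers of $z$. First I would estimate the $c_{j,s,n}$. From the partial-fraction expansion \eqref{eq:100}, $c_{j,s,n}$ is obtained by evaluating $\frac{d^{S-s}}{dk^{S-s}}$ (up to a factorial normalization) of $n!^{S-r}\,k(k-1)\cdots(k-rn+1)\prod_{i\ne j}(k+i)^{-S}$ at $k=-j$; a crude bound suffices here. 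Since $k(k-1)\cdots(k-rn+1)$ evaluated near $k=-j\in\{-1,\ldots,-(n+1)\}$ is a product of $rn$ integers each of absolute value $\le rn+n+1\le (r+1)(n+1)$, and $n!^{S-r}$ together with the remaining factors $(k+i)^{-S}$ contribute at most $n!^{S-r}\cdot (\text{something like }n!^{-S+\text{const}})$, the standard Stirling-type manipulation gives $|c_{j,s,n}|\le C_1(F)^{0}\, r^{r(n+o(n))} 2^{(S+r)(n+o(n))}$ after collecting: the $r^r$ factor comes from $(rn)!/(n!^r)\le r^{rn}$ type estimates and the $2^{S+r}$ from binomial coefficients in the Leibniz expansion of the $(S-s)$-th derivative. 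The key point is that $\limsup_n(\max_{j,s}|c_{j,s,n}|)^{1/n}\le r^r 2^{S+r}$, with no contribution from $F$.

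Next I would invoke Proposition~\ref{prop:1}: part $(ii)$ gives $\limsup_n(\max_{u,t}\house{\ppP_{u,t,s,j}})^{1/n}$ and the analogous bound for coefficients of $\qqQ_{u,s,j}$ controlled by $C_1(F)^s\le C_1(F)^S$ (uniformly over $j\le n+1$, after noting that these houses, for fixed $s$, grow at most geometrically in $j$ with rate $C_1(F)^s$, which is what the $\limsup$ in Proposition~\ref{prop:1}$(ii)$ provides). The powers of $z$ appearing are $z^j$, $z^{j+S(\ell-1)}$ with $j\le n+1$ and $z^{-(\text{something})}$ inside $\qqQ_{u,s,j}(1/z)$; after multiplying through by the compensating $z^{-S(\ell-1)}$ in the statement of Lemma~\ref{lem600}, every monomial in $z$ or $1/z$ that survives has exponent of absolute value $O(n)$, so it contributes a factor $\house{z}^{O(n)}$, i.e. $\max(1,\house{z})$ to the $n$-th root. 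Since the house is sub-multiplicative and sub-additive up to the number of terms (which is polynomial in $n$ and thus invisible after taking $n$-th roots), I would combine the three estimates multiplicatively: $C_1(F)^S$ from the Proposition, $r^r 2^{S+r}$ from the $c_{j,s,n}$, one extra factor $2$ absorbing the polynomially-many summands and the cross terms (giving $2^{S+r+1}$), and $\max(1,\house{z})$ from the powers of $z$. This yields both displayed inequalities simultaneously, the argument for $\widetilde C_{u,n}$ being identical via \eqref{eq:102}.

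The main obstacle I anticipate is the estimate of $\house{c_{j,s,n}}$ (here equal to $|c_{j,s,n}|$ since $c_{j,s,n}\in\Q$), because one must differentiate the rational function $k\mapsto n!^{S-r}\,(k-rn+1)_{rn}\,(k+1)_{n+1}^{-S}$ a number $S-s$ of times and evaluate at a pole of the denominator that has been removed — equivalently, extract the coefficient of $(k+j)^{-s}$ in the Laurent expansion at $k=-j$. The clean way is to write $n!^{S-r}(k-rn+1)_{rn}/(k+1)_{n+1}^S = \big(\prod_{i=1}^{n+1}\frac{n!^{(S-r)/(n+1)}(k-rn+1)_{rn}^{1/(n+1)}}{(k+i)^S}\big)$ is awkward; instead I would expand $(k-rn+1)_{rn} = \prod_{a=1}^{rn}(k-rn+a)$ and $(k+1)_{n+1}^{-S}=\prod_{i\ne j}(k+i)^{-S}\cdot (k+j)^{-S}$, perform the Leibniz expansion of the $(S-s)$-th derivative of the holomorphic part at $k=-j$, and bound each resulting product of $rn$ factors of size $\le (r+1)(n+1)$, each of $(n+1)S$ factors $(i-j)^{-1}$ of size $\le 1$ once collected appropriately (this is where the analysis is delicate: $\prod_{i\ne j}|i-j|^{-S} = (j-1)!^{-S}(n+1-j)!^{-S}$ which is tiny and helps), against $n!^{S-r}$. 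Matching $n!^{S-r}\cdot(j-1)!^{-S}(n+1-j)!^{-S}\cdot((r+1)(n+1))^{rn}$ to the claimed rate $r^r 2^{S+r}$ per unit of $n$ is the computation that needs care; Stirling shows the worst case is $j\approx n/2$ or $j$ near the ends, and in every regime the exponential rate is $\le r^r 2^{S+r}$. Once this bound is in hand, everything else is bookkeeping with houses and $n$-th roots.
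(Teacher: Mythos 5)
Your overall decomposition is the same as the paper's: bound the partial-fraction coefficients $c_{j,s,n}$, invoke Proposition~\ref{prop:1}$(ii)$ for the houses of $\ppP_{u,s,\sigma,j}$ and of the coefficients of $\qqQ_{u,s,j}$, control the powers of $z$ in \eqref{eq:101}--\eqref{eq:102} by $\max(1,\house{z})^{O(n)}$, and combine everything by sub-multiplicativity of the house, the polynomially-many summands disappearing after taking $n$-th roots. The substantive difference is that for $c_{j,s,n}$ the paper does not compute anything at all: it simply cites \cite[Lemme 4]{ribordeaux}, which gives $\vert c_{j,s,n}\vert \le (rn+1)2^S (r^r 2^{S+r+1})^n$, and the entire exponential rate $r^r 2^{S+r+1}$ comes from that quoted bound. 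Your bookkeeping splits this as $r^r 2^{S+r}$ from $c_{j,s,n}$ plus an extra factor $2$ ``absorbing the polynomially-many summands and the cross terms''; that extra factor $2$ is not needed for the summands (they vanish in the $n$-th root), and the $+1$ in the exponent is intrinsic to the cited bound on $c_{j,s,n}$ — so the attribution is off even though the final number agrees.

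The place where your proposal has a genuine gap is exactly the place you flag yourself: the re-derivation of the $c_{j,s,n}$ bound. You correctly identify that one must extract the coefficient of $(k+j)^{-s}$ in the Laurent expansion at $k=-j$ of $n!^{S-r}(k-rn+1)_{rn}(k+1)_{n+1}^{-S}$, and that a Stirling-type optimization over $j$ (and $s$) is required, but you do not carry it out; statements like ``Stirling shows the worst case is $j\approx n/2$ or $j$ near the ends, and in every regime the exponential rate is $\le r^r 2^{S+r}$'' are exactly the content of \cite[Lemme 4]{ribordeaux}, not a proof of it. If you wish to make the lemma self-contained you must perform that Leibniz-plus-Stirling computation; otherwise it is cleaner to cite, as the paper does. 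Apart from this, the remainder of your argument (Proposition~\ref{prop:1}$(ii)$ giving the $C_1(F)^S$ factor uniformly in $j\le n+1$, the powers of $z$ contributing $\max(1,\house{z})$, and the identical treatment of $\widetilde C_{u,n}$ via \eqref{eq:102}) matches the paper and is correct.
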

\begin{proof} In~\cite[Lemma 4]{ribordeaux}, it is proved that the coefficients $c_{j,s,n}$ in \eqref{eq:100} satisfy
$$
\vert c_{j,s,n}\vert \le (rn+1)2^S (r^r 2^{S+r+1})^n
$$
for all $j,s,n$. 
(Our $c_{j,s,n}$ are noted $c_{s,j-1,n}$  in \cite{ribordeaux}). 
To conclude the proof, we simply use this bound  in \eqref{eq:101} and 
\eqref{eq:102} together with Proposition~\ref{prop:1}$(ii)$.
\end{proof}

\begin{lem} \label{lem:6bis}  Let $z\in\K$ and $q\in\N\etoile$ be such that $qz\in\OK$. Then there exists a sequence $(\Delta_n)_{n\geq 1}$ of positive rational integers 
  such that  for any $ u$, $s$: 
$$
\Delta_nC_{u,s,n}(z) \in \OK,  \quad  \Delta_n\widetilde{C}_{u,n}(z) \in \OK  ,   \quad \textup{and}  \quad 
\lim_{n\to +\infty} \Delta_n^{1/n} =  q  C_2(F)^S e^{S}.
$$
\end{lem}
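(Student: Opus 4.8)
The plan is to read off the denominators of $C_{u,s,n}(z)$ and $\widetilde C_{u,n}(z)$ directly from the explicit formulas \eqref{eq:101} and \eqref{eq:102}, keeping track separately of the three sources of denominators — the rational numbers $c_{j,s,n}$, the algebraic numbers $\ppP_{u,s,\sigma,j}$ together with the coefficients of the polynomials $\qqQ_{u,s,j}$, and the powers of $z$ — which will contribute respectively the factors $e^{S}$, $C_2(F)^{S}$ and $q$ of the asserted limit.

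For the first source I would invoke the arithmetic counterpart of the analytic bound on $c_{j,s,n}$ used in the proof of Lemma~\ref{lem:6}: there is a sequence $(\mathfrak d_n)$ of positive integers (a fixed power of $\mathrm{lcm}(1,\dots,n)$ will do) with $\mathfrak d_n c_{j,s,n}\in\Z$ for all $1\le j\le n+1$, $1\le s\le S$, and $\limsup_n\mathfrak d_n^{1/n}\le e^{S}$; since the $c_{j,s,n}$ depend only on $r,S,n$ and not on $F$, this is exactly the estimate established in \cite{ribordeaux} in the case $A_k=1$. For the second source, recall that $\ell\ge1$ (as $F$ is not a polynomial), so that for every $\sigma\le S$ and $n'\le n+1$ the denominator of each $\ppP_{j,t,\sigma,n'}$ and of each coefficient of $\qqQ_{j,\sigma,n'}$ divides $d^{\sigma}\delta_{(n+1)+\sigma(\ell-1)}^{3\sigma}$ by \eqref{eqdenoms}, hence divides the integer $d^{S}\delta_{(n+1)+S(\ell-1)}^{3S}$; therefore $\mathcal D_n:=\mathrm{lcm}_{1\le\sigma\le S}D(F,\sigma,n+1)$ divides that same integer, so $\limsup_n\mathcal D_n^{1/n}\le C_2(F)^{S}$ by \eqref{eqmajodelta}, and $\mathcal D_n$ clears the denominators of all the $\ppP_{u,s,\sigma,j}$ and all the coefficients of the $\qqQ_{u,s,j}$ occurring (with $j\le n+1$) in \eqref{eq:101}--\eqref{eq:102}. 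For the third source, $qz\in\OK$ gives $q^{a}z^{b}=q^{a-b}(qz)^{b}\in\OK$ whenever $0\le b\le a$; and, using $\deg\qqQ_{u,s,j}\le j+s(\ell-1)\le j+S(\ell-1)$, one checks that every power of $z$ occurring in $C_{u,s,n}(z)$ or $\widetilde C_{u,n}(z)$ has exponent in $[0,M_n]$ with $M_n:=n+1+S(\ell-1)$, so that $q^{M_n}$ times such a monomial times an element of $\OK$ lies in $\OK$.

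Putting the three together, $a_n:=\mathfrak d_n\mathcal D_n q^{M_n}$ is a positive integer such that, after multiplication by $a_n$, each term of \eqref{eq:101} and \eqref{eq:102} is a product of three elements of $\OK$ — one of the form $\mathfrak d_n c_{j,\sigma,n}\in\Z$, one of the form $\mathcal D_n\ppP_{u,s,\sigma,j}$ or $\mathcal D_n\times(\text{a coefficient of }\qqQ_{u,s,j})$, and one of the form $q^{M_n}z^{b}$ — whence $a_nC_{u,s,n}(z)\in\OK$ and $a_n\widetilde C_{u,n}(z)\in\OK$ for all $u,s$; and the three bounds above give $\limsup_n a_n^{1/n}\le q\,C_2(F)^{S}e^{S}=:L$. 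To upgrade this $\limsup$ to the limit required in the statement I would finally set $\Delta_n:=a_n\lceil L^{n}/a_n\rceil$ (and $\Delta_n:=1$ for the finitely many $n<\lz$): this is a positive integer multiple of $a_n$, so it keeps the integrality properties, and $L^{n}\le\Delta_n<L^{n}+a_n\le L^{n}+(L+\varepsilon)^{n}$ for $n$ large and every $\varepsilon>0$, whence $\Delta_n^{1/n}\to L$.

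The step I expect to be the real obstacle is the bound $\limsup_n\mathfrak d_n^{1/n}\le e^{S}$ on the denominators of the partial-fraction coefficients of $n!^{S-r}(k-rn+1)_{rn}/(k+1)_{n+1}^{S}$. The delicate point is that the naive residue formula for $c_{j,s,n}$ would, through the derivatives of the numerator $k(k-1)\cdots(k-rn+1)$, also introduce factors of $\mathrm{lcm}(1,\dots,(r+1)n)$, which would spoil the exponent; one has to exploit that $n!^{S-r}$ is exactly the normalisation making the leading Laurent coefficient at each pole $k=-j$ an integer, and then see that only powers of $\mathrm{lcm}(1,\dots,n)$ actually survive. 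This computation is carried out in \cite{ribordeaux}, and since $c_{j,s,n}$ is independent of $F$ it transfers here unchanged.
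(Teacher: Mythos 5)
Your proposal is correct and follows essentially the same approach as the paper: one reads the denominators off from \eqref{eq:101}--\eqref{eq:102}, invokes \cite[Lemme 5]{ribordeaux} to get $d_n^{S}c_{j,s,n}\in\Z$ with $d_n=\mathrm{lcm}\{1,\dots,n\}$ and $\lim d_n^{1/n}=e$, and uses Proposition~\ref{prop:1}$(iii)$ for the $\ppP$'s and the coefficients of the $\qqQ$'s, together with $q^{\deg}$ for the powers of $z$. The only cosmetic difference is that the paper promotes the $\limsup$ to a genuine limit by padding $D(F,S,n)$ up to size $\ge C_2(F)^{Sn}/2$, while you perform a single ceiling adjustment at the end; both are valid, and your explicit $\mathrm{lcm}$ over $\sigma\le S$ is if anything a slightly more careful reading of the definition of $D(F,\sigma,\cdot)$ than the paper's shorthand.
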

\begin{proof} Let $d_n=\textup{lcm}\{1,2,\ldots,n\}$. The proof of~\cite[Lemme 5]{ribordeaux} shows  that $d_n^S c_{j,s,n}\in \mathbb Z$ for all $j,s,n$; we recall that $\lim_n d_n^{1/n}=e$. 
On the other hand, in  Proposition~\ref{prop:1}$(iii)$  we may assume that $D(F,S,n)\geq C_2(F)^{Sn}/2$, upon multiplying $D(F,S,n)$ with a suitable positive integer if necessary, so that $\lim D(F,S,n)^{1/n} = C_2(F)^{S}$. Then
 the result follows again from  \eqref{eq:101} and \eqref{eq:102}.
\end{proof}

\subsection{Asymptotic estimate of the linear form} \label{subsec53}

The following lemma will be proved in \S \ref{sec:asympT} using singularity analysis and  the saddle point method.

\begin{lem} \label{lem:7nv} Let $\alpha\in\C$ be such that $0< | \alpha| < R$. Assume that $S$ is sufficiently large (with respect to $F$ and $\alpha$), and that $r$ is the integer part of $\frac{S}{(\log S)^2}$. Then there exist some 
 integers $Q\geq 1$ and $\lambda\geq 0$, real numbers $a  $ and $\kappa$, non-zero complex numbers $c_1$,\ldots, $c_Q$, and pairwise distinct complex numbers $\zeta_1$, \ldots, $\zeta_Q$, such that $|\zeta_q|=1$ for any $q$, 
$$
T_{S,r,n}(1/\alpha ) = a^n n^\kappa \log (n)^\lambda \Big( \sum_{q=1}^Q c_q \zeta_q^n +o(1)\Big) \mbox{ as } n\to\infty,
$$
and
$$0 < a  \le \frac1{r^{S-r}}.$$
\end{lem}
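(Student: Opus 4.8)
The plan is to analyze the series $T_{S,r,n}(1/\alpha)$ by recognizing it as a contour integral amenable to singularity analysis in $n$ followed by the saddle-point method. First I would write the summand of $T_{S,r,n}(z)$ in closed form using $\Gamma$-functions: since
$$
n!^{S-r}\frac{(k-rn+1)_{rn}}{(k+1)_{n+1}^S}
= n!^{S-r}\,\frac{\Gamma(k+1)}{\Gamma(k-rn+1)}\cdot\frac{\Gamma(k+1)^S}{\Gamma(k+n+2)^S},
$$
one sees that $T_{S,r,n}(z)$ is, up to elementary factors, the coefficient extraction of a product of $G$-function-type generating series. The idea is to represent $A_k$ via a Cauchy integral $A_k = \frac{1}{2\pi i}\int F(w)\,w^{-k-1}\,dw$ over a small circle, swap sum and integral (legitimate since $|z|>1/R$ forces geometric convergence in $k$), and perform the sum over $k$. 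This produces an integral representation
$$
T_{S,r,n}(1/\alpha) = \frac{1}{2\pi i}\int_{\mathcal C} F(w)\,\Psi_{S,r,n}(\alpha w)\,\frac{dw}{w},
$$
where $\Psi_{S,r,n}$ collects the summed hypergeometric-type kernel; here one must track carefully which contour $\mathcal C$ to take (a small loop around $0$, deformed to capture the relevant singularities of $F$ and of the kernel).

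Next I would identify the finite singularities of $F$ — these are among the singularities of the minimal $G$-operator $L$, hence finite in number, say $\rho_1,\dots,\rho_N$ — and deform the contour $\mathcal C$ so that $T_{S,r,n}(1/\alpha)$ is expressed as a sum of local contributions near each $\alpha\rho_i$ (plus a negligible remainder). Around each such singularity one has, by the theory of $G$-functions (regular singular points, rational exponents, at worst logarithmic local behavior), an expansion of $F(w)$ of the form $(1-w/\rho_i)^{e}\cdot(\text{analytic})\cdot(\log)^{\nu}$. For each local piece, the remaining integral is of the shape $\int \phi(w)^n\,\psi(w)\,dw$ with a large parameter $n$ (coming from the $n!^{S-r}$ prefactor together with the $\Gamma(k+n+2)^{-S}$ factor, after Stirling), so the saddle-point method applies: locate the saddle point of the phase $\phi$, compute $a = $ (modulus of $\phi$ at the saddle) and the polynomial corrections $n^{\kappa}\log(n)^{\lambda}$, and sum the several oscillating contributions $c_q\zeta_q^n$ arising from the several singularities (and possibly several saddle points), each with $|\zeta_q|=1$ because the $\zeta_q$ are unimodular ratios coming from the arguments of the $\alpha\rho_i$ and of the saddle. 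This yields exactly the claimed asymptotic shape.

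The bound $0<a\le r^{-(S-r)}$ is obtained by a direct estimate of the saddle value, not from the fine asymptotics: the factor $n!^{S-r}$ against the denominator $(k+1)_{n+1}^S$ and the numerator $(k-rn+1)_{rn}$, evaluated at the dominant scale $k\asymp n$, produces a geometric rate in $n$ whose base is controlled by elementary inequalities; choosing $r\approx S/(\log S)^2$ makes $r^{-(S-r)}$ the governing quantity, and one checks $a\le r^{-(S-r)}$ by comparing the saddle-point exponential rate with the crude bound obtained by dropping the numerator and bounding the summand termwise (as in \cite{ribordeaux}, Lemma 4 and the surrounding estimates, generalized to arbitrary $A_k$ using $|A_k|\le C^{k+1}$). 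I expect the main obstacle to be the bookkeeping in the singularity analysis step: one must justify the contour deformation uniformly in $n$, control the logarithmic factors (the exponent $\lambda$) that appear when exponents of $L$ at a finite singularity coincide or differ by integers, and verify that the leading contributions do not cancel — i.e. that the $c_q$ are genuinely nonzero — which requires that $F$ is not a polynomial (so it genuinely has a finite singularity) and a careful argument that the dominant saddle contribution from at least one singularity survives. Assembling these local pieces into the single clean formula, with pairwise distinct $\zeta_q$ and nonzero $c_q$, is where the real work lies; the positivity $a>0$ and the upper bound $a\le r^{-(S-r)}$ are comparatively routine once the saddle is located.
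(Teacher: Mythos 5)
Your high-level blueprint (Cauchy representation of $A_k$, contour deformation to singularities, saddle-point asymptotics) is in the right spirit, and the crude bound $a\le r^{-(S-r)}$ by termwise estimates is indeed how the paper obtains it. But the specific integral you propose to analyse is not the one the paper analyses, and I don't think yours leads anywhere without an additional crucial idea that you omit.

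The key difference is the variable in which you do the saddle-point analysis. After inserting $A_k=\frac{1}{2\pi i}\oint F(w)w^{-k-1}\,dw$ and summing over $k$, you are left with $\int_{\mathcal C}F(w)\,\Psi_{S,r,n}(\alpha/w)\,\frac{dw}{w}$ where $\Psi_{S,r,n}(y)=n!^{S-r}\sum_k\frac{(k-rn+1)_{rn}}{(k+1)_{n+1}^S}y^k$ — i.e.\ the kernel is $T_{S,r,n}$ itself for the reference case $F=1/(1-z)$, a fresh transcendental function that still carries a sum over $\asymp n$ terms. You claim the remaining integral is of the form $\int\phi(w)^n\psi(w)\,dw$ "after Stirling," but no Stirling application in the $w$-variable produces a clean power $\phi(w)^n$: the $n$-dependence of $\Psi_{S,r,n}(\alpha/w)$ is not a simple $n$-th power, it is an oscillating sum whose asymptotics are themselves what one is trying to establish. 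What the paper does instead is leave the sum over $k$ intact, write $A_k=\sum_j B_j(k)$ with $B_j(z)$ the Hankel-contour interpolation of the local expansion of $F$ at its $j$-th singularity (Lemma~\ref{lem:new1}, which uses the Andr\'e--Chudnovsky--Katz theorem and N\"orlund's ``s\'eries de facult\'es'' to control $B_j(tn)$ as $n\to\infty$), and then convert $\sum_k$ into a Mellin--Barnes integral via the residue theorem against $\pi/\sin(\pi t)$; after the substitution $t\mapsto tn$ the integrand has the genuine form $g(t)e^{n\varphi(z,t)}$ and the saddle point lives in the Mellin variable $t$. This Mellin--Barnes step is the missing idea in your proposal; without it there is no phase $\varphi$ to locate a saddle of.

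Two further points you flag but do not resolve are in fact where most of the paper's effort goes. First, justifying the contour deformation uniformly in $n$ requires constructing explicit admissible paths depending on the location of $-\alpha/\xi_j$ in the unit disc (Step~5 of Lemma~\ref{lem:new2}): three separate cases, with polygonal paths and a careful monotonicity analysis of $\Re\varphi$ — this is not mere ``bookkeeping.'' Second, the non-vanishing of the $c_q$ and the pairwise distinctness of the $\zeta_q$ are genuinely nontrivial: the former uses that $\kappa_j\neq 0$ because $\xi_j$ is a non-removable singularity, and the latter is established in Proposition~\ref{prop:asympT} via Baker's theorem (linear forms in logarithms, to rule out that $\arg(-\alpha/\xi_j)/\pi$ is an irrational Liouville number). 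Your proposal mentions neither mechanism. As it stands the plan is plausible in outline but lacks the central analytic device and a strategy for the non-degeneracy of the leading terms.
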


\subsection{Completion of the proof of Theorem~\ref{theo:1}}\label{sec:completion}

Let $\alpha$ be a non-zero element of $\K$ such   that $| \alpha| < R$; choose $q\in\N\etoile$ such that $q/\alpha \in\OK$.  By Lemmas~\ref{lem:6} and~\ref{lem:6bis}, $p_{u,s,n}:=\Delta_nC_{u,s,n}(1/\alpha)$ and 
$\tilde p_{u,n}:=\Delta_n\widetilde{C}_{u,n}(1/\alpha)$  belong to $\OK$ and for any $u$, $s$ we have 
$$
\limsup_{n\to +\infty} \max(\vert p_{u,s,n} \vert^{1/n}, \vert \tilde p_{u,n} \vert^{1/n}) \le b:=q  C_1(F)^SC_2(F)^Se^S r^r2^{S+r+1}\max(1, \house{1/\alpha}).
$$
Using Lemma \ref{lem600} we consider 
$$
\tau_n:= \Delta_n T_{S,r,n}( 1/\alpha )= \sum_{u=1}^{\lz}\sum_{s=1}^S p_{u,s,n} F_u^{[s]}(\alpha) + \sum_{u=0}^{\mu-1} \tilde{p}_{u,n} \alpha^{S(\ell-1)} (\theta^u F)(\alpha).
$$
Choosing $r = [ S /(\log S)^{2}]$,  Lemmas \ref{lem:6bis} and~\ref{lem:7nv} yield as $ n\to\infty$:
  $$ \tau_n = a_0^{n(1+o(1))}  \Big( \sum_{q=1}^Q c_q \zeta_q^n +o(1)\Big) \mbox{ with } 0 < a_0  < \frac{q C_2(F)^S e^S}{r^{S-r}}.$$

Let $\Psi_{ \alpha,S}$ denote the $\K$-vector space spanned by the numbers $F_u^{[s]}( \alpha)$ and $(\theta^v F)( \alpha)$, $1\le u\le \lz$, $1\le s\le S$, $0\le v\le \mu-1$.  
It follows from Theorem \ref{th:nest}   that 
$$
\dim_{\K} (\Psi_{ \alpha,S})\ge \frac{1}{[\K:\Q]} \Big(1 - \frac{\log (a_0) }{\log (b)}\Big).$$
Now, as $S\to +\infty$, 
$$\log(b)  = \log(2eC_1(F)C_2(F))S+o(S) \mbox{ and } 
\log(a_0) \leq  -S\log(S)+ o(S\log S)$$
so that 
\begin{equation}\label{eq:300}
\dim_{\K} (\Psi_{ \alpha,S})\ge  \frac{1+o(1)}{  [\K:\Q] \log(2e C_1(F)C_2(F))}\log(S)
\end{equation}
as $S\to +\infty$. 

We recall that $\Phi_{ \alpha,S}$ is the $\K$-vector space spanned by the numbers $F_u^{[s]}( \alpha)$ for  $u\ge 1$ and $0\le s\le S$. Now, \eqref{eq:406} in Proposition~\ref{prop:1} with $m =\lz-\ell+1$ (i.e. \eqref{eqBintro}) and $z= \alpha$ shows that in fact $\Phi_{ \alpha,S}$ is a  $\K$-subspace of $\Psi_{ \alpha,S}$. In particular, for any $S\ge 0$, 
$$
\dim_{\K} (\Phi_{ \alpha,S})\le \dim_{\K} (\Psi_{ \alpha,S}) \le \lz S+\mu,  
$$
which proves the right-hand side of~\eqref{eq:200} in Theorem~\ref{theo:1}. On the other hand, we also have  
$$
\dim_{\K} (\Psi_{ \alpha,S})\le \dim_{\K} (\Phi_{ \alpha,S}) +\mu
$$ 
so that the lower bound \eqref{eq:300} holds as well with  $\Phi_{ \alpha,S}$ instead of  $\Psi_{ \alpha,S}$ because $\mu$ is independent from $S$. This proves the left hand side of~\eqref{eq:200} in Theorem~\ref{theo:1} with $C(F)=\log(2e C_1(F)C_2(F))$.

\section{Asymptotic behavior of  $T_{S,r,n}(1/\alpha)$}\label{sec:asympT}

In this long section, we determine the precise asymptotic behavior of $T_{S,r,n}(1/\alpha)$ as $n\to +\infty$, under certain conditions on $r$ and $S$. The result is presented as 
Proposition~\ref{prop:asympT} at the very end of the section, and then we deduce from it Lemma \ref{lem:7nv} stated in \S \ref{subsec53}. 
Before that, we state and prove many preliminary results.

\subsection{Analytic representation of $T_{S,r,n}(1/\alpha)$}

Let $\alpha\in\C $ be such that $0< \vert \alpha\vert<R$. We start with 
\begin{align*}
T_{S,r,n}(1/\alpha)
&= n!^{S-r}\sum_{k=0}^\infty \frac{(k-rn+1)_{rn}}{(k+1)_{n+1}^S}\, A_k \,\alpha^{k}
\\
&=n!^{S-r}\sum_{k=0}^\infty \frac{(k-rn+1)_{rn}}{(k+1)_{n+1}^S}\left(\frac{1}{2i\pi}\int_{\mathcal C}\frac{F(z)}{z^{k+1}}dz\right) \alpha^k
\end{align*}
where $\mathcal{C}$ is any direct closed path surrounding $0$ and enclosing none of the singularities of $F(z)$. We want to define a suitable  analytic  function $A(z)$ such that $A(z)=A_k$ for any large enough integer $k$.

Let $\xi_1, \ldots, \xi_p$ denote the finite singularities of $F(z)$. We exclude from this list 
possible removable singularities which contribute $0$ to~\eqref{eq:600} below; then $\xi_j\neq 0$ for any $j$. We have $p\ge 1$ because $F(z)$ is not a polynomial. Amongst these singularities, we will not distinguish   poles from  branch points. 

Let $\vartheta\in(\frac{3\pi}4, \pi)$ be such that $\arg(\xi_j)\not\equiv \vartheta\bmod 2\pi$ for any $j$. We choose also $\eps_j\in (\frac{-\pi}8, \frac{\pi}8)$ such that the half-lines $L_j = \xi_j + \xi_j e^{i\eps_j} \R_+$ are pairwise disjoint, and disjoint from $L_0 = e^{i\vartheta}\R_+$; note that $\pi/8$ (and $3\pi/4$ above) do not play a special role here. Then  
 $\mathcal D=\mathbb{C}\setminus (L_0 \cup L_1\cup \cdots \cup L_p)$ is simply connected; using analytic continuation $F$ is well-defined on  $\mathcal D$. Moreover for $z\in\C\setminus L_0$ we choose the value of $\arg (z)$ between $\vartheta-2\pi$ and $\vartheta$ so that $\log(z) = \ln|z| + i \arg(z)$ is also well-defined on  $\mathcal D$. Unless otherwise stated, we shall use this choice everywhere until the end of the proof of Lemma \ref{lem:Tserint}.

Since $F(z)$ is fuchsian, it has moderate growth at $\infty$, i.e there exists $u>0$ such that $\vert F(z)\vert \ll \vert z\vert^u$ as $z\to \infty, z\in \mathcal{D}$. Hence if $k>u$, we can ``send'' $\mathcal{C}$ to $\infty$: we then have
$$
\frac{1}{2i\pi}\int_{\mathcal C}\frac{F(x)}{x^{k+1}}dx = \sum_{j=1}^p \frac{1}{2i\pi}\int_{\widehat{L}_j}\frac{F(x)}{x^{k+1}}dx
$$
where for each $j$, $\widehat{L}_j$ is a Hankel contour: from $\infty$ to $\xi_j$ on one bank of the cut $L_j$ (namely with $\arg(z-\xi_j)$ slightly less than $\arg(\xi_j)+\eps_j$) and back to $\infty$ on the other bank, and always at a (constant) positive distance of $L_j$. 
We thus have the representation
\begin{equation}\label{eq:600}
A_k=\sum_{j=1}^p \frac{1}{2i\pi}\int_{\widehat{L}_j} \frac{F(x)}{x^{k+1}} dx.
\end{equation}
Note that if $\xi_j$ is a pole of $F(z)$, then 
$$
\frac{1}{2i \pi}\int_{\widehat{L}_j} \frac{F(x)}{x^{k+1}} dx = \text{Res}\Big(\frac{F(x)}{x^{k+1}}, x=\xi_j\Big).
$$
We define 
$$
B_j(z)=\frac{1}{2i \pi} \int_{\widehat{L}_j} \frac{F(x)}{x^{z+1}} dx = \frac{\xi_j^{-z}}{2i\pi}\int_{\widetilde{L}_j} \frac{F(\xi_j x)}{x^{z+1}} dx
$$
where $\widetilde{L}_j=\xi_j^{-1}\widehat{L}_j$; recall that $-\frac{\pi}8 < \eps_j < \frac{\pi}8$ so that $\arg(\xi_j x)  =\arg(\xi_j) + \arg(x)$ when $x$ lies on $\widetilde{L}_j$.
 Each function $B_j(z)$ is analytic in $\textup{Re}(z)>u$ (at least).
Note that $\widetilde{L}_j$ is again a Hankel contour: from $\infty$ to $1$ on the bank of the cut $ 1 + e^{i\eps_j}\R_+ $ where $\arg(x-1)$ is slightly less than $\eps_j$, and back to $\infty$ on the other bank,  always at a (constant) positive distance of the cut.

\begin{lem}\label{lem:new1}  
$(i)$ The function 
$
A(z):=\sum_{j=1}^p B_j(z)
$
is analytic in $\textup{Re}(z)>u$ and $A(k)=A_k$ for any integer $k>u$.

$(ii)$ For each $j$, there exist $s_j\in \mathbb N$, $\beta_j\in \mathbb Q$ and $\kappa_j\in \mathbb C\setminus \{0\}$ such that for any $t$ such that $\textup{Re}(t)>0$,  
\begin{equation}\label{eq:602}
B_j(tn) = \kappa_j \frac{\log(n)^{s_j}}{(tn)^{\beta_j }\xi_j^{tn}}\Big(1+\mathcal{O}\big(\frac{1}{\log(n)}\big)\Big)
\end{equation}
as $n\to +\infty$. The implicit constant is uniform in any half-plane $\textup{Re}(t)\ge d$ where $d$ is a  fixed positive constant. 
\end{lem}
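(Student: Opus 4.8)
The plan is to prove $(i)$ directly from the contour representation \eqref{eq:600}, and $(ii)$ by combining the local (Frobenius) structure of $F$ at each singularity $\xi_j$ with the standard singularity-analysis estimate for Hankel integrals. For $(i)$, the identity $A(k) = A_k$ for integers $k > u$ is nothing but \eqref{eq:600}, since $A(k) = \sum_{j=1}^p B_j(k) = \sum_{j=1}^p \frac{1}{2i\pi}\int_{\widehat L_j} F(x)\,x^{-k-1}\,dx$. To obtain the analyticity I would differentiate under the integral sign: because $\widehat L_j$ stays at a fixed positive distance from the cut $L_j$ (in particular from $\xi_j$), $F$ is bounded on the compact part of $\widehat L_j$, while $|F(x)| \ll |x|^u$ near $\infty$; hence on each half-plane $\textup{Re}(z) \geq u + \eta$ the integrand — and its $z$-derivative, which only introduces a harmless factor $\log x$ — is dominated by a fixed integrable function, so each $B_j$, and therefore $A$, is holomorphic on $\{\textup{Re}(z) > u\}$.

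For $(ii)$, fix $j$ and recall $B_j(z) = \xi_j^{-z}(2i\pi)^{-1}\int_{\widetilde L_j} F(\xi_j x)\,x^{-z-1}\,dx$, where $\widetilde L_j$ is a Hankel contour around $x = 1$. I would deform $\widetilde L_j$ so that it hugs the ray $1 + e^{i\eps_j}\R_+$ and invoke the theory of the fuchsian operator $L$ at its regular singular point $\xi_j$: on a punctured neighbourhood of $x = 1$,
$$F(\xi_j x) = \sum_{(\rho,k)\in E}(x-1)^\rho\big(\log(x-1)\big)^k g_{\rho,k}(x),$$
with $E$ finite, each $g_{\rho,k}$ holomorphic at $x = 1$, the integers $k$ bounded, and — crucially — every exponent $\rho$ \emph{rational}, by Katz's theorem on $G$-operators (cf. \cite[p.~719]{andre}, already used in \S\ref{seclinrec}). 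As $\xi_j$ was kept in the list of singularities, $F$ is not holomorphic there, so $E$ contains some $(\rho,k)$ with $\rho \notin \N$ or $k \geq 1$.

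I would then estimate the Hankel integral of each term $(x-1)^\rho(\log(x-1))^k g_{\rho,k}(x)$ against $x^{-z-1}$ by the substitution $x = 1 + w/z$ (the computational core of the Flajolet--Odlyzko transfer theorem): it equals a nonzero constant times $z^{-\rho-1}(\log z)^{k'}$ with relative error $O(1/\log z)$, for some $k' \leq k$ — the accounting of $k'$ and of the constant being the familiar one, a little delicate when $\rho$ is a non-negative integer, where the naive factor $1/\Gamma(-\rho)$ vanishes and one differentiates Hankel's formula for $1/\Gamma$. Summing over $E$ and keeping only the dominant term — smallest $\textup{Re}(\rho)$, attained by a \emph{unique} $\rho = \rho_j \in E$ because two distinct rationals cannot share a real part, then the largest admissible log-power $s_j$ — gives
$$B_j(z) = \kappa_j\,\frac{(\log z)^{s_j}}{z^{\beta_j}\,\xi_j^{z}}\Big(1 + O\big(1/\log z\big)\Big), \qquad \beta_j := \rho_j + 1 \in \Q,\quad s_j \in \N,\quad \kappa_j \neq 0 .$$
Putting $z = tn$ and using $\log(tn) = \log n + \log t = \log n\,(1 + O(1/\log n))$ uniformly for $\textup{Re}(t) \geq d$ turns this into \eqref{eq:602}. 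For the uniformity over a full half-plane one cannot keep a single cut (the error estimate in the substitution needs the image $w$-contour to open into $\textup{Re}(w) > 0$); instead I would cover $\{\textup{Re}(t) \geq d\}$ by finitely many closed sectors and, on each, rotate $L_j$ (hence $\widehat L_j$) to an admissible position, which leaves $B_j$ unchanged by Cauchy's theorem and makes the estimate uniform on that sector.

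The main obstacle is the whole of step $(ii)$: assembling the local Frobenius data of $F$ at each $\xi_j$ (finiteness of $E$, rationality of the exponents via Andr\'e--Chudnovsky--Katz), pushing the Hankel integral through the $\Gamma$-function Hankel formula with a genuinely \emph{uniform} relative error $O(1/\log n)$ over the half-plane $\textup{Re}(t) \geq d$ — which forces the contour-rotation bookkeeping above — and checking $\kappa_j \neq 0$, the last point resting precisely on rationality of the exponents (ruling out oscillatory cancellation between equal-modulus contributions) together with the exclusion of removable singularities.
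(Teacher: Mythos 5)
Your proposal is correct, but it follows a genuinely different technical route than the paper for part $(ii)$. The paper performs N\"orlund's change of variable $x = (1-y)^{-\omega}$ (with $\omega>0$ small) on the Hankel integral, which maps it to a loop integral whose image stays entirely inside the disk of convergence of the local Frobenius expansion; exchanging sum and integral then yields a \emph{convergent} factorial series (``s\'erie de facult\'es'') whose terms are ratios of Gamma functions, and the asymptotic \eqref{eq:602} is read off from the classical asymptotics of $\Gamma(\omega z)/\Gamma(\omega z + m + t + 1)$. You instead apply the Flajolet--Odlyzko scaling $x = 1 + w/z$ and Hankel's formula $\frac{1}{2\pi i}\int w^\rho e^{-w}\,dw = 1/\Gamma(-\rho)$, i.e.\ the transfer theorem of singularity analysis. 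The trade-off is visible: N\"orlund's substitution sidesteps the fact that the Frobenius expansion only converges near $x=1$ while the Hankel contour goes to infinity, a point your sketch glosses over — to make your route rigorous you must truncate $\widetilde L_j$ near $x=1$, observe that the far piece contributes $O(\rho^{-\Re(z)})$ for some $\rho>1$ (negligible against $z^{-\beta_j}(\log z)^{s_j}$), and apply the scaling only on the near piece; this is standard but should be stated. Conversely, your invocation of Katz's rationality theorem to make the dominant exponent $\rho_j$ unique (no ties between equal real parts) is more explicit than the paper, which even suggests in a parenthetical that $T_j\subset\Qbar$ would suffice — that remark understates what is needed, since $\beta_j\in\Q$ appears in the statement itself. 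Your argument for $\kappa_j\neq 0$ (choose the genuinely leading non-degenerate $(\rho_j,s_j)$; degeneracy only occurs for $\rho\in\N,\;k=0$, which are the holomorphic contributions; the exclusion of removable singularities guarantees something survives) is a reasonable rewording of the paper's terser ``the overall asymptotic cannot vanish identically since $\xi_j$ is non-removable.'' Finally, your contour-rotation device for uniformity on $\Re(t)\geq d$ needs a word of care — the half-plane is unbounded so ``finitely many closed sectors'' should be sectors $|\arg t|\le\theta_0<\pi/2$, pushed close to $\pi/2$ as $\Re(t)\ge d$ forces $|\arg(tn)|<\pi/2$ anyway — but this is a standard bookkeeping point, not a gap.
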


\begin{proof} Item $(i)$ is clear. Item $(ii)$ is standard but we sketch the argument for the reader's convenience; it is essentially the same one as 
in the proof of~\cite[Theorem 3]{rrjnt}. We fix $j\in\{1,\ldots,p\}$.
Given $x\in\C\setminus  \widetilde{L}_j $ we choose the value of $\arg (1-x)$ between $\eps_j -\pi$ and $\eps_j +\pi$; then $ \log(1-x)$ and $(1-x)^{t} $ are well-defined (for any $t\in\C$). To make things more precise we shall write $\log_j$ when we refer to this choice, and $\log$ when the previous one is used.
By the Andr\'e-Chudnovsky-Katz Theorem,  in a neighborhood of $x=1$, $x\not\in \widetilde{L}_j$, we have
\begin{equation}\label{eq:601}
F(\xi_j x)=\sum_{s\in S_{j}}\sum_{t\in T_{j}} \kappa_{j,s,t} \log_j(1-x)^{s} (1-x)^{t} F_{j,s,t}(1-x)
\end{equation}
where $\kappa_{j,s,t}\in \mathbb C$, $S_{j}\subset \mathbb N$, $T_j\subset \mathbb Q$ and $F_{s,t,j}(x)$ are $G$-functions. 
(In fact, the full force of the Andr\'e-Chudnovsky-Katz  Theorem is not needed here: the theory of fuchsian equations ensures that 
\eqref{eq:601} holds {\em a priori} with $T_j\subset \Qbar$ and $F_{j,s,t}(x)$ holomorphic at $x=0$, which is enough.) Each function 
$F_{s,t,j}(x)$ can be analytically continued but we would like to use only its Taylor series around $x=0$. To do that, 
we now use a classical trick that goes back to N\"orlund~\cite{norlund} at least. We set $x=1/(1-y)^{\omega}$, where $\omega>0$ is a parameter to be specified below, so that
\begin{align}
B_j(z)&= \frac{\xi_j^{-z}}{2i\pi}\int_{\widetilde{L}_j} \frac{F(\xi_j x)}{x^{z+1}} dx\nonumber
\\
&=\frac{\omega\xi_j^{-z}}{2i\pi}\int_{M_j} F\left(\frac{\xi_j}{(1-y)^{\omega}}\right)(1-y)^{z\omega-1} dy \label{eqbjarg}
\end{align}
where $M_j$ is a  closed loop around $N_j$,  with negative orientation,  passing through $1$; here $N_j$ is the set of all $y = 1 - (1+e^{i\eps_j}R)^{-1/\omega}$ with $R\in \R_+$. It is a cut going from 1 to 0, and if $\eps_j = 0$ (which is a suitable choice if $\arg (\xi_i) \not\equiv \arg (\xi_j) \bmod 2\pi$ whenever $i,j\in\{1,\ldots,p\}$ are distinct) then $N_j$ is the real interval $ [0,1]$. We may assume that $\textup{Re}(y )\leq 1$ for any $y\in M_j$ so that $\log(1-y)$ is well-defined for any $y\in M_j \setminus \{1\}$, and also $(1-y)^{z\omega-1}$. On the other hand, in the integral  \eqref{eqbjarg} we have $y\not\in N_j$ so that $(1-y)^{-\omega}\not\in   \widetilde{L}_j$: we have defined $\log_j(1-(1-y)^{-\omega})$ and we use it   in what follows. 

 We have
$$
F\left(\frac{\xi_j}{(1-y)^{\omega}}\right)=\sum_{s\in S_{j}}\sum_{t\in T_{j}} \kappa_{j,s,t} \frac{\partial^{s}}{\partial \varepsilon^{s}}
\left(
\left(1-\frac{1}{(1-y)^{\omega}}\right)^{t+\varepsilon} F_{j,s,t}\left(1-\frac{1}{(1-y)^{\omega}}\right)\right)_{\varepsilon=0}.
$$
We now choose $\omega$ small enough such that     $N_j$ is strictly inside the disk of convergence of each of the series 
$$
\left(1-\frac{1}{(1-y)^{\omega}}\right)^{t+\varepsilon} F_{j,s,t}\left(1-\frac{1}{(1-y)^{\omega}}\right)=y^{t+\varepsilon}\sum_{m=0}^\infty 
\phi_{j,s,t,m}(\varepsilon, \omega) y^m
$$
for any $\varepsilon>0$, where the coefficients $\phi_{j,s,t,m}(\varepsilon, \omega)$ are infinitely differentiable at $\varepsilon=0$. Here $\log (y)$ is defined with a cut along $N_j \cup (1+\R_+)$; if $y$ does not lie on this cut then $\eps_j < \arg (y) < \eps_j + 2\pi$.  Since  we may also ensure that $M_j$ is strictly inside these disks, 
  we can exchange summation and integral and we obtain 
$$
B_j(z) =\omega \xi_j^{-z} \sum_{s\in S_{j}}\sum_{t\in T_{j}} \kappa_{j,s,t}\sum_{m=0}^\infty \frac{\partial^{s}}{\partial \varepsilon^{s}}
\left(\phi_{j,s,t,m}(\varepsilon,\omega) 
\frac{1}{2i\pi}\int_{M_j} y^{m+t+\varepsilon}(1-y)^{z\omega-1}dy\right)_{\varepsilon=0}.
$$
Now this integral can be computed in terms of Euler's Beta function $B(z_1,z_2) = \frac{\Gamma(z_1)\Gamma(z_2)}{\Gamma(z_1+z_2)}$ as follows. Using the residue theorem we may assume that $\eps_j=0$, i.e. $N_j = [0,1]$. If $t > -1$ then $M_j$ can be taken as the succession of a path from 1 to 0 along this segment (in which $\arg (y) =2\pi$) and a path from 0 to 1 along the same segment (but in which $\arg(y)= 0$); in both paths we have $\arg(1-y)=0$. Therefore we obtain:
$$
  \int_{M_j} y^{m+t+\varepsilon}(1-y)^{z\omega-1}dy = \Big( 1 - e^{2i\pi(m+t+\varepsilon)}\Big) \frac{\Gamma(m+t+\varepsilon+1)\Gamma(\omega z)}{\Gamma(\omega z+m+t+\varepsilon+1)}.$$
Using analytic continuation with respect to $t$, we see that this equality holds for any $t\in\C$.
Hence,  using the reflection formula we obtain 
\begin{equation}\label{eq:605}
B_j(z) =\omega \xi_j^{-z} \sum_{s\in S_{j}}\sum_{t\in T_{j}} \kappa_{j,s,t}\sum_{m=0}^\infty \frac{\partial^{s}}{\partial \varepsilon^{s}}\left(
\frac{\phi_{j,s,t,m}(\varepsilon,\omega)e^{i\pi(m+t+\varepsilon)}\Gamma(\omega z)}{\Gamma(\omega z+m+t+\varepsilon+1)\Gamma(-m-t-\varepsilon)} \right)_{\varepsilon=0},
\end{equation}
where all the involved series are absolutely convergent; they are called ``s\'eries de facult\'es'' in \cite{norlund}. 
Note that of course the result does not depend on the chosen value of $\omega$ (but convergence holds only if $\omega$ is small enough). 

Convergent ``S\'eries de facult\'es'' play a role similar to asymptotic expansions (except that usually the latter are divergent): roughly speaking, instead of asymptotic expansions with terms of the form $1/z^m$, we obtain convergent expansions with terms of the form $1/(z)_m$. The asymptotic expansion~\eqref{eq:602} follows by classical arguments because we can easily get the asymptotic expansion of a ``s\'erie  de facult\'es'' as $z\to \infty$: if we differentiate $s$ times $1/(z)_m=\Gamma(z)/\Gamma(z+m)$ with respect to $m$, we obtain a finite sum of terms involving (derivatives of) the  Digamma function $\Psi(z)=\Gamma'(z)/\Gamma(z)$, which are asymptotically of the form $\log(z)^t/z^m$ with $0\le t \le s$. See~\cite{rrjnt} for details when $s=0$ and \cite[pp. 42--45]{norlund} for the general case, especially Th\'eor\`eme~1 there. 

Moreover,
the constant $\kappa_j$ in~\eqref{eq:602}  is non-zero. Indeed since $\xi_j$ is a non-removable singularity of $F(z)$,  the overall asymptotic expansion of $B_j(tn)$ obtained from~\eqref{eq:605} cannot be identically~$0$ as $n\to +\infty$.
\end{proof}

\medskip

In what follows we let 
$$ \grosb :=  \int_{c-i \infty }^{c+i \infty}  B_j(tn)
\frac{n!^{S-r}\Gamma((r-t)n)\Gamma(tn+1)^{S+1}}{\Gamma((t+1)n+2)^S}  \left(-\alpha\right)^{tn} dt$$
for $1\leq j \leq p$, where $c$ is such that $0 < c < r$; the residue theorem shows that $\grosb$ is independent from the choice of $c$.

\begin{lem} \label{lem:Tserint} If  $0<\vert \alpha\vert <R$ and  $r>u$
then for $n$ large enough, we have
\begin{equation}\label{eq:607}
T_{S,r,n}(1/\alpha)
=\sum_{j=1}^p \frac{(-1)^{rn}n}{2i\pi} \grosb.
\end{equation}
\end{lem}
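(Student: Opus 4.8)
The plan is to convert the Dirichlet-type series $T_{S,r,n}(1/\alpha)$ into a sum of Mellin--Barnes contour integrals, one for each finite singularity $\xi_j$ of $F$, by using the analytic representation $A_k = A(k) = \sum_{j=1}^p B_j(k)$ valid for $k > u$ (Lemma \ref{lem:new1}$(i)$), together with an interpolation identity expressing the rational factor in the $k$-th term as a ratio of Gamma functions. First I would write, for $n$ large and using $r > u$ so that only indices $k \geq 0$ with $A_k = A(k)$ matter after suitable truncation,
\begin{equation}\label{eqplanA}
T_{S,r,n}(1/\alpha) = n!^{S-r}\sum_{k\geq 0} \frac{(k-rn+1)_{rn}}{(k+1)_{n+1}^S}\, A(k)\, \alpha^k,
\end{equation}
the point being that for $0 \leq k < rn$ the Pochhammer $(k-rn+1)_{rn}$ vanishes (it contains the factor $k - (rn-1) + (rn-1) - \dots$; more precisely $k(k-1)\cdots(k-rn+1)=0$ for $0\le k\le rn-1$), so the sum effectively starts at $k = rn \geq 2n > u$ once $n$ is large, and there $A_k = A(k)$. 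Substituting $A(k) = \sum_j B_j(k)$ and exchanging the (absolutely convergent, since $|\alpha| < R$) sum over $k$ with the sum over $j$ reduces everything to analyzing, for each $j$, the series $\sum_{k} \frac{(k-rn+1)_{rn}}{(k+1)_{n+1}^S} B_j(k)\alpha^k$.

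The key analytic step is then to recognize each such series as a residue sum for the contour integral $\mathcal B_{S,r,n,j}(\alpha)$. I would rewrite the coefficient using Gamma functions: with $t = k/n$ one has $(k-rn+1)_{rn} = \Gamma(k+1)/\Gamma(k-rn+1)$ and $(k+1)_{n+1}^S = \big(\Gamma(k+n+2)/\Gamma(k+1)\big)^S$, so that
$$
n!^{S-r}\frac{(k-rn+1)_{rn}}{(k+1)_{n+1}^S}(-\alpha)^{k}\cdot(-1)^{k}
= n!^{S-r}\,\frac{\Gamma(k+1)^{S+1}}{\Gamma(k-rn+1)\,\Gamma(k+n+2)^S}(-\alpha)^{k}.
$$
Because $1/\Gamma(k-rn+1)$ vanishes exactly for $k = 0,1,\dots,rn-1$, the factor $\Gamma((r-t)n) = \Gamma(rn-k)$, which has poles precisely at those non-positive-argument points, is what produces, via the residue theorem, the sign $(-1)^{rn}$ and the combinatorial weight $n$ when one closes the Mellin--Barnes contour $\int_{c-i\infty}^{c+i\infty}$ to the right and picks up the poles of $\Gamma((r-t)n)$ at $t = r, r+1/n, r+2/n,\dots$, i.e. at $tn = rn, rn+1, \dots$, equivalently $k = tn = rn, rn+1,\dots$. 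The residue of $\Gamma((r-t)n)$ at $tn = rn + \ell$ (for integer $\ell\ge 0$) is $\frac{(-1)^\ell}{n\,\ell!}$, and combined with the Pochhammer reindexing $k=rn+\ell$ this reproduces each term of the series, with the overall factor $\frac{(-1)^{rn} n}{2i\pi}$ appearing after the standard $\frac{1}{2i\pi}\oint = \sum \mathrm{Res}$ bookkeeping and accounting for $(-1)^k=(-1)^{rn+\ell}=(-1)^{rn}(-1)^\ell$. Here one uses that $B_j(tn)$ is analytic for $\mathrm{Re}(t) > u/n$, so that for $n$ large $B_j$ contributes no poles to the right of the line $\mathrm{Re}(t) = c$ with $0 < c < r$, and that, by the asymptotics \eqref{eq:602}, $B_j(tn)$ together with the Gamma quotient decays fast enough along vertical lines and on the closing semicircles (Stirling's formula gives super-exponential decay of $\Gamma(tn+1)^{S+1}/\Gamma((t+1)n+2)^S$ in $\mathrm{Im}(t)$, while $|\alpha|<R<|\xi_j|$ controls $|(-\alpha)^{tn}\xi_j^{-tn}|$ for $\mathrm{Re}(t)$ bounded), justifying the contour shift and the interchange of summation and integration.

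The main obstacle is the convergence and contour-closing argument: one must check that $B_j(tn)\,\frac{n!^{S-r}\Gamma((r-t)n)\Gamma(tn+1)^{S+1}}{\Gamma((t+1)n+2)^S}(-\alpha)^{tn}$ really does decay rapidly enough as $|\mathrm{Im}(t)|\to\infty$ along $\mathrm{Re}(t)=c$ \emph{and} on large right-half-plane arcs, so that the integral converges absolutely and closing the contour to the right is legitimate with the sum of residues converging to the series. This requires a uniform Stirling estimate for the Gamma quotient (which is favorable when $S \geq 1$, as $\Gamma(tn+1)^{S+1}/\Gamma((t+1)n+2)^S \asymp$ a ratio behaving like $(tn)!^{S+1}/((t+1)n)!^{S}$, decaying like $n!^{-1}$-type factors times geometric terms in $t$), combined with the \emph{uniform} control of $B_j(tn)$ in half-planes $\mathrm{Re}(t)\ge d$ granted by Lemma \ref{lem:new1}$(ii)$; the hypothesis $r > u$ is exactly what guarantees the relevant poles of $\Gamma((r-t)n)$ lie in the region $\mathrm{Re}(t) \ge r > u \ge$ (abscissa of analyticity of $B_j(tn)/n$), so no spurious pole of $B_j$ interferes. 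Once these estimates are in place, collecting the residues over all $j$ and multiplying back the prefactor $n!^{S-r}$ yields exactly \eqref{eq:607}.
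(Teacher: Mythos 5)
Your plan is correct and follows essentially the same route as the paper's proof: both convert the series into the Mellin--Barnes integral $\grosb$ by residue calculus on a rightward contour, with the poles producing the terms $k = rn, rn+1, \ldots$, the hypothesis $r>u$ guaranteeing $A_k = A(k) = \sum_j B_j(k)$ on this range, and Stirling/Lemma \ref{lem:new1}(ii) controlling the tails. The only presentational difference is that the paper first invokes the kernel $\pi/\sin(\pi t)$ (later converted to $\Gamma$-form via $(t-rn+1)_{rn}\,\pi/\sin(\pi t) = (-1)^{rn-1}\Gamma(rn-t)\Gamma(t+1)$) and lets $N\to\infty$ on a rectangle, whereas you read the same residue identity directly off the poles of $\Gamma((r-t)n)$ and close to the right.
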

\begin{proof}
Let  $\mathcal{R}_{N,c}$  denote the positively oriented  rectangular contour  with vertices $c n  \pm iN$ and $N+\frac{1}{2}\pm iN$, where $u<c<r$ and the integer $N$ is such that $N \ge rn$. Then by the residue theorem
\begin{align*}
n!^{S-r}\sum_{k=rn}^N \frac{(k-rn+1)_{rn}}{(k+1)_{n+1}^S} A_k \alpha^k 
&= \frac{n!^{S-r}}{2i\pi}\int_{\mathcal{R}_{N,c}} A(t)\frac{(t-rn+1)_{rn}}{(t+1)_{n+1}^S} \frac{\pi}{\sin(\pi t)} (-\alpha)^t dt
\\
&= \sum_{j=1}^p \frac{n!^{S-r}}{2i\pi}\int_{\mathcal{R}_{N,c}} B_j(t)\frac{(t-rn+1)_{rn}}{(t+1)_{n+1}^S} \frac{\pi}{\sin(\pi t)} (-\alpha)^t dt.
\end{align*}
Here we take  $\log(-\alpha)$ such that $-\pi < \arg(-\alpha) - \arg(\xi_j) \leq \pi$, where $\arg(\xi_j)$ has been chosen at the beginning of \S \ref{sec:asympT}.
Now, if  $0<\vert \alpha\vert <R$  then 
$$
\lim_{N\to +\infty} n!^{S-r} \sum_{k=rn}^N \frac{(k-rn+1)_{rn}}{(k+1)_{n+1}^S} A_k \alpha^k =T_{S,r,n}(1/\alpha)
$$
while  
\begin{align*}
\lim_{N\to + \infty}\frac{n!^{S-r}}{2i\pi}&\int_{\mathcal{R}_{N,c}} B_j(t)\frac{(t-rn+1)_{rn}}{(t+1)_{n+1}^S} \frac{\pi}{\sin(\pi t)} (-\alpha)^t dt
\\ 
&=
\frac{n!^{S-r}}{2i\pi}\int_{cn+i\infty}^{cn-i\infty} B_j(t)\frac{(t-rn+1)_{rn}}{(t+1)_{n+1}^S} \frac{\pi}{\sin(\pi t)} (-\alpha)^t dt
\\
&=\frac{(-1)^{rn}n!^{S-r}}{2i\pi}\int_{cn+i\infty}^{cn-i\infty} B_j(t)\frac{(-t)_{rn}}{(t+1)_{n+1}^S} \frac{\pi}{\sin(\pi t)} (-\alpha)^t dt
\\
&=\frac{(-1)^{rn-1}n!^{S-r}}{2i\pi}\int_{cn+i\infty}^{cn-i\infty} B_j(t)\frac{\Gamma(rn-t)\Gamma(t+1)^{S}}{\Gamma(t+n+2)^S\Gamma(-t)}\Gamma(-t)\Gamma(t+1) (-\alpha)^{t} dt
\\
&=\frac{(-1)^{rn}n!^{S-r}n}{2i\pi}\int_{c-i\infty}^{c+i\infty} B_j(tn)\frac{\Gamma((r-t)n)\Gamma(tn+1)^{S+1}}{\Gamma((t+1)n+2)^S}(-\alpha)^{tn} dt.
\end{align*}
This concludes the proof of Lemma \ref{lem:Tserint}.
\end{proof}

\subsection{Asymptotic expansion of $\grosb$}

We want to estimate these integrals using the saddle point method. We first recall 
Stirling's formula 
$$
\Gamma(z)=z^{z-1/2}e^{-z} \sqrt{2 \pi} \Big(1+\mathcal{O}\Big(\frac1z\Big)\Big),  \quad z\to \infty,
$$
valid if $| \arg (z) | \leq \pi-\eps$ with $\eps>0$; here the constant implied in $\mathcal{O}\big(\frac1z\big)$ depends on $\eps$ but not on $z$.
 By Lemma~\ref{lem:new1}, we have  
$$\grosb 
= (2\pi)^{(S-r+2)/2}\kappa_j\cdot \frac{\log(n)^{s_j}}{n^{(S+r)/2+\beta_j }}\int_{c-i \infty }^{c+i \infty }
g(t) e^{n\varphi(-\alpha/\xi_j,t)}\left(1+\mathcal{O}\Big(\frac{1}{\log(n)}\Big)\right)dt$$ 
where the constant in $\mathcal{O}$ is uniform in $t$, and  
$$
g(t)= t^{- \beta_j - (S+1)/2}(t+1)^{-3S/2}(r-t)^{-1/2} ,
$$
$$
\varphi(z,t) = t \log(z) +(S+1)t\log(t)+(r-t)\log(r-t)-S(t+1)\log(t+1).
$$
We shall be interested only in the case where $z = -\alpha/\xi_j$, but from now on we consider any non-zero complex number $z$ such that $|z| < 1$ and 
  $-\pi < \arg (z) \leq \pi$.
 Indeed we  have 
$0<\vert -\alpha/\xi_j\vert<1$ because $0<\vert \alpha\vert<R$ the radius of convergence of $F$, which is equal to the minimal value of 
$\vert \xi_j\vert$, $j=1, \ldots, p$, and letting $\log(-\alpha/\xi_j) = \log(-\alpha) - \log (\xi_j)$ yields $\arg (-\alpha/\xi_j) \in (-\pi,\pi]$ (recall that $\arg(-\alpha)$ has been chosen in the proof of Lemma \ref{lem:Tserint}).

If $-\pi < \arg (z) < \pi$, we work in the cut plane $\Omega = \C\setminus ((-\infty,0]\cup [r, +\infty))$, so that any $t\in\Omega $ is such that 
  $\arg(t)$, $\arg(t+1)$ and $\arg(r-t)$ belong to $(-\pi,\pi)$. 
On the other hand, if $z$ is real and negative (i.e., $\arg (z) = \pi$), we work in   $\Omega = \C\setminus ((-\infty,0]\cup  ( r  +e^{i\pi/8}\R_+ ))$; if $t$ is real and $0< t<r$ we take $ \arg(t) = \arg(t+1) = \arg(r-t)
 = 0$, and we use analytic continuation to define $\arg (t)$, $\arg(t+1)$ and $ \arg(r-t)$ for any $t\in\Omega$. 
 
 In both cases,  
the   function $t\mapsto \varphi(z,t)$ is analytic on the cut plane $\Omega$.
 In what follows, $\varphi'(z,t)$ and  $\varphi''(z,t)$ denote the first and second derivatives of $\varphi(z,t)$ with respect to $t$. 
We denote by  $\tau_{S,r}(z)$   the unique solution (in $t$) of the equation 
$
zt^{S+1}=(r-t)(t+1)^S
$
which is such that $\textup{Re}(\tau_{S,r}(z))>0$. (A more precise localization is given below.)  
 For simplicity, we set  $\tau_j=\tau_{S,r}(-\alpha/\xi_j)$,  $\varphi_j=\varphi(-\alpha/\xi_j,\tau_j)$,  $\psi_j=\varphi''(-\alpha/\xi_j,\tau_j)$ and $g_j=g(\tau_j)$.

\begin{lem}\label{lem:new2} Let us assume that $r=r(S)$ is an increasing function of $S$ such that $r=o(S)$ and $Se^{-S/r}=o(1)$ as $S\to +\infty$. Then if $S$ is large enough (with respect to the choice of the function $S\mapsto r(S)$), the following estimate holds:
for any $j=1, \ldots, p$, we have $\kappa_jg_j\psi_j\neq 0$ and,  
as $n\to +\infty$, 
$$\grosb = (2\pi)^{(S-r+3)/2 }\frac{\kappa_j g_j}{\sqrt{- \psi_j}}\cdot \frac{\log(n)^{s_j}e^{\varphi_j n}}{n^{( S+r+1)/2+ \beta_j}} \cdot 
\big(1+o(1)\big).$$
\end{lem}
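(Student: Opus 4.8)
Recall from just before the statement that, with $z=-\alpha/\xi_j$,
$$\grosb = (2\pi)^{(S-r+2)/2}\,\kappa_j\,\frac{\log(n)^{s_j}}{n^{(S+r)/2+\beta_j}}\int_{c-i\infty}^{c+i\infty} g(t)\,e^{n\varphi(z,t)}\Bigl(1+\mathcal{O}\bigl(\tfrac{1}{\log n}\bigr)\Bigr)\,dt .$$
The plan is to evaluate this integral by the saddle point method and then to collect the prefactors. It suffices to prove that
$$I_n:=\int_{c-i\infty}^{c+i\infty} g(t)\,e^{n\varphi(z,t)}\,dt \;=\; g_j\,e^{\varphi_j n}\,\sqrt{\tfrac{2\pi}{-n\,\psi_j}}\,\bigl(1+o(1)\bigr)\qquad(n\to\infty),$$
with the branch of $\sqrt{-\psi_j}$ fixed by the direction of the steepest-descent contour at $\tau_j$; indeed, multiplying by the prefactor and using $(2\pi)^{(S-r+2)/2}(2\pi)^{1/2}=(2\pi)^{(S-r+3)/2}$ together with $n^{(S+r)/2+\beta_j}\,n^{1/2}=n^{(S+r+1)/2+\beta_j}$ then yields exactly the claimed asymptotics. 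Here $S$ and $r=r(S)$ are fixed (subject to $r=o(S)$ and $Se^{-S/r}=o(1)$), so $\varphi$, $g$ and $\tau_j$ do not depend on the large parameter $n$.

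The first step is to locate and examine the saddle point $\tau_j=\tau_{S,r}(z)$. By definition $\varphi'(z,\tau_j)=0$, i.e.\ $z\tau_j^{S+1}=(r-\tau_j)(\tau_j+1)^S$, with $\Re\tau_j>0$. Here I would establish the sharper localization alluded to just after that definition: writing $\tau_j=r-\epsilon_j$, the saddle equation forces $\epsilon_j=z\tau_j\bigl(\tfrac{\tau_j}{\tau_j+1}\bigr)^{S}\neq0$ and, using $r=o(S)$ and $Se^{-S/r}=o(1)$, one checks that $\epsilon_j\to0$ with $|\epsilon_j|^{-1}\to+\infty$ faster than $S/r$. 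Hence $\tau_j\to r$; in particular $\tau_j\notin\{0,-1\}$ and $\tau_j$ stays off the cuts of the domain $\Omega$, so $g_j=g(\tau_j)$ is a well-defined nonzero number. Moreover $\varphi''(z,t)=\frac{S+1}{t}+\frac1{r-t}-\frac{S}{t+1}$, so $\psi_j=\varphi''(z,\tau_j)=\epsilon_j^{-1}+\frac{S+\tau_j+1}{\tau_j(\tau_j+1)}$, where the second term is $O(S/r^2)$ and $\epsilon_j^{-1}$ dominates; thus $\psi_j\neq0$. With $\kappa_j\neq0$ from Lemma~\ref{lem:new1}$(ii)$ this proves $\kappa_j g_j\psi_j\neq0$. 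The second step is the contour deformation. The function $g(t)e^{n\varphi(z,t)}$ is analytic on the cut plane $\Omega$, whose cuts — placed in the construction preceding the lemma — avoid $\tau_j$; by Cauchy's theorem I would push the vertical line $\Re t=c$, within $\Omega$ and keeping $\Re t$ in a fixed compact subinterval of $(0,+\infty)$ so that the asymptotics of $B_j(tn)$ and Stirling's formula remain valid, to a contour through $\tau_j$ that follows steepest descent of $\Re\varphi(z,\cdot)$ near the saddle and returns to $\Re t=c$, $\pm i\infty$ in the tails. On the central portion $\Im\varphi(z,t)$ is constant, $\varphi(z,t)=\varphi_j+\tfrac12\psi_j(t-\tau_j)^2+O(|t-\tau_j|^3)$, and $\Re\varphi(z,t)$ decreases strictly away from $\tau_j$ and tends to $-\infty$ at the ends; elementary estimates show that the rest of the contour contributes $o$ of the central part.

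Granting this geometry, the classical Laplace argument yields $I_n=g_j\,e^{\varphi_j n}\sqrt{2\pi/(-n\psi_j)}\,(1+o(1))$: the Gaussian window around $\tau_j$ has width $\asymp(n|\psi_j|)^{-1/2}\to0$ while $\tau_j,\psi_j,g_j$ stay fixed, so $g$ is asymptotically constant on it and the cubic remainder is negligible. The factor $1+\mathcal{O}(1/\log n)$ in the original integral is harmless: writing it as $1+\rho_n(t)$ with $|\rho_n(t)|\le C/\log n$, its contribution is bounded by $\frac{C}{\log n}\int|g(t)|\,|e^{n\varphi(z,t)}|\,|dt|$, which — the modulus being integrated along the steepest-descent contour, where the phase is constant — is of the same order as $|I_n|$; hence the whole integral equals $I_n\bigl(1+\mathcal{O}(1/\log n)\bigr)=I_n(1+o(1))$. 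Substituting into the displayed formula for $\grosb$ and collecting the powers of $2\pi$ and $n$ completes the proof.

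The genuinely delicate point is not the local Laplace expansion but the global geometry of the contour deformation in this non-standard regime, where the phase $\varphi$ itself varies with $S$ and $r=r(S)$ as $n\to\infty$: one must check that the steepest-descent path through $\tau_j$ can be carried out to $\pm i\infty$ while keeping $\Re t$ bounded below by a fixed positive constant (so that the uniform asymptotics of $B_j(tn)$ from Lemma~\ref{lem:new1}$(ii)$ and Stirling's formula — both already used to reach the $e^{n\varphi}$ form — remain valid on the whole contour, tails included), while staying inside $\Omega$ and clear of the singular set $\{0,-1,r\}$, and that $\tau_j$ is a genuine non-degenerate saddle well separated from that set. The hypotheses $r=o(S)$ and $Se^{-S/r}=o(1)$ are precisely what make all of this hold for $S$ large.
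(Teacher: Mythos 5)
Your proposal correctly identifies the overall strategy (write the integral in the form $\int g(t)e^{n\varphi(z,t)}\,dt$, locate the saddle $\tau_j$, check nonvanishing of $\kappa_j g_j\psi_j$, deform the contour, apply the saddle point method, and track the prefactors), and the bookkeeping at the start and end is correct. The computation $\psi_j = \epsilon_j^{-1} + \frac{S+\tau_j+1}{\tau_j(\tau_j+1)}$ with $\epsilon_j = r-\tau_j = z\tau_j(\tau_j/(\tau_j+1))^S$, and the observation that $\epsilon_j^{-1}$ dominates under the hypotheses on $r(S)$, is exactly right and matches the paper's Steps 2--4.

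However, there is a genuine gap: you do not construct the contour, and the indication you give of how you would do it does not match the geometry of the problem. You say you would deform the line $\Re t = c$ ``within $\Omega$ and keeping $\Re t$ in a fixed compact subinterval of $(0,+\infty)$'' to a contour ``that follows steepest descent.'' But when $\Re(z)<0$, the saddle satisfies $\Re(\tau_j)>r$, so the deformed contour must cross past the branch point at $t=r$; and in the paper's construction (Step 5, cases $\Re(z)<0$) the admissible contours $\chemti'$ and $\chemhat'$ are \emph{not} kept in a bounded strip — they contain horizontal half-lines going to $\Re t=+\infty$ — and they are \emph{not} steepest-descent curves ($\Im\varphi$ is not constant on them); instead they are explicit polygonal paths along which one checks by hand, case by case, that $\Re\varphi(z,\cdot)$ has a unique global maximum at $\tau_j$ (including the three monotonicity checks $w_1,w_2,w_3$ for $\chemti$, and $w_4,w_5$ for $\chemhat$, plus the smoothing of the corner at $r$). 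You yourself flag this as ``the genuinely delicate point,'' and indeed it is precisely what the paper's Step 5 is devoted to; without it the proof is a framework, not a proof. A second, minor, omission: you need to know that $\tau_j$ is not merely a root of $P(t)=zt^{S+1}-(r-t)(t+1)^S$ but actually satisfies $\varphi'(z,\tau_j)=0$ (i.e.\ that the relevant integer multiple of $2\pi i$ is zero); this is the content of the paper's Step 3 and requires the same hypotheses on $r(S)$, but your argument does not address it.
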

Any choice of the form  $r(S)=[\frac{S}{\log(S)^{1+\eps}}]$ with   $\eps>0$ satisfies $r=o(S)$ and $Se^{-S/r}=o(1)$ (but not with $ \eps=0$); in Lemma \ref{lem:7nv}, we take $\eps=1$.

Note that we have three trivially equivalent expressions for $e^{\varphi_j}$:
\begin{equation} \label{eqtrois}
e^{\varphi_j}=\frac{(r-\tau_j)^{r}}{(\tau_j+1)^S}= \frac{\big((-\alpha/\xi_j)\tau_j^{S+1}\big)^r}{(\tau_j+1)^{S(r+1)}}
=-\frac{\xi_j(r-\tau_j)^{r+1}}{\alpha\tau_j^{S+1}}.
\end{equation}
\begin{proof} 
We split the proof in several steps. The assumptions made on $r$ and $S$ are not always necessary  at each step. We will write $\tau$ for $\tau_{S,r}(z)$ when there will no ambiguity.

\medskip

\noindent {\bf Step 1.} We want to begin localizing the solutions of the equation $\varphi'(z,t)=0$ (for any fixed  $z$ such that $0 < \vert z\vert <1$ and $-\pi < \arg (z) \leq \pi$), i.e. of 
$$
\log(z)+(S+1)\log(t)-\log(r-t)-S\log(t+1)=0.
$$
 These solutions are obviously amongst the solutions of the polynomial equation $P(t)=0$ where 
$$
P(t)=zt^{S+1}-(r-t)(t+1)^S.
$$
In this step, we prove the following facts: {\em For any $1\le r\le S$, the polynomial $P(t)$ has exactly $S$ roots in the half-plane $\textup{Re}(t)<-\frac12$ and one root in the half-plane  $\textup{Re}(t)>\frac12$.} 

Let us prove that there is no root in the strip $-\frac12\le \textup{Re}(t)\le\frac12$. 
We set $t=x+iy$ and assume that $-\frac12\le x\le \frac12.$ We have 
\begin{align*}
\vert t+1\vert &= \sqrt{(x+1)^2+y^2}\ge \sqrt{1/4+y^2}
\\
\vert r-t\vert &=\sqrt{(r-x)^2+y^2} \ge \sqrt{(r-1/2)^2+y^2}\ge \sqrt{1/4+y^2}
\\
\vert t\vert &=\sqrt{x^2+y^2}\le \sqrt{1/4+y^2}.
\end{align*}
Since $\vert z\vert<1$, it follows that 
$
\vert z\vert \vert t\vert^{S+1}<\sqrt{1/4+y^2}^{S+1}\le \vert r-t\vert \vert t+1\vert^S 
$
for any $t$ in the strip, which proves the claim.

Let us now prove that there are exactly $S$ roots in $\textup{Re}(t)<-\frac12$. With $u=1/t$, this amounts to prove that the equation 
$
z=(ru-1)(u+1)^S
$
has exactly $S$ solutions in the open disk $\vert u+1\vert <1$. Let us define 
$$
f(u)=z-r(u+1)^{S+1}+(r+1)(u+1)^S, \quad g(u)=z+(r+1)(u+1)^{S}.
$$
We have $f(u)-g(u)=-r(u+1)^{S+1}$ so that on the circle $\vert u+1\vert =1$ we have 
$$
\vert f(u)-g(u)\vert =r<r+1-\vert z\vert \le \vert g(u)\vert.
$$
Hence, by Rouch\'e's theorem, the equation $f(u)=0$ has the same number of solutions as $g(u)=0$ inside the disk $\vert u+1\vert <1$. There are $S$ such solutions because the solutions of $g(u)=0$ are $-1+ ( - z/(r+1))^{1/S}e^{2i\pi k/S}$, $k=0, \ldots, S-1$, which are all inside the disk.

It follows that $P(t)$ has exactly one root in the half-plane $\textup{Re}(t)>\frac12$. (We can be more precise. Let us define the  functions $P(t)=zt^{S+1}-(r-t)(t+1)^S$ and $Q(t)=-(r-t)(t+1)^S$. On the circle $\vert r-t\vert=\frac{r^2}{S+r}$, we have $\vert P(t)-Q(t)\vert = \vert z t^{S+1}\vert <\vert Q(t)\vert$. Hence, $P(t)$ has a root inside the disk $\vert r-t\vert<\frac{r^2}{S+r}$. This estimate holds for any $r,S$, but we will prove and use a more precise one under a more restrictive condition on $r$.)

\medskip

\noindent {\bf Step 2.} We need a more precise estimate for $\tau = \tau_{S,r}(z)$ that the mere fact that  $\vert \tau-r\vert<\frac{r^2}{S+r}$, namely
\begin{equation} \label{eqordd}
 \tau_{S,r}(z) =r-rz\Big(\frac{r}{r+1}\Big)^S \big(1+o(1)\big).
\end{equation}
To prove this, we consider  the power series
$$
\upsilon_{S,r}(z):= \frac{1}r - \sum_{m=1}^\infty
 \frac{\binom{(S+1)m-1}{m}}{(S+1)m-1}\frac{r^{Sm-1}}{(r+1)^{(S+1)m-1}} (-z)^m .
$$
We shall prove that it 
has radius of convergence $\frac{S^S(r+1)^{S+1}}{r^r(S+1)^{S+1}} \ge 1$, with equality only for $r=S$, and that if $r$ is an   increasing function of $S$ such that $r=o(S)$ as $n\to +\infty$ then,  provided $S$ is large enough (with respect to the choice of $r(S)$), we have $1/\upsilon_{S,r}(z)=\tau_{S,r}(z)$, the unique root of $P(t)$ in the half-plane $\textup{Re}(t)>\frac12$.

As in the first step, we solve the equation $z=V(u)$, with $V(u)=(ru-1)(u+1)^S$, and then get the solutions of $P(t)=0$ by $t=1/u$. By Lagrange's inversion formula~\cite[p. 250]{dieudonne}, a solution of the equation $z=V(u)$ is 
\begin{align*}
\frac{1}{r}+\sum_{m=1}^{\infty} \frac{1}{m!}\left(\Big(\frac{u-1/r}{V(u)-V(1/r)}\Big)^m\right)_{u=1/r}^{(m-1)} z^m
&=\frac{1}{r}+\sum_{m=1}^{\infty} \frac{r^{-m}}{m!}\Big(\frac{1}{(u+1)^{Sm}}\Big)_{u=1/r}^{(m-1)} z^m
\\
&=\frac{1}r - \sum_{m=1}^\infty
 \frac{\binom{(S+1)m-1}{m}}{(S+1)m-1}\frac{r^{Sm-1}}{(r+1)^{(S+1)m-1}} (-z)^m
\\
&=\upsilon_{S,r}(z).
\end{align*}
Since 
$$
\lim_{m\to +\infty } \Big(\frac{\binom{(S+1)m-1}{m}}{(S+1)m-1}\frac{r^{Sm-1}}{(r+1)^{(S+1)m-1}}\Big)^{1/m} 
= \frac{r^S(S+1)^{S+1}}{S^S(r+1)^{S+1}}\le 1 
$$
with equality only for $r=S$, the assertion on the radius of convergence follows. 

Since $\binom{(S+1)m-1}{m}\le S(\frac{(S+1)^{S+1}}{S^S})^{m-1}$ and $\frac{S}{(S+1)m-1}\le \frac 1m$, for any $z$ such that $\vert z\vert<1$ (inside the circle of convergence), we have 
\begin{align*}
\left\vert \upsilon_{S,r}(z)-\frac{1}{r}-\frac{z}{r}\Big(\frac{r}{r+1}\Big)^S \right\vert &\le \frac{(r+1)S^{S}}{r(S+1)^{S+1}}
\sum_{m=2}^\infty \frac1m\Big(\frac{r^S(S+1)^{S+1}}{S^S(r+1)^{S+1}} \vert z\vert\Big)^m 
\\
&\le \frac{\vert z\vert }{r}\Big(\frac{r}{r+1}\Big)^S
\Big\vert\log\Big(1-\frac{r^S(S+1)^{S+1}}{S^S(r+1)^{S+1}}\vert z\vert\Big)\Big\vert.
\end{align*}
Hence, for any $\vert z\vert <1$, $1\le r\le S$, 
$$
r\upsilon_{S,r}(z)=1+z\Big(\frac{r}{r+1}\Big)^S \left(1+\theta 
\Big\vert\log\Big(1-\frac{r^S(S+1)^{S+1}}{S^S(r+1)^{S+1}}\vert z\vert\Big)\Big\vert \right)
$$
for some $\theta$ (depending on $S,r,z$) such that $\vert \theta\vert \le 1$.

We now choose  $r$ as any fixed increasing function of $S$  such that $r=o(S)$ as $S\to +\infty$. Then
$ 
 \frac{r^S(S+1)^{S+1}}{S^S(r+1)^{S+1}} \vert z \vert  
$ 
tends to $0$ as $S\to\infty$, so that 
$$
\upsilon_{S,r}(z)= \frac{1}{r}+\frac{z}r\Big(\frac{r}{r+1}\Big)^S (1+  o(1)).
$$
Therefore, $$
\frac{1}{\upsilon_{S,r}(z)} =r-rz\Big(\frac{r}{r+1}\Big)^S (1+  o(1)). 
$$
Since $\vert z\vert <1$, the real part of $1/\upsilon_{S,r}(z)$ is  positive  for any   $S$ sufficiently large (with respect to the choice of $r(S)$) and thus $1/\upsilon_{S,r}(z)$ coincides with $\tau_{S,r}(z)$. This concludes the proof of \eqref{eqordd}.

\medskip

\noindent {\bf Step 3.} We now prove that $\tau = \tau_{S,r}(z)$  belongs to the cut plane $\Omega$ and is indeed a solution of the equation $\varphi'(z,t)=0$, provided $r$ is any fixed increasing function of $S$ such that $r=o(S)$ and $S e^{-S/r} = o(1)$  as $n\to +\infty$,  and $S$ is large enough (with respect to the choice of $r(S)$). Since $\exp (\varphi'(z,\tau)) = \frac{z\tau^{S+1}}{(r-\tau)(\tau+1)^S} = 1$, we have $ \varphi'(z,\tau) \in 2i \pi \Z$ and 
$$
\frac1i  \varphi'(z,\tau)
 =\arg(z)+ (S+1)\arg(\tau)-\arg(r-\tau)-S\arg(\tau+1) .
$$
Since $r=o(S)$ and $Se^{-S/r}=o(1)$, we have
$r-rz(\frac r{r+1})^S(1+o(1)) =r(1 + \mathcal{O}( e^{-S/r}))  $ and \eqref{eqordd} yields:  
\begin{align*}
(S+1)\arg(\tau)&=(S+1)\arg(r)+\mathcal{O}(Se^{-S/r})=o(1), 
\\
S\arg(\tau+1)&=S\arg(r+1)+\mathcal{O}(Se^{-S/r})=o(1).
\end{align*}
Moreover 
$$\arg(r-\tau)=\arg\Big(rz\big(\frac{r}{r+1}\big)^S\Big)+o(1)=\arg(z)+o(1)
$$ 
since the cut we have made on $\arg(r-t)$ is not for $\arg(r-t) = \arg(z) \bmod 2\pi$ (here we use the alternative definition of $\Omega$ when $\arg (z) = \pi$, intended to have $\frac{-7\pi}8 < \arg (r-t) < \frac{9\pi}8$  in this case).  Therefore $\tau\in\Omega$ provided $S$ is large enough. Moreover
  $\frac1i  \varphi'(z,\tau)$ tends to 0 as $S\to\infty$, and belongs to $ 2 \pi \Z$: it is 
   $0$ if $S$ is large enough with respect to the choice of $r(S)$.

\medskip

\noindent {\bf Step 4.} We now prove that $g(\tau_{S,r}(z))\neq 0$ and $\varphi''(z,\tau_{S,r}(z))\neq 0$  provided $r$ is any fixed increasing function of $S$ such that $r=o(S)$ as $n\to +\infty$, and $S$ is large enough (with respect to the choice of $r(S)$). 

Since $(\frac{r}{r+1})^S = o(1)$, \eqref{eqordd}   yields
\begin{equation}\label{eq:608}
g(\tau)=  \frac{1}{\tau^{\beta_j+(S+1)/2}(\tau+1)^{3S/2}(r-\tau)^{1/2}}  =
\frac{1+o(1)}{z^{1/2}  r^{\beta_j+S+1} (r+1)^{S }} 
\end{equation}
and 
\begin{equation}
\varphi''(z,\tau) = \frac{S+1}{\tau} + \frac{1}{r-\tau} -\frac{S}{\tau+1} 
= \frac{(r+1)^S}{r^{S+1}z} \big(1+o(1)\big) \label{eq:609}
\end{equation}
provided $Se^{-S/r}=o(1)$ for~\eqref{eq:609}. 
The right-hand sides of~\eqref{eq:608} and \eqref{eq:609} are both non-zero if $S$ is large enough with respect to the choice of $r(S).$

\medskip

\noindent {\bf Step 5.} In this step we choose $r(S)$ as in the statement of the lemma, so that all the previous steps are simultaneously valid provided $S$ is large enough with respect to the choice of $r(S)$.   We want to determine an {\em admissible path} passing through $\tau_{S,r}(z)$, i.e. a path  along which $t\mapsto \textup{Re}(\varphi(z, t))$ has  a unique global maximum at $t=\tau_{S,r}(z) $. This determination process is rather lengthy  as we have to consider three cases:  $\textup{Re}(z)> 0$, $\textup{Re}(z)<0$ and $\textup{Im}(z)\neq 0$, and $\textup{Re}(z)<0$ and $\textup{Im}(z)=0$. 
Note that similar computations are done in~\cite{nishimoto, zudilin} for the same kind of purpose.  In particular, analogues of the contours $\chemz $, $\chemti $ and $\chemhat $ constructed below are also considered in these papers.
Throughout the computations we always assume $S$ to be sufficiently large. 

\medskip

\noindent $\bullet$ {\bf Case $\textup{Re}(z)> 0$.}  \eqref{eqordd} yields   $0<\textup{Re}(\tau)<r$ so that the  vertical line $\chemz$ passing through $\tau$ is inside the strip $0< \textup{Re}(t)<r$ and we are going to prove that it is admissible.  

We set $\uuu=\textup{Re}(\tau)$,  $\chemz=\{\uuu+iy, y\in \mathbb R\}$ and $w_0(y)=\textup{Re}\big(\varphi(z,\uuu+iy)\big)$. We have
$$
w_0'(y)=-\textup{Im}\big(\varphi'(z, \uuu+iy)\big) = -\arg(z)-(S+1)\arg(\uuu+iy)+\arg(r-\uuu-iy)+S\arg(1+\uuu+iy).
$$
Hence
\begin{equation} \label{eqlim}
\lim_{y\to -\infty} w_0'(y)=\pi-\arg(z)\ge 0, \quad \lim_{y\to +\infty} w_0'(y)=-\pi-\arg(z)\le 0.
\end{equation}
Moreover  $\textup{Re}(r-\uuu-iy)=r(\frac{r}{r+1})^S\textup{Re}(z)(1+o(1))>0$, $\textup{Re}(\uuu+iy)=r(1+o(1))>0$, $\textup{Re}(\uuu+1+iy)=(r+1)(1+o(1))>0$ so that
$$
w_0'(y)=-\arg(z)-(S+1)\arctan\Big(\frac{y}{\uuu}\Big)-\arctan\Big(\frac{y}{r-\uuu}\Big)+S\arctan\Big(\frac{y}{1+\uuu}\Big)
$$
and 
\begin{align*}
w_0''(y)&= -\frac{(S+1)\uuu}{\uuu^2+y^2} -\frac{r-\uuu}{(r-\uuu)^2+y^2} +S\frac{(1+\uuu)}{(1+\uuu)^2+y^2}
\\
&=\frac{- N(y^2)}{(\uuu^2+y^2)((r-\uuu)^2+y^2)((1+\uuu)^2+y^2)} 
\end{align*}
upon letting   $N(x)=ax^2+bx+c$ where
$$
a=r-S<0 , \quad b= -Sr^2+r^2\uuu+2Sr\uuu+2r\uuu+S\uuu+r, 
$$
$$
c= \uuu(1+\uuu)(r-\uuu)(Sr+r\uuu-S\uuu+r)>0.
$$
The equation $N(x)=0$ as a negative root (because $ac<0$) and another one asymptotically equal to 
$ r^2\big(1+o(1)\big)$.
This root is $> \textup{Im}(\tau)^2$  because \eqref{eqordd} yields
$
\textup{Im}(\tau)= o(1).
$
Hence 
$w_0''(y)=0$ has exactly two solutions: a positive and negative one, with $\textup{Im}(\tau)$  strictly in between. Since $w_0'(\textup{Im}(\tau)) =0$, \eqref{eqlim} ensures 
 that $w_0'(y)$ vanishes   at $\textup{Im}(\tau)$,  is positive for $y<\textup{Im}(\tau)$ and negative for $y>\textup{Im}(\tau)$. Hence $w_0(y)$ is maximal at $\textup{Im}(\tau)$; this completes the proof of this case.

\medskip

\noindent $\bullet$ {\bf Case $\textup{Re}(z)<0$ and $\textup{Im}(z)\neq 0$.} In this case, the  vertical  line passing through $\tau$  is no longer inside the strip $0< \textup{Re}(t)<r$ and we have to deform it. We assume that $\textup{Im}(z)<0$, the other case being delt with similarly; then  $\textup{Im}(\tau)>0$. 

We first want to determine a segment passing through $\tau$ along which  $t\mapsto \textup{Re}(\varphi(z,t))$ admits a local maximum at 
 $t=\tau$. Let $\beta=\arg(\varphi''(z,\tau))\in (-\pi,\pi]$ and $z=\rho e^{i\delta}$ with $\rho > 0$ and $\delta\in (-\pi,-\pi/2)$ because  $\textup{Re}(z)<0$ and $\textup{Im}(z)<0$. Then, from \eqref{eq:609} in Step 4, we have
$$
\varphi''(z,\tau) =\frac{1}{r\rho} \Big(\frac{r+1}{r}\Big)^S e^{-i\delta} \big(1+o(1)\big)
$$
so that $\beta= -\delta+o(1)$. Therefore any $\theta\in\R$ such that $\cos(2\theta-\delta) < 0$ satisfies also   $\cos(2\theta+\beta)<0$ provided $S$ is large enough (in terms of $\theta$); then $\theta$ is said to be admissible. Obviously $\theta=0$ and any $\theta$ sufficiently close to $\pi+\delta$ are admissible.  
 By the theory of steepest paths of analytic functions (see \cite[pp. 255--258]{blehan}), for any admissible $\theta$ there exists $\eta>0$ such that the function $t\mapsto \textup{Re}(\varphi(z,t))$ admits a unique global maximum at $t=\tau$ where 
$t$ is on the segment $\{\tau+e^{i\theta}y$, $\vert y\vert\le \eta\}$. 

This suggests to define  a polygonal path $\chemti$ as the union $\chemti=\chemu\cup \chemd\cup \chemt$ where $\chemu=\{r-iy, y\ge 0\}$, $\chemd=[r, \tau]$ and $\chemt=\{\tau+y, y\ge 0\}$: $\chemu$ is a  vertical  half-line, $\chemd$ is a segment and $\chemt$ is an   horizontal  half-line.  We claim that $t\mapsto \textup{Re}(\varphi(z,t))$ admits a unique global maximum at $t=\tau$ when $t$ varies in $\chemti$; this function is continuous on $\chemti$ and can be differentiated on  $\chemti \setminus \{r,\tau\}$. 

First,  $w_1(y)=\textup{Re}(\varphi(z, r-iy))$ is decreasing on $[0,+\infty)$ since for any $y>0$: 
\begin{align*}
w_1'(y)=\textup{Im}\big(\varphi'(z, r-iy)\big)
&=\arg(z)+(S+1)\arg(r-iy)-\arg(iy)-S\arg(1+r-iy)
\\
&=\arg(z)-(S+1)\arctan\Big(\frac{y}r\Big)-\frac{\pi}{2}+S\arctan\Big(\frac{y}{r+1}\Big)< 0
\end{align*}
because $\arg(z)\le -\frac{\pi}{2}$.

Let us now prove that $w_3(y)=\textup{Re}(\varphi(z,\tau+y))$ is decreasing on $ [0,+\infty)$. We have  
$$
w_3'(y)=\textup{Re}\big(\varphi'(z,\tau+y)\big)
 =\log\left\vert \frac{z(\tau+y)^{S+1}}{(r-\tau-y)( \tau+y+1)^S}\right\vert \neq 0 
$$
for any $y>0$ using Step 1: 
 the only $t$  in $\textup{Re}(t)\ge 0$ such that $\frac{zt^{S+1}}{(r-t)(t+1)^S}=1$ is $t=\tau$. Therefore $w_3$ is monotonic; since $\theta=0$ is admissible it is decreasing. 

It remains to prove that $t\mapsto \textup{Re}(\varphi(z,t))$ admits a unique global maximum at $t=\tau$ when $t$ varies in $\chemd$. We parametrize 
$\chemd$ as $\{\tau+ye^{i\gamma}, y \in[  0, \vert  U\vert ]\}$ with (by definition) $U = r- \tau $ and $\gamma=  \arg(  U) = \delta+o(1)$ using \eqref{eqordd}; then $\gamma\in (-\pi, -\pi/2)$. Let $ w_2(y)=\textup{Re}(\varphi(z,\tau+ye^{i\gamma}))$. Then
$$w_2'(y) =\cos(\gamma) \textup{Re}\big(\varphi'(z,\tau+ye^{i\gamma})\big)-\sin(\gamma)\textup{Im}\big(\varphi'(z,\tau+ye^{i\gamma})\big).$$
The function $\ell(y)=  \textup{Re}\big(\varphi'(z,\tau+ye^{i\gamma})\big) = \log\left\vert \frac{z(\tau+ye^{i\gamma})^S}{(r-\tau-ye^{i\gamma})(\tau+1+ye^{i\gamma})^S}\right\vert$ satisfies $\ell(0)=0$ and $\ell(y)\neq 0$ for any $y \in [ 0, \vert  U\vert)$ (using Step 1 again); moreover $
\lim_{y\to  \vert  U\vert} \ell(y)=+\infty
$. Therefore we have $\ell(y)>0$ for any $y \in[0, \vert  U\vert)$.  
We now analyse the term $a(y)  = \textup{Im}\big(\varphi'(z,\tau+ye^{i\gamma})\big)$; we have
\begin{align*}
a(y)&=\arg(z)+(S+1)\arg(\tau+ye^{i\gamma})-S\arg(\tau+1+ye^{i\gamma})-\arg(r-\tau-ye^{i\gamma})
\\
&= \delta + (S+1) ( \gamma+\arg( \tau e^{-i\gamma}+y)) - S( \gamma+\arg((\tau+1)e^{-i\gamma}+y)) - \arg( |U| e^{i\gamma}-y  e^{i\gamma}) \\
& =\pi+\delta-(S+1)\arctan\left(\frac{r\sin(\gamma)}{r\cos(\gamma)+y - \vert  U\vert }\right)+S\arctan\left(\frac{(r+1)\sin(\gamma)}{(r+1)\cos(\gamma)+y - \vert U\vert }\right)
\end{align*}
since for $\zeta = \tau e^{-i\gamma}+y$ we have $\arg(\zeta) = \pi + \arctan(\frac{\Im (\zeta)}{\Re (\zeta)})$.  
This function is decreasing on $[0, \vert  U\vert]$ because 
\begin{eqnarray*}
a'(y)&=&(S+1)\frac{r \sin (\gamma)}{r^2\sin^2(\gamma)+(r\cos(\gamma)+y -  \vert U\vert  )^2}\\
&& \quad \quad \quad \quad -S\frac{(r+1) \sin (\gamma)}{(r+1)^2\sin^2(\gamma)+((r+1)\cos(\gamma)+y -  \vert  U\vert )^2}\\
&\leq & \frac{(S+1)r\sin \gamma}{(r+|U|)^2} -  \frac{ S (r+1) \sin \gamma}{(r+1)^2} < 0
\end{eqnarray*}
since $\frac{r}{(r+|U|)^2} > \frac1{r+1}$ because $|U|<\frac14$ (using Step 2). 
In Step 3, we proved that $a(0)=0$, so that $a(y)\leq 0$ for any $y \in[0, \vert  U\vert]$.  
  It follows that   
$$
 w_2'(y) = \cos(\gamma)\ell(y)-\sin(\gamma)a(y) <  0
$$
for any $y\in [0, \vert  U\vert]$.

We have thus proved that $t\mapsto \textup{Re}(\varphi(z,t))$ admits a unique global maximum at $t=\tau$ when $t$ varies in $\chemti$. We cannot integrate directly over $\chemti$ because $r$ is a singularity of $\varphi(z,\cdot)$. Hence, we slightly deform $\chemti$ around the 
``corner'' of the path at $r$: we replace that corner with an arc of circle of center $r$ and small positive radius $\kappa$, in which $\arg(r-t)$ varies in $[\gamma, \pi/2]$. We 
connect this arc with the remaining parts of $\chemu$ and $\chemd$, and with $\chemt$,  to get a new path $\chemti'$. By continuity of $t \mapsto\textup{Re}(\varphi(z,t))$ in this region, we can take $\kappa$ small enough so that it still admits a unique global maximum at $t=\tau$ when $t$ varies in $\chemti '$.

\medskip

\noindent $\bullet$ {\bf Case $\textup{Re}(z)<0$ and $\textup{Im}(z)= 0$.} In this case, $\tau$ is a real number greater than $r$. As in the previous case we obtain $\arg(\varphi''(z,\tau))=-\pi+o(1) \bmod 2\pi$: the  angles $\theta$ such that   $\cos(2\theta-\pi)<0$ are admissible, for instance $\theta=0$.  This suggests to define a polygonal path $\chemhat$ as the union $\chemhat=\chemq\cup \chemc$ where $\chemq=\{r+iy, y\ge 0\}$ and $\chemc=\{r+y, y\ge 0\}$. Since $\Omega = \C\setminus ((-\infty, 0)\cup (r + e^{i\pi/8}\R_+))$ in the present case, $\chemq$ and $  \chemc$  are contained in $\Omega\cup\{r\}$. We claim that $t\mapsto \textup{Re}(\varphi(z,t))$ admits a unique global maximum at $t=\tau>r$ when $t$ varies in $\chemhat$. 

Letting $w_5(y) = \varphi(z,y)$ we obtain (as for $w_3$  in the previous case) that $w_5'(y)$ vanishes at $y=\tau$, is positive for $r < y < \tau$ and negative for $y>\tau$. 
Hence, $y\mapsto \textup{Re}(\varphi(z, y))$ admits a unique maximum on $[r, +\infty)$ achieved at 
$y=\tau$. Thus to prove the claim, it remains to prove that  $w_4(y)=\textup{Re}(\varphi(z, r+iy))$ is decreasing on $[0,+\infty)$. Now, as for $w_0$ in the case $\Re(z)>0$  we have for any $y>0$:
\begin{align*}
 w_4'(y)  
&=-\arg(z)-(S+1)\arg(r+iy)+\arg(-iy)+S\arg(1+r+iy)\\
&=-\frac{3\pi}2-(S+1)\arctan\Big(\frac{y}r\Big)+S\arctan\Big(\frac{y}{r+1}\Big) < 0
\end{align*}
and the claim is completely proved. Again, we cannot integrate directly over $\chemhat$ because $r$ is a singularity of $\varphi(z,\cdot)$. Hence, we slightly deform $\chemhat$ around the ``corner'' of the path at $r$: we replace that corner with an arc of circle of center $r$ and small positive radius $\kappa$, contained in the cut plane $\Omega$. We 
connect this arc with the remaining parts of $\chemq$ and $\chemc$ to get a new path $\chemhat'$. By continuity of $t \mapsto\textup{Re}(\varphi(z,t))$ in this region, we can take $\kappa$ small enough and ensure that it still admits a unique global maximum at $t=\tau$ when $t$ varies in $\chemhat'$.

\bigskip

\noindent {\bf Step 6.} We are now  in position to conclude the proof of Lemma~\ref{lem:new2}. We recall that 
$$\grosb 
= (2\pi)^{(S-r+2)/2}\kappa_j\cdot \frac{\log(n)^{s_j}}{n^{(S+r)/2+\beta_j }}\int_{c-i \infty }^{c+i \infty }
g(t) e^{n\varphi(-\alpha/\xi_j,t)}\left(1+\mathcal{O}\Big(\frac{1}{\log(n)}\Big)\right)dt$$ 
where the constant in $\mathcal{O}$ is uniform in $t$. Depending on the location of $-\alpha/\xi_j$ in the open unit disk (with respect to the three cases in Step 5), we move the integration path from the vertical line $\Re(t) = c$ to the path $\chemz$, $\chemti '$ or $\chemhat'$ where the orientation is from $\textup{Im}(t)\leq 0$ to $\textup{Im}(t)>0$. In the previous steps, we have done everything to ensure that the saddle point method (see~\cite[Chapitre IX]{dieudonne} or \cite[Proposition~7]{firi}) can be applied to this path and we get  
$$
\int_{c-i \infty }^{c+i \infty }
g(t) e^{n\varphi(-\alpha/\xi_j,t)}\left(1+\mathcal{O}\Big(\frac{1}{\log(n)}\Big)\right)dt = 
g_j \cdot \sqrt{\frac{2\pi}{- n\psi_j}}\cdot e^{\varphi_j n}\cdot \big(1+o(1)\big)
$$ 
provided $r(S)$ is chosen as in the statement of Lemma~\ref{lem:new2}  and   $S$ is large enough. This concludes the proof of 
  Lemma~\ref{lem:new2}, since $\kappa_j\neq 0$ (using Lemma \ref{lem:new1}).
 \end{proof}

\subsection{Asymptotic behavior of $T_{S,r,n}(1/\alpha)$}

We now state our final resut, the first part of which immediately comes from combining Lemmas~\ref{lem:Tserint} and \ref{lem:new2}. Recall that $s_j$, $\beta_j$, $\kappa_j$ have been defined in  Lemma \ref{lem:new1}, $g_j$, $\varphi_j$, $\psi_j$ just before  Lemma~\ref{lem:new2}.

\begin{prop}\label{prop:asympT} Let us assume that $0<\vert \alpha\vert <R$, and  $r=r(S)$ is an increasing function of $S$ such that $r=o(S)$ and $Se^{-S/r}=o(1)$ as $S\to +\infty$. Then if $S$ is large enough (with respect to the choice of the function $r(S)$), the following estimate holds:
for any $j=1, \ldots, p$, we have $\kappa_jg_j\psi_j\neq 0$ and as $n\to +\infty$
\begin{equation}\label{eq:asympT2}
T_{S,r,n}(1/\alpha)=\frac{(2\pi)^{(S-r+1)/2}(-1)^{rn}}{n^{( S+r-1)/2}}\sum_{j=1}^p \Big(\frac{\kappa_jg_j}{\sqrt{ \psi_j}}\cdot  n^{-\beta_j}\log(n)^{s_j}
e^{\varphi_j n} \cdot 
\big(1+o(1)\big)\Big).
\end{equation}
Moreover, if $r^\omega e^{-S/r}=o(1)$ for any $\omega>0$ then  the numbers $e^{\varphi_j}$ (for $j=1, \ldots, p$)  are pairwise distinct.
\end{prop}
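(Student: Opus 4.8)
The first assertion — the non‑vanishing $\kappa_jg_j\psi_j\neq0$ together with \eqref{eq:asympT2} — is, as announced, a bookkeeping matter. Since $r=r(S)\to+\infty$, we have $r>u$ for $S$ large, so Lemma~\ref{lem:Tserint} applies and $T_{S,r,n}(1/\alpha)=\sum_{j=1}^p\frac{(-1)^{rn}n}{2i\pi}\grosb$ for $n$ large. Substituting the asymptotics of $\grosb$ given by Lemma~\ref{lem:new2} (which already provides $\kappa_jg_j\psi_j\neq0$) and collecting constants via $\frac{n}{2i\pi}(2\pi)^{(S-r+3)/2}=\frac{(2\pi)^{(S-r+1)/2}}{i}n$, $\frac{1}{i\sqrt{-\psi_j}}=\frac{1}{\sqrt{\psi_j}}$ and $n\cdot n^{-(S+r+1)/2-\beta_j}=n^{-(S+r-1)/2-\beta_j}$ reproduces \eqref{eq:asympT2} directly.

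For the pairwise distinctness I would fix $i\neq j$ and set $\beta=\xi_j/\xi_i$, a nonzero algebraic number different from $1$ since the $\xi_k$ are pairwise distinct. Because $r$ and $S$ are \emph{integers}, every power in the first formula of \eqref{eqtrois} is an integer power, so $e^{\varphi_k}=(r-\tau_k)^r(\tau_k+1)^{-S}$ without branch ambiguity. Writing $\xi=\bigl(\tfrac{r}{r+1}\bigr)^S$ and using \eqref{eqordd} in the form $r-\tau_k=\tfrac{r\alpha}{\xi_k}\xi(1+\eta_k)$ with $\eta_k=O(\tfrac{S}{r}e^{-S/r})$, $\tau_k+1=(r+1)(1+w_k)$ with $w_k=O(e^{-S/r})$, together with $\bigl(1+O(\tfrac{S}{r}e^{-S/r})\bigr)^r=e^{O(Se^{-S/r})}$ and $\bigl(1+O(e^{-S/r})\bigr)^S=e^{O(Se^{-S/r})}$, one gets
\[
\frac{e^{\varphi_i}}{e^{\varphi_j}}=\Bigl(\frac{r-\tau_i}{r-\tau_j}\Bigr)^{r}\Bigl(\frac{\tau_j+1}{\tau_i+1}\Bigr)^{S}=\beta^{r}\bigl(1+O(Se^{-S/r})\bigr).
\]
Hence, if $e^{\varphi_i}=e^{\varphi_j}$ for arbitrarily large $S$, then $|\beta^r-1|=O(Se^{-S/r})\to0$. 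I would then rule this out by cases on $\beta$. If $|\beta|\neq1$ it is immediate, since $|\beta^r-1|$ tends to $1$ or to $+\infty$. If $|\beta|=1$ but $\beta$ is not a root of unity, then $\beta^r\neq1$ for all $r$, and Baker's theorem on linear forms in logarithms of algebraic numbers gives an \emph{effective} lower bound $|\beta^r-1|\geq c(\beta)\,r^{-\kappa(\beta)}$ with $c(\beta),\kappa(\beta)>0$; as $S=O(r^2)$ for our $r$, the hypothesis $r^{\kappa(\beta)+2}e^{-S/r}=o(1)$ contradicts $|\beta^r-1|=O(Se^{-S/r})$. If $\beta$ is a root of unity of order $q$ with $q\nmid r$, then $\beta^r$ is a root of unity different from $1$, so $|\beta^r-1|\geq2\sin(\pi/q)>0$, again a contradiction.

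The remaining case — and the one I expect to be the main obstacle — is $\beta$ a root of unity of order $q$ with $q\mid r$, so that $\beta^r=1$ and the crude estimate only yields $e^{\varphi_i}/e^{\varphi_j}=1+O(Se^{-S/r})$; I then have to show this is genuinely $\neq1$. The plan is to refine \eqref{eqordd} by keeping the first two terms of the series defining $\upsilon_{S,r}(z)$ in Step~2 of the proof of Lemma~\ref{lem:new2}, which gives $\tau_{S,r}(z)=r\bigl(1-z\xi+(1+\tfrac{S}{r+1})z^2\xi^2+O(\tfrac{S^2}{r^2}\xi^3)\bigr)$, hence $r-\tau_k=rz_k\xi\bigl(1-(1+\tfrac{S}{r+1})z_k\xi+\cdots\bigr)$ and $\tau_k+1=(r+1)\bigl(1-\tfrac{r}{r+1}z_k\xi+\cdots\bigr)$ with $z_k=-\alpha/\xi_k$. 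Substituting these into $e^{\varphi_i}/e^{\varphi_j}$ and expanding the logarithms, the contributions of order $\tfrac{rS}{r+1}\xi$ coming from $(r-\tau_k)^r$ and from $(\tau_k+1)^{-S}$ cancel, leaving
\[
\frac{e^{\varphi_i}}{e^{\varphi_j}}=\beta^{r}\,e^{-r(z_i-z_j)\xi(1+o(1))}=1-r(z_i-z_j)\,\xi\,(1+o(1)),
\]
using $\beta^r=1$. Since $\xi_i\neq\xi_j$ we have $z_i\neq z_j$, so $r(z_i-z_j)\xi$ is nonzero; and the hypothesis on $r$ forces $S/r\to+\infty$ faster than $\log S$, so $r\xi$ dominates the error terms of order $\mathrm{poly}(S)\cdot\xi^2$. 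Hence $e^{\varphi_i}/e^{\varphi_j}\neq1$ for $S$ large, and all cases are settled.

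The two places requiring genuine care are thus the first‑order cancellation in this last case (which is exactly why one needs the next term of \eqref{eqordd}, and why a purely $o(1)$‑style argument fails) and the invocation of Baker's theorem — it is the latter that dictates the strengthened hypothesis $r^\omega e^{-S/r}=o(1)$ for every $\omega>0$, since it must beat an effective power‑of‑$r$ lower bound. Everything else (the constant juggling in the first assertion and the various elementary estimates above) is routine given Lemmas~\ref{lem:Tserint} and~\ref{lem:new2} and the expansion \eqref{eqordd}.
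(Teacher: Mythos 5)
Your proof of the first assertion follows the paper exactly: combine Lemmas~\ref{lem:Tserint} and~\ref{lem:new2} and collect constants, noting that Lemma~\ref{lem:new2} already gives $\kappa_jg_j\psi_j\neq0$.

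For the pairwise distinctness you take a genuinely different route. The paper fixes $j\neq\ell$, uses the expansion \eqref{eqprecis} twice (first $\vert z_j\vert=\vert z_\ell\vert$, then $\Re z_j=\Re z_\ell$) to reduce to $z_\ell=\overline{z_j}$, deduces by conjugation symmetry that $e^{\varphi_j}$ is real, and then analyzes the argument $r\theta_j\bmod\pi$ via the theory of linear forms in logarithms (the fact that $\theta_j/\pi$ is not a Liouville number), ending with $z_j$ real, a contradiction. You instead work directly with the ratio $\beta=\xi_j/\xi_i$ and split into cases according to whether $\beta$ is a root of unity and whether its order divides $r$. Both arguments need the same two ingredients — the first non-trivial term of \eqref{eqprecis} and an effective lower bound from Baker-type theorems for $|\beta^r-1|$ or $|r\theta_j-k\pi|$ — so your route is a legitimate reorganization. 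One could argue your case analysis is slightly more elementary in spirit (no conjugation trick), at the price of a less symmetric treatment.

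There is, however, one incorrect step in your Baker case: the assertion that ``$S=O(r^2)$ for our $r$'' does not follow from the hypotheses. The conditions $r=o(S)$, $Se^{-S/r}=o(1)$, and $r^\omega e^{-S/r}=o(1)$ for every $\omega$ are all satisfied, for instance, by $r(S)=\log S$ or $r(S)=S^{1/10}$, for which $S/r^2\to\infty$. So ``$r^{\kappa+2}e^{-S/r}=o(1)$ plus $S=O(r^2)$'' is not a valid route to $r^\kappa Se^{-S/r}=o(1)$. The conclusion you want is nevertheless true and easy to salvage from the stated hypotheses alone: writing $t=S/r$, one has $r^\kappa Se^{-S/r}=r^{\kappa+1}te^{-t}=\bigl(te^{-t/2}\bigr)\bigl(r^{2\kappa+2}e^{-t}\bigr)^{1/2}$, and both factors tend to $0$ since $t\to\infty$ and $r^{2\kappa+2}e^{-S/r}=o(1)$. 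Even better: the cancellation you already invoke in the $q\mid r$ case (between the first-order contributions of $(r-\tau_k)^r$ and $(\tau_k+1)^{-S}$) upgrades the crude bound $|\beta^r-1|=O(Se^{-S/r})$ to $|\beta^r-1|=O(r(\tfrac r{r+1})^S)$ uniformly, after which the hypothesis $r^{\omega}e^{-S/r}=o(1)$ for all $\omega$ beats Baker's $r^{-\kappa(\beta)}$ directly. Using that sharper bound from the start would make the Baker case cleaner and remove the need for any growth comparison between $S$ and $r$. Apart from this fixable slip (and the cosmetic $\tfrac{S}{(r+1)^2}$ vs.\ $\tfrac{S}{r+1}$ discrepancy in the intermediate display, which you in fact resolve correctly in your final formula for $\tau_{S,r}$), the argument is sound.
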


The only new property in Proposition \ref{prop:asympT} is that the numbers $e^{\varphi_j}$ are pairwise distinct; we shall prove it below. All the conditions on $r$ are satisfied if $r=[S/\log(S)^{1+\varepsilon}]$ for any fixed $\varepsilon>0.$ 
Let us now deduce Lemma \ref{lem:7nv} (stated in \S \ref{subsec53}) from Proposition \ref{prop:asympT}. Let $a = \max(\Re(\varphi_1),\ldots, \Re(\varphi_p))$, and denote by $J$ the non-empty set of all $j\in\{1,\ldots,p\}$ such that $\Re (\varphi_j) = a$. Let $(\kappa,\lambda)$  denote the maximal value of $(-\beta_j - \frac12(S+r-1), s_j)$, $j\in J$, with respect to lexicographical order. Denote by $j_1$, \ldots, $j_Q$ (with $Q\geq 1$) the pairwise distinct elements $j\in J$ such that 
$(-\beta_j - \frac12 (S+r-1), s_j) = (\kappa,\lambda)$. Then in the sum \eqref{eq:asympT2} we may restrict to $j\in \{j_1, \ldots, j_Q\}$. The numbers $\zeta_q = (-1)^r \exp(i \Im (\varphi_{j_q}))$, $1\leq q \leq Q$, are  pairwise distinct because $ \varphi_{j_1}$, \ldots, $ \varphi_{j_Q}$ are; and the numbers $c_q = (2\pi)^{(S-r+1)/2} \kappa_{j_q}g_{j_q}/\sqrt{ \psi_{j_q}}$ are non-zero. At last, we have $0 < a := | e^{ \varphi_{j_q}}| \le \frac{r^r}{r^S}$ if $S$ is large enough, using the first expression in \eqref{eqtrois} and the fact that $\tau_{j_q}$ tends to $r$ as $S\to\infty$. This concludes the proof of Lemma \ref{lem:7nv}.

\begin{proof}
We only need to prove the assertion on the numbers $e^{\varphi_j}$. There is nothing to prove if $p=1$ and we now assume that $p\ge 2$. Letting $z_j=-\alpha/\xi_j$,  \eqref{eqordd} and the second expression of \eqref{eqtrois} yield
\begin{eqnarray}
e^{\varphi_j}
&=& \frac{z_j^r r^{r(S+1)}}{(r+1)^{S(r+1)}} \Big(\frac{\tau_j}{r}\Big)^{r(S+1)} \Big(\frac{\tau_j+1}{r+1}\Big)^{-S(r+1)} \nonumber \\
&=& \frac{z_j^r r^{r(S+1)}}{(r+1)^{S(r+1)}} \Big( 1- rz_j \Big( \frac{r}{r+1}\Big)^S \big(1+o(1)\big)\Big) \label{eqprecis}
\end{eqnarray}
where the error term $o(1)$ depends on $j$ (and tends to 0 as $S\to\infty$). Now assume that $e^{\varphi_j} =  e^{\varphi_\ell}$ with $j\neq \ell$ (so that $z_j\neq z_\ell$). Taking the limit of $|e^{\varphi_j}  (r+1)^{S(r+1)} r^{-r(S+1)}|^{1/r}$ yields $|z_j| = | z_\ell |$ (provided $S$ is large enough). Considering the next term in the expansion given by \eqref{eqprecis}, the equality $|e^{\varphi_j}|  =  |e^{\varphi_\ell}|$ then yields $\Re(z_j) = \Re(z_\ell)$, so that $z_\ell = \overline{z_j}$. This implies $\tau_\ell = \overline{\tau_j}$, and $ e^{\varphi_\ell} = \overline{ e^{\varphi_j}}$ using  \eqref{eqtrois}, so that $e^{\varphi_j} =  e^{\varphi_\ell}$ is real. Let $\theta_j = \arg(z_j)$; then  \eqref{eqprecis} yields $r\theta_j - k\pi = \mathcal{O}(r e^{-S/r})$ for some $k\in\Z$. By assumption this implies $r\theta_j - k\pi = o(r^{-\omega})$ for any $\omega>0$. However $z_j$ is algebraic, and the theory of linear forms in logarithms shows that $\theta_j /\pi$ is not a Liouville number (see for instance \cite[Chapter 4]{EMS}). Therefore  $\theta_j /\pi$ is rational, and $r\theta_j - k\pi = 0$. Using  \eqref{eqprecis} again we obtain that $z_j$ is real, so that $z_\ell =  \overline{z_j} = z_j$. This contradiction  completes the proof of Proposition \ref{prop:asympT}.
\end{proof}

\section{Remark on the case of non-negative coefficients}  \label{secrempos}

We conclude this paper with a methodological remark. The saddle point method is a very powerful and general method, but its effective implementation can be long and difficult. This is undoubtedly the case in our situation as 
 \S\ref{sec:asympT} shows. Hence,   
it is useful to have alternative methods that can be applied at least in special (and still important) cases. Such a method exists   
when $A_k\ge 0$ for all large enough $k$:   the conclusion of Theorem~\ref{theo:1} can then be obtained faster, at least if $\alpha$ is also assumed to be a positive algebraic number. For this, we use a representation of $T_{S,r,n}(z)$ as a {\em real} integral instead of the {\em complex} integral representation of  Lemma~\ref{lem:Tserint}.  In~\eqref{eq:104} and~\eqref{eq:1044} below, we make no assumption on the $A_k$'s.
\begin{prop}\label{lem:5} Let $z$ be such that $\vert z\vert >1/R$. We have 
\begin{equation}\label{eq:104}
T_{S,r,n}(z)=\frac{z^{-rn}}{n!^r}\int_{[0,1]^S} F^{(rn)}\Big(\frac{t_1\cdots t_S}z\Big) \prod_{j=1}^S t_j^{rn}(1-t_j)^n dt_j, \quad n\ge 0, 
\end{equation}
and
\begin{equation}\label{eq:1044}
\limsup_{n\to +\infty} \vert T_{S,r,n}(z)\vert^{1/n} \le \frac{1}{r^{S-r}}.
\end{equation}
Moreover, if $F$ is not a polynomial,   $z>1/R$, and $A_k\ge 0$ for all $k$ large enough, then
\begin{equation}\label{eq:10444}
\liminf_{n\to +\infty}  T_{S,r,n}(z)^{1/n} \ge \frac{1}{D^r z^r}\left(\frac{r}{r+1}\right)^{rS}\frac{1}{(r+1)^{S-r}}>0
\end{equation}
with $D$  such that $D_n \leq D^{n+1}$ for any $n$, where
$D_n$ is  the smallest positive integer such that $D_nA_k$ is an algebraic integer for any $k\le n$. 
\end{prop}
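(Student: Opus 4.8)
The plan is to establish \eqref{eq:104} first, then derive the two asymptotic bounds \eqref{eq:1044} and \eqref{eq:10444} from it. For \eqref{eq:104}, I would start from the series definition of $T_{S,r,n}(z)$ and use the classical Beta-integral identity
$$
\int_0^1 t^{k+rn}(1-t)^n\, dt = \frac{(k+rn)!\, n!}{(k+rn+n+1)!} = \frac{n!\, (k+rn)!}{(k+(r+1)n+1)!},
$$
iterated $S$ times. More precisely, expand $F^{(rn)}(w) = \sum_{k\ge rn} k(k-1)\cdots(k-rn+1) A_k w^{k-rn}$, substitute $w = t_1\cdots t_S/z$, and integrate term by term over $[0,1]^S$ (justified by absolute convergence since $|t_1\cdots t_S/z| < 1/|z| < R$ for $t_j\in[0,1]$, recalling $|z|>1/R$). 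Each of the $S$ integrations contributes one factor $\frac{n!\,(k-rn+m)!}{(k-rn+m+n+1)!}$ for successive indices; telescoping these factors against the falling factorial $k(k-1)\cdots(k-rn+1) = \frac{k!}{(k-rn)!}$ and the prefactor $\frac{z^{-rn}}{n!^r}$ should reproduce exactly $n!^{S-r}\frac{(k-rn+1)_{rn}}{(k+1)_{n+1}^S} A_k z^{-k}$. This is a routine but slightly bookkeeping-heavy computation with Pochhammer symbols; the only subtlety is keeping track of the shift in $k$ through each integration, and noting that the terms with $k < rn$ vanish because $F^{(rn)}$ kills them.

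For \eqref{eq:1044}, I would bound the integral in \eqref{eq:104} crudely: on $[0,1]^S$ we have $t_j^{rn}(1-t_j)^n \le \left(\frac{r}{r+1}\right)^{rn}\left(\frac{1}{r+1}\right)^n$ (the maximum of $t^r(1-t)$ on $[0,1]$ is attained at $t = r/(r+1)$), so $\prod_j t_j^{rn}(1-t_j)^n \le \left(\frac{r^r}{(r+1)^{r+1}}\right)^{nS}$. The factor $F^{(rn)}(w)$ on the compact set $\{|w|\le 1/|z| < R\}$ is $O(C^{rn}(rn)!)$ for a suitable $C$ depending on $F$ and $z$ (Cauchy estimates on a slightly larger disk inside the disk of convergence), which combined with $\frac{1}{n!^r} \sim (rn)!^{-1} \cdot \text{(polynomial)} \cdot r^{rn}$-type estimates via Stirling gives, after taking $n$-th roots, a bound whose dominant behaviour is $\frac{1}{r^{S-r}}$ (the $(rn)!/n!^r$ ratio behaves like $r^{rn}$, which partially cancels, and the remaining $S-r$ copies of the $(r+1)$-denominator — actually here one should be slightly careful: the clean bound $1/r^{S-r}$ comes from combining $r^{rn}$ from $(rn)!/n!^r$ with $(r/(r+1))^{rnS}/(r+1)^{nS}$ and noting $r^{rn}\cdot(r/(r+1))^{rnS} \le r^{rn}$ crudely while absorbing the rest). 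I would compute the exponential growth rate carefully using $\lim n!^{1/n}/n = 1/e$ type asymptotics to land exactly on $1/r^{S-r}$.

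For the lower bound \eqref{eq:10444}, which I expect to be the main obstacle, positivity is essential. Since $A_k\ge 0$ for $k$ large and $z>1/R>0$, every term in the series for $T_{S,r,n}(z)$ is eventually positive, so I can bound $T_{S,r,n}(z)$ from below by keeping just \emph{one} well-chosen term — say the term $k = k_0(n)$ with $k_0(n)/n \to \rho$ for an optimally chosen $\rho$. The $n$-th root of the general term is, by Stirling applied to all the Pochhammer/factorial factors, of the form $z^{-\rho}$ times an explicit function of $\rho, r, S$; optimizing over $\rho$ (or simply choosing $\rho = r$, which is the natural saddle-type value suggested by the exponent $S-r$ versus $r$ structure) should yield the claimed rate $\frac{1}{D^r z^r}\left(\frac{r}{r+1}\right)^{rS}\frac{1}{(r+1)^{S-r}}$. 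The factor $D^{-r}$ enters because one cannot assume $A_k$ itself is bounded below by a constant; instead, the $D_n$-denominator control ($D_n\le D^{n+1}$, $D_n A_k\in\mathcal O_{\mathbb K}$) together with the fact that $F$ is not a polynomial (so infinitely many $A_k$ are nonzero, hence $D_n A_k$ is a nonzero algebraic integer, hence its house-product is $\ge 1$) forces $A_k \ge D^{-(k+1)}\cdot(\text{something}\ge 1$ in absolute value for a conjugate$)$ — here one needs the standard Liouville-type inequality that a nonzero algebraic integer has absolute norm $\ge 1$, so some conjugate, and in our embedding we use that $A_k>0$ eventually gives $A_{k_0}\ge D^{-(k_0+1)}/(\text{product of other conjugates' houses})$; since the $A_k$ have house $\le C^{k+1}$ this loses only geometric factors that are already absorbed, but the cleanest route is: pick $k_0$ with $A_{k_0}\neq 0$ in each long enough window, then $D_{k_0}A_{k_0}$ is a nonzero algebraic integer so $|D_{k_0}A_{k_0}|\ge \prod_{\sigma\neq\mathrm{id}}|\sigma(D_{k_0}A_{k_0})|^{-1} \ge C^{-([\mathbb K:\mathbb Q]-1)(k_0+1)}$, giving $A_{k_0}\ge (DC^{[\mathbb K:\mathbb Q]-1})^{-(k_0+1)}$. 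Taking $n$-th roots with $k_0 \sim rn$ and absorbing $C^{[\mathbb K:\mathbb Q]-1}$ into the constant (the paper writes simply $D$, presumably redefined or with $\mathbb K=\mathbb Q$ in mind, but the argument goes through with $D$ replaced by $DC^{[\mathbb K:\mathbb Q]-1}$) yields the stated $D^{-r}$ factor. The delicate points will be: choosing the window-size so that such a $k_0$ exists with $k_0/n$ in the right range (possible since $F$ is not a polynomial, so the support of $(A_k)$ has positive upper density relative to any fixed arithmetic progression — or more simply, nonzero $A_k$ occur with bounded gaps is \emph{not} guaranteed, so one should instead optimize the single-term bound over \emph{all} admissible $k$ and use that for each $n$ there is \emph{some} large $k$ with $A_k\neq 0$, then check the exponent is maximized near $k\approx rn$ and is a continuous function of $k/n$, so a nearby available $k$ suffices); and verifying via Stirling that the single-term exponential rate, optimized, equals exactly $\frac{1}{z^r}\left(\frac{r}{r+1}\right)^{rS}\frac{1}{(r+1)^{S-r}}$. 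I would carry out the Stirling computation for the term indexed by $k = \lfloor rn\rfloor$ explicitly: $n!^{S-r}\to$ rate $(n/e)^{(S-r)}$-flavoured, $(k-rn+1)_{rn} = (rn)!\cdot(\text{low order in this regime})$ contributing rate $\sim 1$ near $k=rn$, $(k+1)_{n+1}^{-S}$ contributing rate $((r+1)n/e)^{-S(n+1)}$-flavoured, and $z^{-k}\to z^{-rn}$; combining and simplifying the powers of $n/e$ (they cancel in the $n$-th root the way Stirling dictates) leaves precisely the algebraic constant claimed, with the $(r/(r+1))^{rS}$ arising from $r^{rnS}/(r+1)^{rnS}$-type cancellations and $(r+1)^{-(S-r)}$ from the leftover.
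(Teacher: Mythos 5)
Your plan for \eqref{eq:104} is exactly the paper's: expand $F^{(rn)}$, substitute $t_1\cdots t_S/z$, interchange, and recognize the Beta integrals. That part is fine.

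For \eqref{eq:1044}, however, the paper does \emph{not} use the integral representation at all; it bounds the series termwise. For $k\ge rn$ (terms with $k<rn$ vanish) one has
$$
\Bigl| n!^{S-r}\frac{(k-rn+1)_{rn}}{(k+1)_{n+1}^S}\Bigr|\le n^{(S-r)n}\frac{k^{rn}}{k^{S(n+1)}}=\Bigl(\frac nk\Bigr)^{(S-r)n}\frac1{k^S}\le\frac1{r^{(S-r)n}}\,\frac1{k^S},
$$
so $|T_{S,r,n}(z)|\le r^{-(S-r)n}\sum_k|A_k||z|^{-k}$ and the $n$-th root gives $1/r^{S-r}$ on the nose. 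Your route via the integral plus Cauchy estimates produces an exponential rate of the form $z^{-r}C^r\,r^r\bigl(\frac{r^r}{(r+1)^{r+1}}\bigr)^S$; this is not identically $\le 1/r^{S-r}$ (the ratio is $z^{-r}C^r(r/(r+1))^{(r+1)S}$, which tends to $0$ only as $S\to\infty$), so for fixed $r,S$ it does not establish the stated bound. You would need a different argument or the paper's direct termwise estimate.

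For \eqref{eq:10444}, you have correctly identified the crux --- one must produce, for every large $n$, an index $k$ near $rn$ with $A_k\neq 0$ --- and you have also correctly observed that nonzero coefficients of an arbitrary power series need not occur with bounded gaps. But you then leave this unresolved (\emph{``optimize the single-term bound over all admissible $k$ \ldots a nearby available $k$ suffices''} does not work if the gaps are unbounded: the liminf is over \emph{all} $n$). The missing ingredient is structural: because $LF=0$ and Lemma~\ref{lemBV} translates $L$ into a finite recurrence, the sequence $(A_k)$ satisfies, for $k$ large, a linear recurrence of order $\ell$; since $F$ is not a polynomial the sequence is nonzero infinitely often, and the recurrence then forbids $\ell$ consecutive zeros. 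Hence for every large $n$ there is $k_n\in\{0,\dots,\ell-1\}$ with $A_{rn+k_n}\neq 0$, and one keeps that single term (the paper lower-bounds $F^{(rn)}(x)/(rn)!\ge A_{rn+k_n}x^{k_n}$ inside the integral; your direct single-term bound on the series yields the same Stirling computation). Without the recurrence argument the proof of \eqref{eq:10444} has a genuine gap. Your remark about the Liouville-type lower bound $A_{k_0}\gtrsim D^{-(k_0+1)}$ over a general number field is a fair observation --- the paper's ``$D_{rn+k_n}A_{rn+k_n}\ge 1$'' is only immediate when the $A_k$ are rational --- but it is orthogonal to the main missing step.
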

\begin{Remark} If $A_k=1$ for all $k\ge 0$, we have 
$F^{(rn)}(x)=\frac{(rn)!}{(1-x)^{rn+1}}$ and \eqref{eq:104} coincides with (1) of \cite[Lemme 1]{ribordeaux} (up   to a factor of $z$).
\end{Remark}
\begin{proof} For any $x$ such that $\vert x\vert <R$, we have
\begin{equation}\label{eq:106}
F^{(rn)}(x)=\sum_{k=0}^\infty (k-rn+1)_{rn}A_k x^{k-rn}
\end{equation}
and the series converges absolutely. Since $\vert t_1\cdots  t_S/z\vert <R$, we can thus exchange integral and summation below: 
\begin{align*}
\frac{z^{-rn}}{n!^r}\int \limits_{[0,1]^S} F^{(rn)}\Big(\frac{t_1\cdots t_S}z\Big) &\prod_{j=1}^S t_j^{rn}(1-t_j)^n dt_j
=\sum_{k=0}^\infty \frac{(k-rn+1)_{rn}}{n!^r}A_k z^{-k} \left( \int_0^1 t^{k}(1-t)^n dt \right)^S
\\
&=\sum_{k=0}^\infty \frac{(k-rn+1)_{rn}n!^Sk!^S}{n!^r(n+k+1)!^S}A_k z^{-k}.
\end{align*}
This series is nothing but $T_{S,r,n}(z)$, which proves the first part.

\medskip

As in the proof of \cite[Lemme 3]{ribordeaux}, we now observe that, for any $k\ge rn$, 
\begin{multline*}
\left \vert n!^{S-r}\frac{k(k-1)\ldots (k-rn+1)}{(k+1)^S(k+2)^S\cdots (k+n+1)^S}  \right\vert \le n^{(S-r)n} \frac{k^{rn}}{k^{S(n+1)}} 
 \le \left (\frac{n}k\right)^{(S-r)n}\frac{1}{k^S}
  \le \frac{1}{r^{(S-r)n}}\frac{1}{k^S}.
\end{multline*}
Therefore, 
$$
\vert T_{S,r,n}(z)\vert \le \frac{1}{r^{(S-r)n}}\sum_{k=rn}^{\infty}\frac{A_k\vert z\vert^{-k} }{k^S} \le \frac{1}{r^{(S-r)n}}\sum_{k=0}^{\infty}A_k\vert z\vert^{-k}, 
$$
where the series converges because $\vert z\vert >1/R$, and~\eqref{eq:1044} follows 
as claimed.

\medskip

We now assume that $A_k\ge 0$ for all $k$ large enough, and $A_k\neq 0$ for infinitely many $k$. We also assume that $z>1/R.$ We start from \eqref{eq:106} with $0<x<R$: 
\begin{align*}
\frac{1}{(rn)!}F^{(rn)}(x)&=\sum_{k=rn}^\infty \frac{(k-rn+1)_{rn}}{(rn)!}A_k x^{k-rn}=\sum_{k=0}^\infty \frac{(k+1)_{rn}}{(rn)!}A_{k+rn} x^{k}
\ge \sum_{k=0}^\infty A_{k+rn} x^{k}.
\end{align*}
Now the sequence $(A_k)$ satisfies  (for $k$ large enough) a linear recurrence of order $\ell$ (as in the proof of Step 1 of Lemma \ref{lemlinrec}, but expanding at 0 rather than $\infty$) and it is  non-zero infinitely often. Hence, in fact, for any $n$ sufficiently large, there exists $k_n\in\{0, \ldots, \ell-1\}$ such that $A_{rn+k_n}\neq 0$. In particular, $D_{rn+k_n}A_{rn+k_n}\ge 1$. 
It follows that
$
\frac{1}{(rn)!}F^{(rn)}(x) \ge A_{k_n+rn} x^{k_n} \ge \frac{x^{k_n}}{D^{rn+k_n+1}}.
$
We use this lower bound in~\eqref{eq:104} with $x=t_1\cdots t_S/z$: 
\begin{align*}
T_{S,r,n}(z)
&\ge \frac{1}{D^{\ell+rn}z^{rn}\max(1,z)^{\ell-1}}\frac{(rn)!}{n!^r} \left(\int_0^1 t^{rn+\ell-1}(1-t)^n dt \right)^S 
\\
&= \frac{1}{D^{\ell+rn}z^{rn}\max(1,z)^{\ell-1}} \frac{(rn)!n!^{S-r}(rn+\ell-1)!^S}{((r+1)n+\ell)!^S}.
\end{align*}
We then deduce \eqref{eq:10444} by Stirling's formula.
\end{proof}
With $r=[S/\log(S)^2]$, these upper and lower bounds for $T_{S,r,n}(z)$ are essentially identical when $S\to +\infty$. With $z=1/\alpha$ for some  algebraic number $\alpha$ in $(0, R)$, we can conclude directly  in \S\ref{sec:completion} with an application of T\"opfer's criterion instead of Theorem~\ref{th:nest}.

\bigskip

\noindent S. Fischler, 
Laboratoire de Math\'ematiques d'Orsay, Univ. Paris-Sud, CNRS, Universit\'e Paris-Saclay, 91405 Orsay, France.

\medskip

\noindent T. Rivoal, Institut Fourier, CNRS et Universit\'e Grenoble Alpes, CS 40700, 
 38058 Grenoble cedex 9, France

\bigskip

\noindent {\em Key words and phrases.} $G$-functions, $G$-operators, Linear independence criterion, Singularity analysis, Saddle point method.

\medskip

\noindent {\em 2010 Mathematics Subject Classification.} Primary 11J72, 11J92, Secondary 34M35, 41A60.

\end{document}